\documentclass[11pt,reqno,a4paper]{amsart}

\usepackage{amsmath,amssymb,amsfonts}
\usepackage{tikz}
\usepackage{hyperref}
\usepackage{appendix}
\usepackage{scalerel}
\usepackage{graphicx}
\usepackage{subcaption}
\captionsetup[subfigure]{labelfont=rm}
\usepackage{amsthm}
\usepackage{multirow}
\usepackage{pdflscape}
\usepackage{afterpage}
\usepackage{capt-of} 
\usepackage{lipsum}
\usepackage{booktabs}
\usepackage{siunitx}
\usepackage{esvect}
\usepackage{textcomp}
\usepackage{bbm}





\makeatletter
\def\set@curr@file#1{%
  \begingroup
    \escapechar\m@ne
    \xdef\@curr@file{\expandafter\string\csname #1\endcsname}%
  \endgroup
}
\def\quote@name#1{"\quote@@name#1\@gobble""}
\def\quote@@name#1"{#1\quote@@name}
\def\unquote@name#1{\quote@@name#1\@gobble"}
\makeatother
\usepackage{graphics}




\theoremstyle{plain}
\newtheorem{thm}{Theorem}
\newtheorem{lem}[thm]{Lemma}
\newtheorem{prop}[thm]{Proposition}
\newtheorem{cor}[thm]{Corollary}

\theoremstyle{definition}
\newtheorem{defn}{Definition}[section]

\theoremstyle{remark}
\newtheorem*{rem}{Remark}

\theoremstyle{plain}
\newtheorem{fact}{Fact}

\newcommand{\Rr}{\mathbb{R}}

\newcommand{\NN}{{\mathbb{N}}}

\newcommand{\RR}{\mathbb{R}}

\newcommand{\inter}{{\rm int}}

\newcommand{\RN}[1]{%
  \textup{\uppercase\expandafter{\romannumeral#1}}%
}



\newcommand{\skPoin}[2]{\pi_{#2}}
\newcommand{\skDomPoin}[2]{\Pi_{#2}}

\newcommand{\skProjDom}[2]{\Delta^{#1}_{#2}}
\newcommand{\skProjPoin}[2]{\hat{\pi}_{#2}}


\author[Telmo Peixe]{Telmo Peixe}
\address{\small{ISEG-Lisbon School of Economics \& Management, Universidade de Lisboa, REM-Research in Economics and Mathematics, CEMAPRE-Centro de Matem\'atica Aplicada \`a Previs\~ao e Decis\~ao Econ\'omica.}}

\author[Alexandre A. Rodrigues]{Alexandre A. Rodrigues}
\address{\small{Centro de Matem\'atica and Faculdade de Ci\^encias, Universidade do Porto.}}

\email{telmop@iseg.ulisboa.pt, alexandre.rodrigues@fc.up.pt}

\begin{document}

\subjclass[2010]{34C37, 34A34, 37C75, 92D25, 91A22}
\keywords{Asymptotic stability, polymatrix replicator, heteroclinic network, heteroclinic cycle, projective map.}

\title[Stability of heteroclinic cycles: a new approach ]
{Stability of heteroclinic cycles: \\ a new approach }

\date{\today}

 \begin{abstract}
 
This paper analyses the stability of cycles within a heteroclinic network  lying in a three-dimensional manifold formed by six  cycles, for a one-parameter model developed in the context of game theory.
We show the asymptotic stability of the network for a range of parameter values  compatible with the existence of an interior equilibrium and we describe an asymptotic technique to decide which cycle (within the network) is visible in numerics. The technique consists of  reducing the relevant dynamics to a suitable one-dimensional map, the so called \emph{projective map}. 
Stability of the fixed points of the projective map determines the stability of the associated cycles.  
The description of this new asymptotic approach is  applicable to more general types of networks and is potentially useful in computacional dynamics. 
 \end{abstract}

\maketitle


\section{Introduction}\label{sec:intro}
 
Recent studies in several areas have emphasized ways in which heteroclinic cycles and networks   may be responsible for \emph{intermittent dynamics} in nonlinear systems. They may be seen as the skeleton for the understanding of complicated dynamics. 
In this article, a \emph{heteroclinic cycle} is the union of hyperbolic equilibria  and solutions that connect them in a cyclic fashion \cite{field1991stationary,podvigina2011local, krupa1995asymptotic, rodrigues2013persistent}. A \emph{heteroclinic network} is a connected union of heteroclinic cycles (possibly infinite in number), such that for any pair of nodes in the network, there is a sequence of heteroclinic connections connecting them.

 Heteroclinic cycles or networks do not exist in a generic dynamical system, because small perturbations break connections between saddles. However, they may exist in systems where some constraints are imposed and are robust with respect to perturbations that respect these restrictions. Typically, these constraints create flow-invariant subspaces where the connection is of \emph{saddle-sink type} \cite{krupa1995asymptotic, field2020lectures}.

 In Lotka-Volterra modelled by systems in $(\RR_0^+)^n$, $n\in \NN$, the cartesian hyperplanes, also called by ``\emph{extinction subspaces}'', are flow-invariant. Similarly, such hyperplanes are invariant subspaces for systems on a simplex, a usual state space in \emph{Evolutionary Game Theory} (EGT) \cite{hofbauer1987permanence, hofbauer1998evolutionary, gaunersdorfer1995fictitious}. These conditions prompts the occurrence of heteroclinic networks associated to hyperbolic equilibria.

When a network is \emph{asymptotically stable}, the transition times between saddles increase geometrically   \cite{gaunersdorfer1995fictitious, labouriau2017takens}. 
Within a heteroclinic network,  no individual heteroclinic cycle can be asymptotically stable. However, the cycles can exhibit intermediate levels of stability, namely \emph{essential}  and \emph{fragmentary} asymptotic stability, important to decide the visibility of cycles in numerical simulations  \cite{podvigina2015simple, podvigina2020asymptotic, melbourne1991example}.
Useful conditions for asymptotic stability of some types of heteroclinic cycles have been established by \cite{podvigina2015simple, podvigina2020asymptotic, podvigina2012stability}.   

A classification of the complex networks as \emph{simple}, \emph{pseudo-simple} and \emph{quasi-simple} (among others) has been proposed by several authors, namely Krupa and Melbourne \cite{krupa1995asymptotic}, Podvigina and Chossat \cite{podvigina2017asymptotic}, Garrido-da-Silva and Castro \cite{garrido2019stability},   Podvigina \emph{et al} \cite{podvigina2020asymptotic}.
 A fruitful tool for quantifying stability of heteroclinic cycles is the \emph{local stability index} of  Podvigina and Ashwin  \cite{podvigina2011local} and Lohse \cite{lohse2015unstable}.

Given a heteroclinic cycle, the derivation of conditions for its stability involves the construction of an appropriate first return map, which typically is a highly non-trivial problem. The existence of various itineraries along a network that can be followed by nearby trajectories makes the study of the stability of networks a hard problem. This is why there are just a few instances of networks whose asymptotic stability was proven.

In this paper, we describe a method to study the heteroclinic dynamics of a differential equation arising in the context of a \emph{polymatrix game}.

We consider a one-parameter family of Ordinary Differential Equations (ODE) modelling the dynamics of a population divided in \emph{three} groups, each one with \emph{two} possible competitive strategies.
Interactions between individuals of any two groups are allowed, including the same group. The differential equation associated to a polymatrix game, that we designate as \emph{polymatrix replicator}, is defined in  a product of three simplices. Examples of such dynamical systems arise naturally in the context of \emph{Evolutionary Game Theory} (EGT) developed by   \cite{smith1973logic, peixe2021persistent} (see also references therein).

\subsection*{Novelty}

By making use of the theory developed in  \cite{peixe2021persistent, alishah2019asymptotic}, we start by showing the asymptotic stability of a network (containing six cycles) for a one-parameter family of autonomous differential equations, where the parameter lies in a interval compatible with the existence of an  \emph{interior equilibrium}.  

By computing periodic points of a one-dimensional map (\emph{projective map}), for parameter values ensuring the asymptotic stability of the network, we show that, if one of the cycles is attracting, then the others  are completely unstable. We also show the cycle where   a manifold containing the two-dimensional invariant manifold of the interior equilibrium, accumulates.

Our techniques are computationally applicable not only to networks in the EGT context (Lotka-Volterra systems), but to more general cases. 

We consider a quasi-change of coordinates  (near the network) to compute the preferred attracting cycle of the network. The basin of attraction of each cycle defines a sector in the dual set, whose asymptotic dynamics may be analysed through a piecewise smooth one-dimensional map on an interval -- \emph{the projective map}. Using the classical Perron-Frobenius Theory applied to linear operators, we conclude about the existence of a \emph{bijection} between stable periodic points of the projective map and stable heteroclinic cycles.

\subsection*{Structure}
This article is organised as follows.
 In Section \ref{sec:model} we introduce the one-parameter family of polymatrix replicators that will be interested in.
Once we have defined the main concepts used throughout the article, in Section \ref{sec:bif_analysis} we concentrate our analysis on  a parameter interval  where a single interior equilibrium exists, and we completely describe the dynamics on the boundary of the phase space, relating it with other equilibria on the cube's boundary. In particular, we describe the attracting heteroclinic network $\Sigma$ formed by the edges and vertices of the cube.

We present in Section~\ref{sec:asym_dyn} a piecewise linear model from where we  analyse the asymptotic dynamics  near the  network $\mathcal{H}$ introduced in Section \ref{sec:bif_analysis}.
In Section \ref{sec:analysis_on_dual}, we apply the previously established theory  to study the stability of all cycles in $\mathcal{H}$.
Our  method  is algorithmic and in Section \ref{sec:software_code} we address the reader to the \emph{Mathematica} code we developed to study polymatrix replicators.
Finally, in Section \ref{sec:discussion} we relate our main results with others in the literature.
We have endeavoured to make a self contained exposition bringing together all topics related to the method and the proofs. 
 
 In Appendices \ref{Appendix} and \ref{AppendixB}, we add some tables that will help the reader to understand our article, as well as the notation for constants and auxiliary functions.

\section{Model}\label{sec:model}
We analyse a particular case of a \emph{polymatrix game} whose phase space may be identified with a cube in $\Rr^3$.
Consider a population divided in \textbf{three} groups where individuals of each group have  \textbf{two} strategies to interact with other members of the population.
The model that we will consider to study the time evolution of the chosen strategies is the \emph{polymatrix game} and may be formalised as:

\begin{equation}\label{eq:poly_rep_1}
\dot{x}_i^\alpha(t) = x_i^\alpha (t) \left( (P x(t))_i^\alpha-\sum_{j=1}^2 (x_j^{\alpha}(t)) (P x(t))_j^\alpha \right), \alpha\in\{1,2,3\}, 
i\in\{1,2\},
\end{equation}
where $\dot{x}_i^\alpha(t)$ represents the time derivative of $x_i^\alpha(t)$, $P\in M_{6\times 6}(\Rr)$ is the payoff matrix,
$$
x(t)=\left(x_1^1(t),x_2^1(t),x_1^2(t),x_2^2(t),x_1^3(t),x_2^3(t)\right)
$$
and
$$
x_1^1(t)+x_2^1(t)=x_1^2(t)+x_2^2(t)=x_1^3(t)+x_2^3(t)=1.
$$
The indices may be interpreted as:
 \begin{eqnarray*}
\alpha &\mapsto & \text{subgroup of the population}; \\ 
i &\mapsto & \text{strategy of the associated subgroup.} 
 \end{eqnarray*}

For simplicity of notation, we  will write $x$ instead of $x(t)$.
The \emph{payoff matrix} $P$ can be represented as a matrix,  
$$
P=\left(
\begin{array}{c|c|c}
P^{1,1}  & P^{1,2} & P^{1,3} \\
\hline\\[-4mm]
P^{2,1}  & P^{2,2} & P^{2,3} \\
\hline\\[-4mm]
P^{3,1}  & P^{3,2} & P^{3,3}
\end{array}
\right) = \left(
\begin{array}{cc|cc|cc}
p_{1,1}^{1,1}  & p_{1,2}^{1,1} & p_{1,1}^{1,2}  & p_{1,2}^{1,2} & p_{1,1}^{1,3}  & p_{1,2}^{1,3} \\[2mm]
p_{2,1}^{1,1}  & p_{2,2}^{1,1} & p_{2,1}^{1,2}  & p_{2,2}^{1,2} & p_{2,1}^{1,3}  & p_{2,2}^{1,3} \\[1mm]
\hline\\[-3mm]
p_{1,1}^{2,1}  & p_{1,2}^{2,1} & p_{1,1}^{2,2}  & p_{1,2}^{2,2} & p_{1,1}^{2,3}  & p_{1,2}^{2,3} \\[2mm]
p_{2,1}^{2,1}  & p_{2,2}^{2,1} & p_{2,1}^{2,2}  & p_{2,2}^{2,2} & p_{2,1}^{2,3}  & p_{2,2}^{2,3} \\[1mm]
\hline\\[-3mm]
p_{1,1}^{3,1}  & p_{1,2}^{3,1} & p_{1,1}^{3,2}  & p_{1,2}^{3,2} & p_{1,1}^{3,3}  & p_{1,2}^{3,3} \\[2mm]
p_{2,1}^{3,1}  & p_{2,2}^{3,1} & p_{2,1}^{3,2}  & p_{2,2}^{3,2} & p_{2,1}^{3,3}  & p_{2,2}^{3,3}
\end{array}
\right)\,,
$$
where each block $P^{\alpha, \beta}$, $\alpha, \beta \in \{1,2,3\}$, represents the payoff of the individuals of the group $\alpha$ when interacting with individuals of the group $\beta$, and where each entry $p_{i,j}^{\alpha, \beta}$ represents the average payoff of an individual of the group $\alpha$ using strategy $i$ when interacting with an individual of the group $\beta$ using strategy $j$.

System \eqref{eq:poly_rep_1} is designated as a \textit{polymatrix replicator} \cite{peixe2021persistent, alishah2015hamiltonian,alishah2015conservative,peixe2019permanence}.
Assuming random encounters between individuals of the population, for each group $\alpha\in\{1,2,3\}$,
the average payoff for a strategy $i\in\{1,2\}$, is given by
$$
(Px)_i^\alpha=\sum_{\beta=1}^3 \left( P^{\alpha,\beta} \right)_i^\alpha x^\beta
= \sum_{\beta=1}^3 \sum_{k=1}^2 p_{i,k}^{\alpha,\beta}x_k^\beta\,,
$$
the average payoff of all strategies in $\alpha$ is given by
$$
\sum_{i=1}^2 x_i^{\alpha} \left( Px \right)_i^\alpha = \sum_{\beta=1}^3 (x^{\alpha})^T P^{\alpha,\beta} x^{\beta}\,,
$$
and the growth rate $ \dfrac{\dot{x}_i^{\alpha}}{x_i^{\alpha}}$ of the frequency of each strategy $i\in\{1,2\}$
is equal
to the payoff difference
$$ (Px)_i^\alpha - \sum_{\beta=1}^3 (x^{\alpha})^T P^{\alpha,\beta} x^{\beta}. $$
If $x=(x_1,x_2,x_3,x_4,x_5,x_6)$ is such that
\begin{equation}\label{sum1}
x_1+x_2=x_3+x_4=x_5+x_6=1,
\end{equation}
the system~\eqref{eq:poly_rep_1} may be written as
\begin{equation}\label{eq:poly_rep_2}
\left \{ \begin{array}{l}
\dot{x}_i=x_i\left((P x)_i-x_i(P x)_i-x_{i+1}(P x)_{i+1}\right) \\[2mm]
\dot{x}_{i+1}=x_{i+1}\left((P x)_{i+1}-x_i(P x)_i-x_{i+1}(Px)_{i+1}\right)
\end{array} \right. , \quad i\in\{1,3,5\}.
\end{equation}
By Lemma 1 of \cite{peixe2021persistent}, system~\eqref{eq:poly_rep_2} is equivalent to
\begin{equation} \label{eq:poly_rep_3}
\left \{ \begin{array}{l}
\dot{x}_1=x_1(1-x_1)\left((P x)_1-(Px)_2\right) \\[2mm]
\dot{x}_3=x_3(1-x_3)\left((P x)_3-(Px)_4\right) \\[2mm]
\dot{x}_5=x_5(1-x_5)\left((P x)_5-(Px)_6\right) 
\end{array} \right. ,
\end{equation}
where $\dot{x}_2=-\dot{x}_1$, $\dot{x}_4=-\dot{x}_3$, and $\dot{x}_6=-\dot{x}_5$.
Its phase space is
$$
\Gamma_{(2,2,2)} :=\Delta^1\times \Delta^1 \times \Delta^1 \subset\Rr^6,
$$
 a three-dimensional submanifold of $\RR^6$, where
$$
\Delta^1 = \{ (x_i,x_{i+1})\in\Rr^2 \,|\, x_i+x_{i+1}=1,\, x_i,x_{i+1}\geq 0 \}, \qquad  i\in\{1,3,5\}.
$$

Fixing a referential on $\Rr^3$, by~\eqref{sum1} we define a bijection between $ \Gamma_{(2,2,2)} \subset\Rr^6$ and $[0,1]^3\subset \Rr^3$.
 In Table~\ref{tbl:Representation_of_vs} (left) we associate each vertex of the cube $[0,1]^3$ with a vertex on $\Gamma_{(2,2,2)}$, where $(1,0,1,0,1,0)\in\Gamma_{(2,2,2)}$ and $(0,0,0)\in [0,1]^3$ are identified.

\begin{center}
\footnotesize{
\begin{tabular}{|c|c|c|} \toprule
Vertex  									&  $\Rr^3$ 					 	&  $\Rr^6$  		\\ \bottomrule \toprule
$\quad v_1\quad$ 	& $\quad (0,0,0)\quad$ 	& $\quad (1,0,1,0,1,0)\quad$	\\ \midrule
$v_2$ 					& $(0,0,1)$ 						& $(1,0,1,0,0,1)$  				\\ \midrule
$v_3$ 					& $(0,1,0)$ 						& $(1,0,0,1,1,0)$  				\\ \midrule
$v_4$ 					& $(0,1,1)$ 						& $(1,0,0,1,0,1)$  				\\ \midrule
$v_5$ 					& $(1,0,0)$ 						& $(0,1,1,0,1,0)$  				\\ \midrule
$v_6$ 					& $(1,0,1)$ 						& $(0,1,1,0,0,1)$  				\\ \midrule
$v_7$ 					& $(1,1,0)$ 						& $(0,1,0,1,1,0)$  				\\ \midrule
$v_8$ 					& $(1,1,1)$ 						& $(0,1,0,1,0,1)$  				\\ \bottomrule
\end{tabular}
\quad 
\begin{tabular}{|c|c|} \toprule
Face									& Vertices					 	   	\\ \bottomrule \toprule
$\quad  \sigma_1 \quad$ 	& $\{ v_5, v_6, v_7, v_8 \}$	\\ \midrule
$\quad  \sigma_2 \quad$ 	& $\{ v_1, v_2, v_3, v_4 \}$	\\ \midrule
$\quad  \sigma_3 \quad$ 	& $\{ v_3, v_4, v_7, v_8 \}$	\\ \midrule
$\quad  \sigma_4 \quad$ 	& $\{ v_1, v_2, v_5, v_6 \}$	\\ \midrule
$\quad  \sigma_5 \quad$  	& $\{ v_2, v_4, v_6, v_8 \}$	\\ \midrule
$\quad  \sigma_6\quad$ 	& $\{ v_1, v_3, v_5, v_7 \}$ \\ \bottomrule
\end{tabular}
\vspace{.2cm}
\captionof{table}{\small{Representation of the eight vertices of $[0,1]^3$ in $\Rr^3$ and  $\Gamma_{(2,2,2)}$ in $\Rr^6$, and the identification of the six faces according to vertices they contain.}}
        \label{tbl:Representation_of_vs}
}
\end{center}

Given the polymatrix replicator~\eqref{eq:poly_rep_1}, by~\cite[Proposition 1]{alishah2015conservative}, we may obtain an equivalent game with another payoff matrix whose second row of each group has $0$'s in all of its entries.
From now on, we will consider system~\eqref{eq:poly_rep_3} with payoff matrix
$$
P_{\boldsymbol{\mu}}=\left(
\begin{array}{cccccc}
102  &  \boldsymbol{\mu} & 0 & -158 & -18 & -9 \\
 0 & 0 & 0 & 0 & 0 & 0 \\
-51 & 51 & 0 & 0 & -9 & 18 \\
 0 & 0 & 0 & 0 & 0 & 0 \\
-102 & -153 & 237 & 0 & 27 & 9 \\
 0 & 0 & 0 & 0 & 0 & 0 \\
\end{array}
\right)\,.
$$
It defines a polynomial vector field on the compact flow-invariant set $\Gamma_{(2,2,2)}$ (for system~\eqref{eq:poly_rep_3}).
By compactness of $\Gamma_{(2,2,2)}$, the flow associated to system~\eqref{eq:poly_rep_3} is \emph{complete}, i.e. all solutions are defined for all $t\in \RR$.

From now on, let $\left( (2,2,2), P_{\boldsymbol{\mu}} \right)$ be the \emph{polymatrix game} associated to~\eqref{eq:poly_rep_3}. For $P=P_{\boldsymbol{\mu}}$, system~\eqref{eq:poly_rep_3} becomes
\begin{equation}\label{eq:poly_rep_4}
\left \{ \begin{array}{l}
\dot{x_1}=x_1(1-x_1)(P_{\boldsymbol{\mu}}\, x)_1 \\[2mm]
\dot{x_3}=x_3(1-x_3)(P_{\boldsymbol{\mu}}\, x)_3 \\[2mm]
\dot{x_5}=x_5(1-x_5)(P_{\boldsymbol{\mu}}\, x)_5 \\
\end{array} \right. .
\end{equation}
\label{systems are equivalent}
Considering $x=x_2$, $y=x_4$, $z=x_6$ and using~\eqref{sum1}, equation \eqref{eq:poly_rep_4} is equivalent to the following equation defined on the cube $[0,1]^3$: 
\begin{equation}\label{eq:poly_rep_5}
\left \{ \begin{array}{l}
\dot{x}=x(1-x)\left(-84+(102-\boldsymbol{\mu}) x +158 y - 9 z \right) \\[2mm]
\dot{y}=y(1-y)\left( 60 -102 x - 27 z \right) \\[2mm]
\dot{z}=z(1-z)\left( -162 + 51 x + 237 y + 18 z \right)
\end{array} \right. . \\
\end{equation}

Vertices, edges and faces of the cube  are flow-invariant. 
In order to lighten the notation, when there is no risk of misunderstanding, the one-parameter vector fields associated to \eqref{eq:poly_rep_4} and \eqref{eq:poly_rep_5} will be denoted by $f_{\boldsymbol{\mu}}$ and its flow by $\varphi(t,u_0)$, $t\in \RR_0^+$, $u_0\in \Gamma_{(2,2,2)}$ (for \eqref{eq:poly_rep_4}) and $u_0\in [0,1]^3$ (for \eqref{eq:poly_rep_5}). When there is no risk of misunderstanding, we  omit the dependence on $\boldsymbol{\mu}$.

\begin{rem}
As performed in \cite{peixe2021persistent}, in the transition from \eqref{eq:poly_rep_4} to \eqref{eq:poly_rep_5},
we have identified the point $(1,0,1,0,1,0)\in\Gamma_{(2,2,2)}$, associated to a \emph{pure strategy} in the original polymatrix replicator, with   $(0,0,0)\in \Rr^3$ (cf. Table \ref{tbl:Representation_of_vs}).
\end{rem}

\subsection*{Notation}
The following terminology will be used throughout the manuscript:
\begin{eqnarray*}
\mathcal{V} &\mapsto&  \{v_1, ..., v_8\};\\
 \mathcal{F} &\mapsto & \text{set of all faces of the cube } [0,1]^3; \\
  \mathcal{F}_v&\mapsto & \text{set of faces $\sigma_j$, for which the component $x_j$ of $v$ are zero}, v\in \mathcal{V}.  
 \end{eqnarray*}

\section{Preliminaries}\label{sec:prelimin}
In this section we define the main concepts used throughout the article.
For $n\in \NN$, we are considering the Banach space $\RR^n$ endowed with the usual norm $\| \star\|$ and the usual euclidian metric \emph{dist}.
 The symbol $\ell$ denotes the Lebesgue measure of a Borel subset of $\RR^n$.

\subsection{Admissible path and heteroclinic cycle}
For $n\in\NN$, we consider  a smooth one-parameter family of vector fields $f_{\boldsymbol{\mu}}$ on $\RR^{n}$,
with flow given by the unique solution  $u(t)=\varphi(t, u_{0})$  of
\begin{equation}
\dot{u}=f_{\boldsymbol{\mu}}(u),\qquad \varphi(0, u_0)=u_{0}, 
\label{sistema geral}
\end{equation}
where $\dot{u}=\frac{du}{dt}$, $u_{0} \in \RR^{n},$ $t\in \RR$, and $\boldsymbol{\mu}$ is a real parameter. 
If $A\subseteq \RR^n$, we denote by $\inter\left(A\right)$, $\overline{A}$ and $\partial A$
the topological \emph{interior},  \emph{closure} and \emph{boundary} of $A$, respectively.

\subsubsection{$\alpha$ and $\omega$-limit set}
For a solution of~\eqref{sistema geral} passing through $u_0\in \RR^n$, the set of its accumulation points as $t$ goes to $+\infty$ is the $\omega$-limit set of $u_0$ and will be denoted by $\omega(u_0)$. More formally, 
$$
\omega(u_0)=\bigcap_{T=0}^{+\infty} \overline{\left(\bigcup_{t>T}\varphi (t, u_0)\right)}.
$$ 
The set $\omega(u_0)$  is closed and flow-invariant, and if the $\varphi$-trajectory of $u_0$ is contained in a compact set, then 
$\omega(u_0)$ is non-empty \cite{guckenheimer2013nonlinear}. If $Y\subset \RR^n$, we define $\omega(Y)$ as the union of all $\omega$-limits of $y\in Y$.
 We define analogously,  the $\alpha$-limit set by reversing the evolution of  $t$.

\subsubsection{Heteroclinic cycles}
We introduce the concept of heteroclinic connection, heteroclinic path, heteroclinic cycle and network associated to a finite set of hyperbolic equilibria.  We address the reader to Field  \cite{field2020lectures} for more information on the subject.
 \begin{defn} 
For $m\in\NN$, given two hyperbolic equilibria of saddle-type
$A$ and $B$ associated to the flow of~\eqref{sistema geral}, an $m$-dimensional \emph{heteroclinic connection } from $A$ to $B$, denoted $[A\rightarrow B]$, is an $m$-dimensional connected and flow-invariant manifold contained in $W^{u}(A)\cap W^{s}(B)$. 
 \end{defn}

 \begin{defn} 
For $k\in \NN$, given a sequence of one-dimensional heteroclinic connections  $\{\gamma_0, ..., \gamma_k\}$ for ~\eqref{sistema geral}, we say that it is an \emph{admissible path} if for all $j\in \{0,1, ..., k-1\}$, we have $\omega(\gamma_j)=\alpha(\gamma_{j+1})$. If $\omega(\gamma_k)= \alpha(\gamma_0)$, this sequence is called a \textit{heteroclinic cycle}.
A \textit{heteroclinic network} is a connected union of heteroclinic cycles.
 \end{defn}
When there is no risk of misunderstanding, we represent the cycles and networks by the \emph{ordered set} of their associated saddles as in Definitions 6.7 and 6.22 of  \cite{field2020lectures}. In general, heteroclinic networks are represented by \emph{directed graphs} where the vertices represent the equilibria and the oriented edges represent heteroclinic connections.

\subsection{Stability}
We recall the following stability  definitions that can be found in  \cite{podvigina2020asymptotic, podvigina2019stability}.
 In what follows $X,Y \subset \RR^n$   are compact flow-invariant sets for the system \eqref{sistema geral}.

\begin{defn} 
\label{def: 1}
\begin{enumerate}
\item The set $X$ is \emph{Lyapunov stable} if for any neighbournood $U$ of $X$, there exists a neighbourhood $V$ of $X$ such that
$$
\forall x\in V,\quad  \forall t\in \RR^+, \qquad \varphi(t, x)\in U.
$$

\item The set $X$ is \emph{asymptotically stable} if it is Lyapunov stable and in addition the neighbourhood $V$ can be chosen such that:
$$
\forall x\in V, \quad \lim_{t \rightarrow +\infty} dist(\varphi(t, x), X)=0.
$$

\item The set $X$ is \emph{globally asymptotically stable in Y} if it attracts all trajectories starting at $Y$.\\
\item The set $X$ is \emph{unstable} if it is not Lyapunov stable.
\end{enumerate}

\end{defn}

 A heteroclinic cycle that belongs to a network (not reduced to a single cycle) cannot be asymptotically stable because it does not contain the entire unstable manifolds of all its equilibria (according to \cite{podvigina2020asymptotic}, it is not \emph{clean}).  Various intermediate notions of stability have been introduced over the last decades -- we address the reader to \cite{podvigina2020asymptotic, podvigina2019stability}\footnote{There is an abundance of references in the literature. We choose to mention only two, based on our personal preferences.  The reader interested in further detail may use the references within those we mention.} for a nice description of these different levels of stability.  
\subsection{Likely limit-set}
We now introduce two concepts respecting system~\eqref{sistema geral}, that will used throughout the article.

\begin{defn}
If $X$ is a compact invariant subset of $\RR^n$, the basin of attraction of $X$, denoted by $ \mathcal{B}(X) $, is the set $$ \{x \in \RR^n:\, \,  \omega(x)\subset X\}.$$
\end{defn}
 
\begin{defn}[\cite{milnor1985concept}]
If  $Y \subset \RR^n$ is a measurable forward invariant set with $\ell(Y) >0$, the \emph{likely limit set}  of $Y$, denoted by $\mathcal{L}(Y)$,  is the smallest closed  invariant subset of $Y$ that contains all $\omega$-limit sets except for a subset of $Y$ of zero Lebesgue measure. 
\end{defn}
When we restrict the flow to a compact set, $\mathcal{L}(Y)$ is non-empty, compact and forward invariant \cite{milnor1985concept}.

\begin{figure}[h]
  	\includegraphics[width=12.1cm]{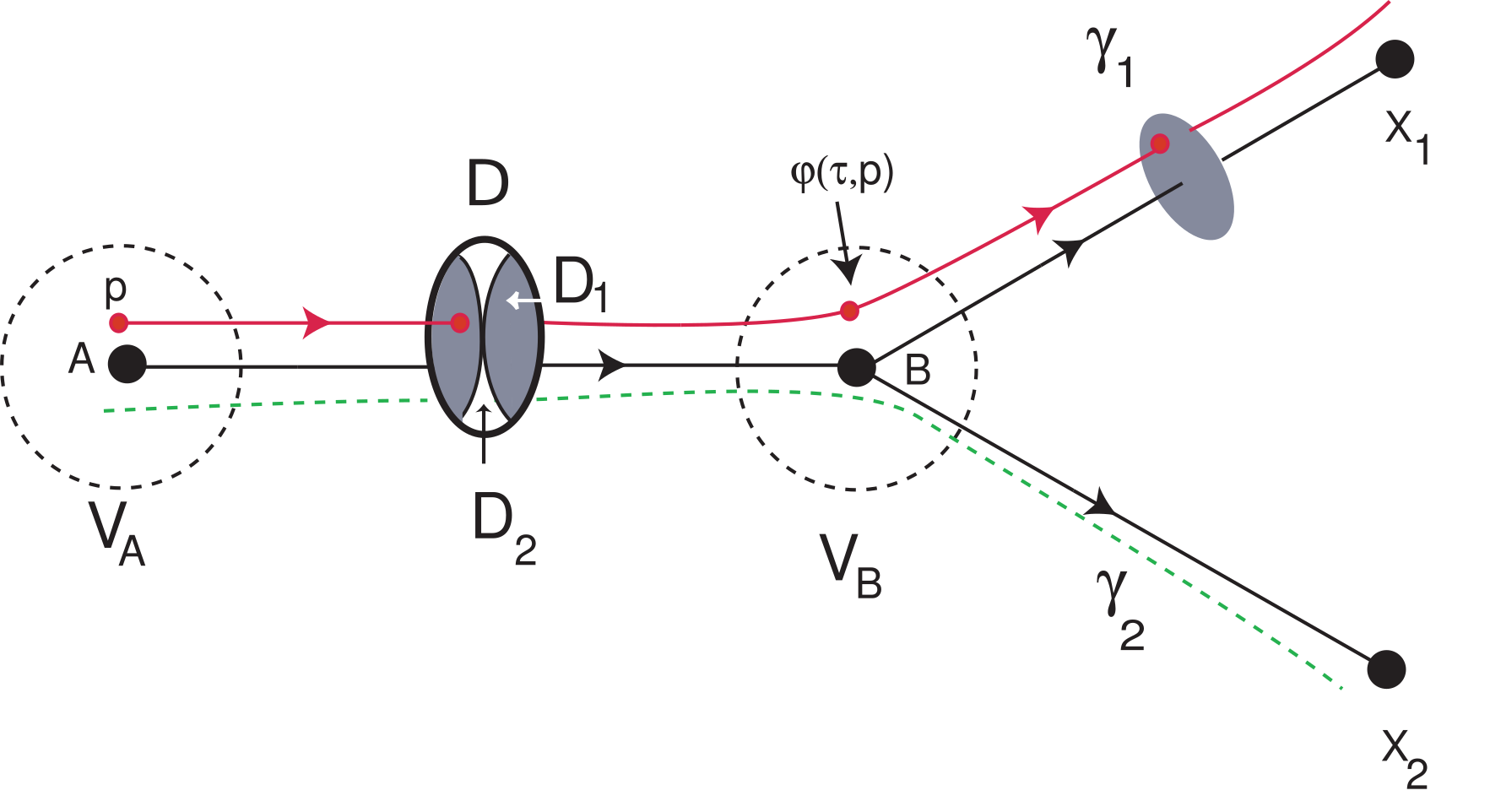}
	     \caption{\footnotesize{ Illustration of a switching node (B): for $i\in\{1,2\}$, there are initial conditions in $D_i$ whose trajectories follow $\gamma_i$.}}\label{fig:switching1}
\end{figure}

\subsection{Switching node}
The next definition is adapted from \cite{rodrigues2013persistent, castro2010heteroclinic}.
Let $A, B, X_1$ and $X_2$ be four saddle equilibria of \eqref{sistema geral}. Given a neighbourhood $V_A, V_B$ of $A$ and $ B$, respectively,  we say: \\

\begin{enumerate}

\item  there is \emph{switching at the node $B$} (or  $B$ is a \emph{switching node}) if given a neighbourhood $V_{B}$ of $B$, for any $\varepsilon>0 $, and for any $(n-1)$-dimensional disk $D$ that meets the connection $\gamma:=[A \rightarrow B]$ transversely, there are points in $D$ that follow each of the connections $\gamma_1:=[B \rightarrow X_1]$ and $\gamma_2:=[B \rightarrow X_2]$  at a distance $\varepsilon $ (Figure \ref{fig:switching1}).\\

\item  a point $p$ \emph{follows the connection} $[A\rightarrow B]$  at a  distance $\varepsilon>0$ if there is a $\tau>0$ such that $\varphi(0, p) \equiv p \in V_A$,  $\varphi(\tau, p)\in V_B$,   and such that for all $t \in [0,\tau]$ the trajectory $\varphi(t,p)$ lies at a distance less than $\varepsilon$ from the connection $\gamma:=[A\rightarrow B]$ (Figure \ref{fig:switching1}). \\
 
\item a point $p\in V_A$ \emph{follows the admissible path} $\{\gamma_0, ..., \gamma_k\}$, $k\in \NN$, with distance $\varepsilon>0$ if there exist $q\in \RR^n$ and two monotonically increasing sequences of times $(t_i)_{i \in \{0,1, ..., k+1\}}$ and $(s_i)_{i \in \{0,1, ..., k\}}$ such that for all $i \in \{0, ..., k\}$ we have $t_i<s_i<t_{i+1}$ and
\begin{itemize}
\item $\varphi(t, p)$ lies in a $\varepsilon$-tubular neighbourhood of $\{\gamma_0, ..., \gamma_k\}$ for all $t\in[t_i, t_{i+1}]$;
\item $\varphi(t_i, q) \in N_{\alpha(\gamma_i)}$ and $\varphi(s_i, q)$ lies in a $\varepsilon$--tubular neighbourhood of $\gamma_i$ disjoint from $N_{\alpha(\gamma_i)}$ and $N_{\omega(\gamma_i)}$;
\item for all $t \in [s_i, s_{i+1}]$, the trajectory $\varphi(t, p)$ does not visit  the neighbourhood of any other saddle except that of $\omega(\gamma_{i})$.\\
\end{itemize}
\end{enumerate}

Under the previous hypotheses, if $B$ is a switching node we may define $D_1, D_2\subset D$ such that initial conditions within $D_1, D_2$ follow the connections $\gamma_1= [B\to X_1]$ and $\gamma_2=[B\to X_2]$, respectively (Figure \ref{fig:switching1}).

\section{Bifurcation analysis}\label{sec:bif_analysis}

    We proceed to the analysis of the one-parameter family of differential equations~\eqref{eq:poly_rep_5} in $[0,1]^3$.
Our analysis will be focused on   $\boldsymbol{\mu}\in \mathcal{I}:=\left[\frac{850}{11},\frac{544}{5} \right]$ since 
for all $\boldsymbol{\mu}\in \inter \left( \mathcal{I} \right)$ there exists a unique equilibrium in $\inter \left( [0,1]^3\right) $
In what follows, we list some assertions that have been found (both analytical and numerically).

\subsection{Boundary dynamics}
We describe a list of equilibria that appear on  $\partial [0,1]^3$, as function on the parameter $\boldsymbol{\mu}$.
We also emphasise the bifurcations the equilibria undergo.

\begin{figure}[h]
    \centering
	\includegraphics[width=4.1cm]{./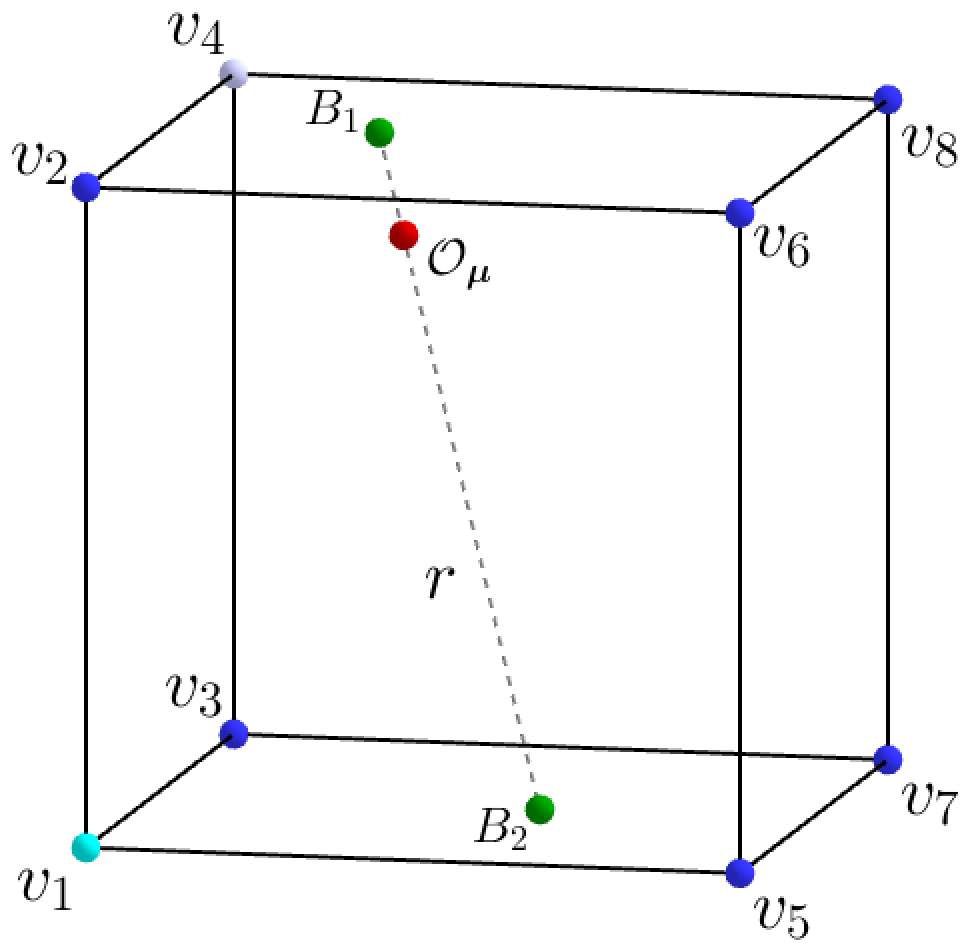}
	\includegraphics[width=4.1cm]{./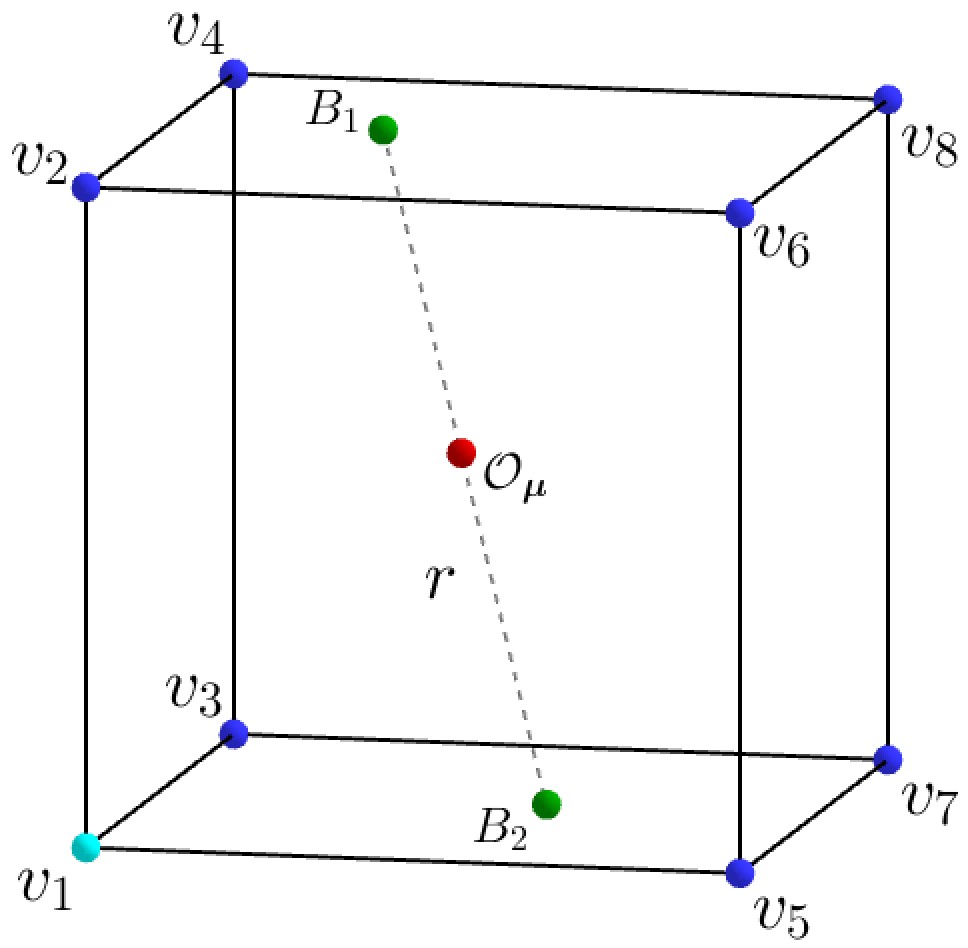}
	\includegraphics[width=4.1cm]{./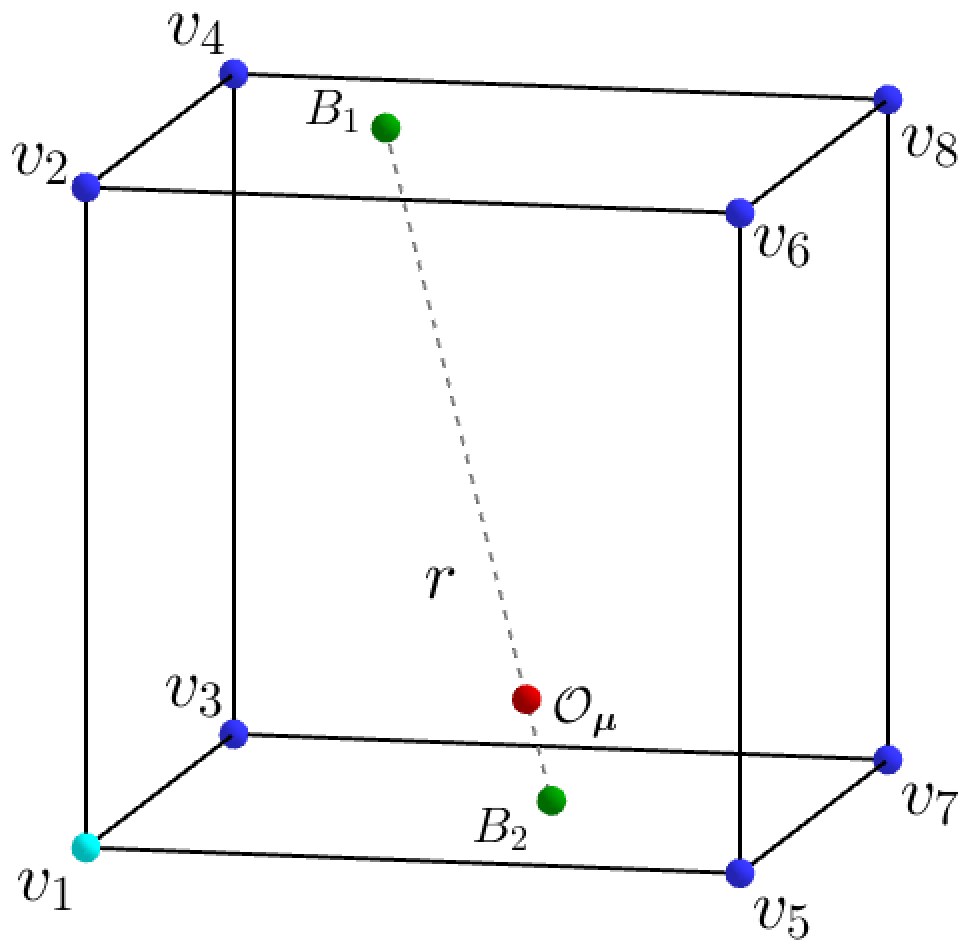}
    \caption{\footnotesize{The phase space and the corresponding equilibria of  \eqref{eq:poly_rep_5}: the eight vertices $v_1, \dots , v_8$ (blue), two equilibria on faces, $B_1, B_2$ (green), and the interior equilibrium $\mathcal{O}_{\boldsymbol{\mu}}$ (in red), for $\boldsymbol{\mu}=85$ (left), $\boldsymbol{\mu}=97$ (center) and $\boldsymbol{\mu}=106$ (right). The interior equilibrium $\mathcal{O}_{\boldsymbol{\mu}}$ lies on the line segment $r$ that connects $B_1$ to $B_2$ (Lemma~\ref{lem:line_segment}).}}\label{fig:equilibria_on_cube}
\end{figure}

From now on, all figures with numerical plots of the flow of \eqref{eq:poly_rep_5} on  $[0,1]^3$ are in the same position of Figure~\ref{fig:equilibria_on_cube} where $v_1=(0,0,0)$ is the vertex 
in light blue located in the lower left front corner. The cube has six faces defined, for  $i\in\{1, 2, 3 \}$, by
\begin{align*}
\sigma_{2i-1} &:= \{ (x_1,x_2,x_3)\in\partial [0,1]^3  \, :\,  x_i = 1 \}, \\
\sigma_{2i} &:= \{ (x_1,x_2,x_3)\in\partial [0,1]^3  \, :\,  x_i = 0 \}.
\end{align*}
In Table~\ref{tbl:Representation_of_vs} we identify the vertices that belong to each face. As suggested in Figure \ref{fig:equilibria_on_cube}, we set the notation $B_j$, $j=1, 2$ for the equilibria on the interior of the faces $\sigma_5$ and $\sigma_6$.
Formally, the $B$'s equilibria depend on $\boldsymbol{\mu}$ but, once again, we omit their dependence on the parameter.

\begin{lem}\label{lem:eq_cube_boundary}
For $\boldsymbol{\mu} \in \mathcal{I}$, the   vertices $v_1, \dots , v_8$ and
$$
B_1=\left( \frac{11}{34}, \frac{2040+11\boldsymbol{\mu}}{5372},1 \right)
\qquad\textrm{and}\qquad
B_2=\left( \frac{10}{17}, \frac{204+5\boldsymbol{\mu}}{1343},0 \right)
$$
are equilibria of \eqref{eq:poly_rep_5} and belong to the cube's boundary.
\end{lem}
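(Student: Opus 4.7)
The plan is a direct computation exploiting the factorised form of the vector field. Each component of system \eqref{eq:poly_rep_5} has the shape $\dot{x}_i = x_i(1-x_i)\,L_i(x,y,z;\boldsymbol{\mu})$, where $L_i$ is an affine function. Therefore every point of $[0,1]^3$ all of whose coordinates lie in $\{0,1\}$ makes each factor $x_i(1-x_i)$ vanish, so the eight vertices $v_1,\dots,v_8$ are equilibria automatically; this disposes of the first assertion with no further work.

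For the interior points of the faces $\sigma_5 = \{z=1\}$ and $\sigma_6=\{z=0\}$, the plan is to assume the non-extinct coordinates are in $(0,1)$ so that the factors $x_i(1-x_i)$ are non-zero, and then solve the resulting $2\times 2$ linear (in fact affine) system obtained by cancelling those factors. Concretely, on $\sigma_5$ the equation $\dot{z}=0$ is satisfied trivially; the equation $\dot{y}=0$ reduces to $60 - 102x - 27 = 0$, yielding $x = 11/34$; substituting $x=11/34$ and $z=1$ into the bracket of $\dot{x}$ gives
\[
-84 + (102-\boldsymbol{\mu})\tfrac{11}{34} + 158\,y - 9 = 0,
\]
which solves to $y = (2040 + 11\boldsymbol{\mu})/5372$. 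The analogous calculation on $\sigma_6$ (setting $z=0$) gives $x = 10/17$ from $\dot{y}=0$ and then $y = (204 + 5\boldsymbol{\mu})/1343$ from $\dot{x}=0$. This identifies $B_1$ and $B_2$ as claimed.

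Finally, one should verify that $B_1, B_2 \in [0,1]^3$ for every $\boldsymbol{\mu} \in \mathcal{I}$. The $x$-coordinates $11/34$ and $10/17$ are fixed and obviously in $(0,1)$, so the only thing to check is that the affine functions $\boldsymbol{\mu}\mapsto (2040+11\boldsymbol{\mu})/5372$ and $\boldsymbol{\mu}\mapsto (204+5\boldsymbol{\mu})/1343$ remain in $(0,1)$ on $\mathcal{I}=[850/11,\,544/5]$; by monotonicity this amounts to evaluating at the two endpoints, which is a short arithmetic check.

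No step is a genuine obstacle: the whole argument is elementary linear algebra once one observes the factorisation of the vector field. The only point that deserves a sentence of care is justifying the cancellation of $x(1-x)$ and $y(1-y)$ in the $B_j$ calculations, i.e.\ recording that the candidate coordinates computed along the way do lie strictly inside $(0,1)$ so that those factors are indeed non-zero and no spurious equilibrium is introduced or genuine one lost.
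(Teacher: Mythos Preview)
Your proposal is correct and follows essentially the same approach as the paper, which simply states that the proof is straightforward by computing the zeros of $f_{\boldsymbol{\mu}}$ on $\partial[0,1]^3$. Your version supplies the explicit linear algebra and the endpoint check for $\boldsymbol{\mu}\in\mathcal{I}$ that the paper leaves implicit, but the method is the same.
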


The proof of Lemma \ref{lem:eq_cube_boundary} is straightforward by computing zeros of $f_{\boldsymbol{\mu}}$ and taking into account that equilibria lie in $\partial [0,1]^3$.
The eigenvalues and eigendirections of the vertices and the $B$'s are summarised in Tables~\ref{tbl:Eigenv_of_vs} and \ref{tbl:Eigenv_of_Bs} in Appendix \ref{Appendix}, respectively.

\begin{table}[h] 
\centering
\begin{tabular}{c||cccccc}
Eq./Eignv. & $\sigma_1$  & $\sigma_2$  & $\sigma_3$  & $\sigma_4$  & $\sigma_5$ & $\sigma_6$ \\
\hline \hline \\[-4mm]
\,$v_1$           &  $*$   									&  $-84$  &  $*$   	&  $60$   	&  $*$   		&  $-162$  \\
\,$v_2$           &  $*$   									&  $-93$  &  $*$   	&  $33$   	&  $144$   	&  $*$  \\
\,$v_3$           &  $*$   									&  $74$  	&  $-60$   &  $*$   								&  $*$   		&  $75$  \\
\,$v_4$           &  $*$   									&  $65$  	&  $-33$   &  $*$   								&  $-93$   	&  $*$  \\
\,$v_5$           &  $\boldsymbol{\mu}-18$   	&  $*$  		&  $*$   	&  $-42$   &  $*$   		&  $-111$  \\
\,$v_6$           &  $\boldsymbol{\mu}-9$   	&  $*$  		&  $*$   	&  $-69$   &  $93$   		&  $*$  \\
\,$v_7$           &  $\boldsymbol{\mu}-176$   	&  $*$  		&  $42$   	&  $*$   								&  $*$   		&  $126$  \\
\,$v_8$           &  $\boldsymbol{\mu}-167$   &  $*$  		&  $69$   	&  $*$   								&  $-144$   	&  $*$  \\
\hline \hline
\end{tabular}
\vspace{.2cm}
\caption{\footnotesize{The eigenvalues of system~\eqref{eq:poly_rep_5} at the vertices, where
the entry at line $i$ and row $j$ is the eigenvalue of the vertex $v_j$ in the orthogonal direction to the face $\sigma_i$,
and the symbol $*$ means that the vertex $v_i$ does not belong to the face $\sigma_j$ of the cube $[0,1]^3$.}}
\label{tbl:Eigenv_of_vs}
\end{table}

If $A$ is a saddle-focus for system  \eqref{eq:poly_rep_5}, we say that it is of \emph{type} $(1,2)$ if $Df_{\boldsymbol{\mu}} (A)$ has a pair of non-real complex eigenvalues with $\dim W^s(A)=1$ and $\dim W^u(A)=2$.

\begin{table}[h]
\begin{small}
\begin{tabular}{|c|c|c|c|c|} \toprule
Eq.               & Eigenvalues & $\boldsymbol{\mu}$      			    &  On face        & On the interior  \\ \toprule
\multirow{3}{*}{$B_1$} & \multirow{3}{*}{$\left\{ \frac{2550-33 \boldsymbol{\mu}}{68}, z_1, \bar{z}_1 \right\}$}
			& $\frac{850}{11}<\boldsymbol{\mu}<102$	& $(+,+)_\textbf{C}$ 		& $(-)$ \\ \cmidrule{3-5}
		&	& $\boldsymbol{\mu}=102$							& $(0,0)_\textbf{C}$		& $(-)$ \\ \cmidrule{3-5}
		&	& $102<\boldsymbol{\mu}<\frac{544}{5}$		& $(-,-)_\textbf{C}$ 		& $(-)$ \\ \midrule
\multirow{3}{*}{$B_2$} & \multirow{3}{*}{$\left\{ \frac{15 \boldsymbol{\mu}-1632}{17}, z_2, \bar{z}_2 \right\}$}
			& $\frac{850}{11}<\boldsymbol{\mu}<102$	& $(+,+)_\textbf{C}$ 		& $(-)$ \\ \cmidrule{3-5}
		&	& $\boldsymbol{\mu}=102$							& $(0,0)_\textbf{C}$		& $(-)$ \\ \cmidrule{3-5}
		&	& $102<\boldsymbol{\mu}<\frac{544}{5}$		& $(-,-)_\textbf{C}$ 		& $(-)$ \\ \bottomrule
\end{tabular}
\end{small}

\begin{tiny}
\begin{align*}
z_1 &= \frac{2038674-19987 \boldsymbol{\mu} + \sqrt{19987} \sqrt{44671 \boldsymbol{\mu}^2-6976596 \boldsymbol{\mu} -1178700372}}{182648} \\[2mm]
z_2 &= \frac{282030 -2765 \boldsymbol{\mu} + \sqrt{2765} \sqrt{12965 \boldsymbol{\mu}^2-2471460 \boldsymbol{\mu} -66034188}}{22831}
\end{align*}
\end{tiny}
\vspace{-.2cm}
\captionof{table}{\small{Eigenvalues of equilibria $B_1$ and $B_2$, depending on $\boldsymbol{\mu}$, at the corresponding faces and pointing to the interior, where the signs   $(-)$, $(0)$, and $(+)$ mean that the eigenvalues are real negative, zero, or positive, respectively, and $(+,+)_\textbf{C}$ (respectively, $(-,-)_\textbf{C}$) means that the eigenvalues are conjugate (non-real) with positive (respectively, negative) real part and $(0,0)_\textbf{C}$ means that the eigenvalues are  pure imaginary. 
}}\label{tbl:Eigenv_of_Bs}
\end{table}

 The evolution of the eigenvalues' sign as function of $\boldsymbol{\mu}$ allows us to  locate \emph{transcritical bifurcations}, which are summarised in the following paragraph.
We   consider sub-intervals of $\mathcal{I}$ based on the values of $\boldsymbol{\mu}$ for which this bifurcation occurs. Observing   Table~\ref{tbl:Eigenv_of_Bs}, we may easily conclude that:

\begin{lem}
\label{lem:B_1 and B_2 stability}
For $\boldsymbol{\mu} \in \mathcal{I}$, the following assertions hold for  \eqref{eq:poly_rep_5}:\\
\begin{enumerate}
	\item if $\frac{850}{11}<\boldsymbol{\mu}<102$,  then $B_1$ and $B_2$ are saddle-foci of type $(1,2)$; \\
	\item if $\boldsymbol{\mu}=102$, then  $B_1$ and $B_2$ are non-hyperbolic when restricted to the corresponding faces\footnote{In fact, when restricted to the corresponding faces, $B_1$ and $B_2$ are centers.  The proof follows from Section 4 of \cite{alishah2015conservative}.};\\
	\item if $102<\boldsymbol{\mu}<\frac{544}{5}$,  then  $B_1$ and $B_2$ are sinks. \\
\end{enumerate} 
\end{lem}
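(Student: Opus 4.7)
The plan is to read off all three eigenvalues of $Df_{\boldsymbol{\mu}}(B_j)$ from Table~\ref{tbl:Eigenv_of_Bs} and analyse their dependence on $\boldsymbol{\mu}$ separately in each of the three cases. Since the equilibria $B_1,B_2$ lie on $\sigma_5,\sigma_6$ respectively (which are flow-invariant), the spectrum splits naturally into the eigenvalue in the direction transverse to the containing face (pointing to $\inter([0,1]^3)$) and the pair of eigenvalues tangent to the face. My task is simply to control the signs of the real parts of these three eigenvalues.

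First, I would handle the transverse (real) eigenvalues. For $B_1$ this is $\lambda_1^\perp=(2550-33\boldsymbol{\mu})/68$, which vanishes exactly at $\boldsymbol{\mu}=2550/33=850/11$, the left endpoint of $\mathcal{I}$, and is strictly negative on $\inter(\mathcal{I})$. For $B_2$ the transverse eigenvalue is $\lambda_2^\perp=(15\boldsymbol{\mu}-1632)/17$, which vanishes at $\boldsymbol{\mu}=1632/15=544/5$, the right endpoint of $\mathcal{I}$, and is again strictly negative on $\inter(\mathcal{I})$. So throughout $\inter(\mathcal{I})$ both equilibria contract in the direction normal to the face they lie on.

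Next I would treat the tangential pair $z_j,\bar z_j$. Using the closed-form expressions for $z_1,z_2$ under the table, one first checks that the quantities under the square roots are strictly negative on $\mathcal{I}$; this is a single-variable inequality for a quadratic in $\boldsymbol{\mu}$ and can be verified either by completing the square or by evaluating at the endpoints and noting that the quadratics have no real roots in $\mathcal{I}$. Hence $z_j,\bar z_j$ form a genuine non-real complex-conjugate pair with real part equal to the rational part of the expression, namely $\mathrm{Re}(z_1)=(2038674-19987\boldsymbol{\mu})/182648$ and $\mathrm{Re}(z_2)=(282030-2765\boldsymbol{\mu})/22831$. A direct computation shows both real parts vanish exactly at $\boldsymbol{\mu}=102$ (indeed $2038674/19987=282030/2765=102$), are strictly positive for $\boldsymbol{\mu}<102$, and strictly negative for $\boldsymbol{\mu}>102$.

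Combining these observations yields the three cases immediately. For $850/11<\boldsymbol{\mu}<102$, each $B_j$ has one negative real eigenvalue and a pair of non-real complex-conjugate eigenvalues of positive real part, hence $\dim W^s(B_j)=1$ and $\dim W^u(B_j)=2$ with a complex pair, so $B_j$ is a saddle-focus of type $(1,2)$. At $\boldsymbol{\mu}=102$ the complex pair becomes purely imaginary while the transverse eigenvalue remains nonzero, so $B_j$ is hyperbolic transversely to the face but non-hyperbolic when restricted to the face. For $102<\boldsymbol{\mu}<544/5$ all three eigenvalues have negative real part and $B_j$ is a sink. The only mildly delicate step in this plan is the verification that the radicands stay negative across all of $\mathcal{I}$ (so that the tangential eigenvalues remain genuinely complex rather than splitting into two reals); everything else is bookkeeping of signs.
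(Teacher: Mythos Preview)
Your proposal is correct and follows the same approach as the paper, which simply appeals to Table~\ref{tbl:Eigenv_of_Bs} without further comment. You have in fact supplied more detail than the paper does: the explicit verification that the transverse eigenvalues vanish precisely at the endpoints of $\mathcal{I}$, that the real parts of $z_1,z_2$ vanish exactly at $\boldsymbol{\mu}=102$, and the check that the radicands are negative on $\mathcal{I}$ so the tangential pair stays genuinely non-real.
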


Since there are no more invariant sets on the faces (for $\boldsymbol{\mu} \neq 102$), besides $B_1$, $B_2$ and the vertices, we may conclude that: 
\begin{lem}
\label{lem: dynamics on the faces}
With respect to system~\eqref{eq:poly_rep_5}, the following assertions hold:\\
\begin{enumerate}
\item For $\boldsymbol{\mu} \in \mathcal{I}$, if $p\in \inter(\sigma_i)$, $i=1,2,3,4$, then $\omega(p)$ is a vertex. \\
\item For $\boldsymbol{\mu} \in \left[\frac{850}{11}, 102\right[$:
\begin{enumerate}
 \item if $p\in \inter(\sigma_5)$,  then $\omega(p)$ is the cycle defined by $\{v_2,v_4,v_8,v_6\}$;
\item if $p\in \inter(\sigma_6)$,  then $\omega(p)$ is the cycle defined by $\{v_1,v_3,v_7,v_5\}$.\\
\end{enumerate}

\item For $\boldsymbol{\mu} \in  \left]102, \frac{544}{5}\right]$:
\begin{enumerate}
 \item if $p\in \inter(\sigma_5)$,  then $\omega(p)=\{B_1\}$;
\item if $p\in \inter(\sigma_6)$,  then $\omega(p)=\{B_2\}$. \\
\end{enumerate}
\end{enumerate}
\end{lem}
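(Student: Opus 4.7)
The plan is to reduce the proof on each face of the cube to a planar problem (each face of $[0,1]^3$ is flow-invariant for~\eqref{eq:poly_rep_5}) and then apply the Poincar\'e--Bendixson theorem. On each face the argument splits according to whether or not there is an equilibrium in the relative interior: when there is none we close things up by a planar index argument, and when there is one we combine the classical planar stability criterion for a heteroclinic polygon with a Hofbauer--Sigmund-type Lyapunov function to exclude limit cycles.

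For assertion~(1) and the faces $\sigma_1,\sigma_2,\sigma_3$ I would exhibit a strictly monotone coordinate on the open face: direct substitution in~\eqref{eq:poly_rep_5} gives $\dot y = y(1-y)(-42-27z)<0$ on $\inter(\sigma_1)$, $\dot y = y(1-y)(60-27z)>0$ on $\inter(\sigma_2)$, and $\dot z = z(1-z)(75+51x+18z)>0$ on $\inter(\sigma_3)$. Hence every interior $\omega$-limit set lies in one specific edge of the cube, where a further one-dimensional monotonicity argument identifies it with a single vertex (respectively $v_5$, $v_4$, $v_8$). For $\sigma_4$, a direct computation shows that $\dot x = \dot z = 0$ has no solution with $(x,z)\in(0,1)^2$ for $\boldsymbol{\mu}\in\mathcal{I}$, so $\inter(\sigma_4)$ is equilibrium-free; the planar index formula then rules out periodic orbits there and Poincar\'e--Bendixson forces every interior $\omega$-limit to be a boundary vertex.

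For assertions~(2) and~(3) I would restrict to $\sigma_6$ (the analysis on $\sigma_5$ is symmetric). Reading the sign of the vector field along the four edges of $\sigma_6$ yields the cyclic saddle-to-saddle connections $v_1 \to v_3 \to v_7 \to v_5 \to v_1$. Extracting the transverse stable/unstable eigenvalues from Table~\ref{tbl:Eigenv_of_vs} at each vertex, the planar characteristic number of this cycle is
\begin{equation*}
\rho(\boldsymbol{\mu}) \;=\; \frac{84}{60}\cdot\frac{60}{74}\cdot\frac{176-\boldsymbol{\mu}}{42}\cdot\frac{42}{\boldsymbol{\mu}-18} \;=\; \frac{84\,(176-\boldsymbol{\mu})}{74\,(\boldsymbol{\mu}-18)},
\end{equation*}
so that $\rho(\boldsymbol{\mu})>1$ precisely when $\boldsymbol{\mu}<102$ and $\rho(\boldsymbol{\mu})<1$ precisely when $\boldsymbol{\mu}>102$. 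Combined with Lemma~\ref{lem:B_1 and B_2 stability} (unstable focus for $\boldsymbol{\mu}<102$, sink for $\boldsymbol{\mu}>102$) and the Poincar\'e--Bendixson trichotomy, the desired alternatives follow as soon as one excludes periodic orbits surrounding $B_2$ in $\inter(\sigma_6)$.

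The only substantive obstacle is therefore ruling out such closed orbits inside $\inter(\sigma_5)$ and $\inter(\sigma_6)$. My intended tool is a Hofbauer--Sigmund-type Lyapunov function adapted to the replicator,
\begin{equation*}
V_{\boldsymbol{\mu}}(x,y) \;=\; \left(\frac{x}{x^*}\right)^{\!x^*}\!\left(\frac{1-x}{1-x^*}\right)^{\!1-x^*}\!\left(\frac{y}{y^*}\right)^{\!y^*}\!\left(\frac{1-y}{1-y^*}\right)^{\!1-y^*},
\end{equation*}
where $(x^*,y^*)$ denotes the projection of $B_j$ onto the face. A direct computation of $\dot V_{\boldsymbol{\mu}}/V_{\boldsymbol{\mu}}$ along~\eqref{eq:poly_rep_5} reduces to a quadratic form in $(x-x^*,y-y^*)$ whose coefficients depend polynomially on~$\boldsymbol{\mu}$; the conservative structure invoked in the footnote of Table~\ref{tbl:Eigenv_of_Bs} (from Section~4 of~\cite{alishah2015conservative}) is precisely the statement that this form vanishes identically at $\boldsymbol{\mu}=102$. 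For $\boldsymbol{\mu}\ne 102$ in $\mathcal{I}$ the form inherits a strict sign throughout $\inter(\sigma_j)$, so $V_{\boldsymbol{\mu}}$ is strictly monotone along every non-equilibrium trajectory; this excludes closed orbits in $\inter(\sigma_j)$ and, combined with the preceding two paragraphs, yields the lemma.
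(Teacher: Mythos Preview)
Your treatment of part (1) is correct and considerably more detailed than what the paper offers; the paper states this lemma as an immediate consequence of the sentence preceding it (``Since there are no more invariant sets on the faces\ldots'') together with Lemma~\ref{lem:B_1 and B_2 stability}, without writing out the planar analysis at all.

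There is, however, a genuine gap in your Lyapunov argument for parts (2)--(3). Carrying out the computation you describe on $\sigma_6$, with $u=x-x^*$, $v=y-y^*$, one finds
\[
\frac{\dot V_{\boldsymbol{\mu}}}{V_{\boldsymbol{\mu}}}
=(x^*-x)\bigl[(102-\boldsymbol{\mu})u+158v\bigr]+(y^*-y)(-102u)
=(\boldsymbol{\mu}-102)\,u^2-56\,uv,
\]
which is \emph{not} sign-definite (and does not vanish identically at $\boldsymbol{\mu}=102$, contrary to what you assert). So the function you wrote down does not do the job. The standard repair is to weight the two factors: with $V=V_x^{\alpha}V_y^{\beta}$ one gets $\dot V/V=\alpha(\boldsymbol{\mu}-102)u^2+(-158\alpha+102\beta)uv$, and the choice $\beta/\alpha=79/51$ kills the cross term, leaving $\alpha(\boldsymbol{\mu}-102)(x-x^*)^2$. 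This is only semi-definite (it vanishes on $\{x=x^*\}$), so ``strictly monotone along every non-equilibrium trajectory'' is still too strong; you then need either LaSalle or the observation that $\oint \dot V/V\,dt=0$ on a closed orbit forces $x\equiv x^*$, which is impossible since $\dot x\neq 0$ there unless $y=y^*$. An even cleaner alternative is the Dulac function $B=1/\bigl(x(1-x)y(1-y)\bigr)$: a one-line computation gives $\mathrm{div}(B\cdot f_{\boldsymbol{\mu}})=(102-\boldsymbol{\mu})/\bigl(y(1-y)\bigr)$ on $\sigma_6$, which has a strict sign for $\boldsymbol{\mu}\neq 102$ and rules out periodic orbits directly. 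The same works verbatim on $\sigma_5$.
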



\subsection{Interior equilibrium}
\label{ss:interior}
In this subsection, we focus our attention on the interior equilibrium and its relation to others on the cube's boundary.

\begin{lem}
\label{lem:int_equilib}
For $\boldsymbol{\mu} \in \inter\left(\mathcal{I}\right)$, system~\eqref{eq:poly_rep_5} has a unique interior equilibrium,
whose expression is
\begin{equation}\label{int_equil}\nonumber
\mathcal{O}_{\boldsymbol{\mu}}:=\left(\frac{68}{442-3\boldsymbol{\mu}},
\frac{2(9180-61 \boldsymbol{\mu})}{79(442-3 \boldsymbol{\mu})},
\frac{4(544-5 \boldsymbol{\mu})}{1326-9 \boldsymbol{\mu}}\right).
\end{equation}
\end{lem}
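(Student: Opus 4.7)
The plan is to reduce the nonlinear equilibrium condition to a $3\times 3$ linear system in $(x,y,z)$ whose matrix depends affinely on $\boldsymbol{\mu}$, solve it explicitly, and then verify that the resulting point lies in $\inter([0,1]^3)$ precisely when $\boldsymbol{\mu}\in \inter(\mathcal{I})$.

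First I would observe that for any interior equilibrium of \eqref{eq:poly_rep_5} one has $x,y,z\in (0,1)$, so the factors $x(1-x)$, $y(1-y)$ and $z(1-z)$ are all nonzero and can be cancelled. This turns the equilibrium condition $f_{\boldsymbol{\mu}}(x,y,z)=0$ into the linear system
\begin{equation*}
\begin{cases}
(102-\boldsymbol{\mu})\,x + 158\,y - 9\,z = 84, \\
102\,x + 27\,z = 60, \\
51\,x + 237\,y + 18\,z = 162.
\end{cases}
\end{equation*}
A direct computation of the determinant of the coefficient matrix (expanding along the second row, for instance) gives, up to sign, a nonzero multiple of $442-3\boldsymbol{\mu}$, which is positive for every $\boldsymbol{\mu}\in \mathcal{I}$ since $\boldsymbol{\mu}\le \tfrac{544}{5}<\tfrac{442}{3}$. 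Hence the system is nonsingular on $\mathcal{I}$ and has a unique solution, which Cramer's rule (or elimination, using the second equation to express $z$ in terms of $x$ and then substituting into the other two) yields as the vector $\mathcal{O}_{\boldsymbol{\mu}}$ displayed in the statement.

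To finish, I would show that each coordinate of $\mathcal{O}_{\boldsymbol{\mu}}$ lies strictly between $0$ and $1$ for $\boldsymbol{\mu}\in \inter(\mathcal{I})$. Since $442-3\boldsymbol{\mu}>0$ on $\mathcal{I}$, positivity of $x$ is clear, and $x<1$ amounts to $68<442-3\boldsymbol{\mu}$, which holds throughout $\mathcal{I}$. For the $z$-coordinate, writing $z=\tfrac{4(544-5\boldsymbol{\mu})}{3(442-3\boldsymbol{\mu})}$, positivity is equivalent to $\boldsymbol{\mu}<\tfrac{544}{5}$ and the inequality $z<1$ reduces, after clearing denominators, to $\boldsymbol{\mu}>\tfrac{850}{11}$; this is precisely the reason the interval $\mathcal{I}$ is chosen as it is, and it also explains why $\mathcal{O}_{\boldsymbol{\mu}}$ collides with the faces $\sigma_5$ and $\sigma_6$ at the two endpoints. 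A completely analogous sign check for $y$ (which is comfortably inside $(0,1)$ on $\mathcal{I}$) completes the verification.

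The only nonroutine step is the bookkeeping for the explicit form of the solution; everything else is linear algebra and interval checking. I would expect this to be the main (mild) obstacle, together with ensuring that the chosen common denominators $442-3\boldsymbol{\mu}$ and $1326-9\boldsymbol{\mu}=3(442-3\boldsymbol{\mu})$ are matched consistently across the three coordinates so as to recover exactly the formula given in the statement.
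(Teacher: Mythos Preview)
Your proposal is correct and follows the same approach the paper takes: the paper's proof simply states that the result is immediate by computing the non-trivial zeros of the vector field $f_{\boldsymbol{\mu}}$, and your argument carries out exactly this computation (with the added, useful verification that the coordinates lie in $(0,1)$ precisely on $\inter(\mathcal{I})$).
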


\begin{proof}
The proof is immediate by computing the non-trivial zeros of the vector field $f_{\boldsymbol{\mu}}$ of ~\eqref{eq:poly_rep_5}.
\end{proof}

Taking into account that  $\mathcal{O}_{\boldsymbol{\mu}}$, $B_1$ and $B_2$ depend on $\boldsymbol{\mu}$, it is worth to notice that 
$$ \lim_{\boldsymbol{\mu} \rightarrow \frac{850}{11}} \mathcal{O}_{\boldsymbol{\mu}}= \lim_{\boldsymbol{\mu} \rightarrow \frac{850}{11}} B_1 = \left( \frac{11}{34},\frac{85}{158},1\right) \in \sigma_5$$ and 
$$\lim_{\boldsymbol{\mu} \rightarrow \frac{544}{5}}\mathcal{O}_{\boldsymbol{\mu}}=\lim_{\boldsymbol{\mu} \to \frac{544}{5}} B_2 = \left(\frac{10}{17}, \frac{44}{79}, 0\right) \in \sigma_6, 
$$
which means that along $\mathcal{I}$, the point $\mathcal{O}_ {\boldsymbol{\mu}}$ travels from the face $\sigma_5$ to $\sigma_6$.
The following result shows an elegant relative position of the equilibria $B_1$, $B_2$ and $\mathcal{O}_{\boldsymbol{\mu}}$ (see Figure \ref{fig:equilibria_on_cube}).

\begin{lem}
\label{lem:line_segment}
For $\boldsymbol{\mu} \in \mathcal{I} $, the interior equilibrium $\mathcal{O}_{\boldsymbol{\mu}}$ belongs to the segment $[B_1 B_2]$.
\end{lem}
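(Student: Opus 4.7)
The plan is to parametrise the segment $[B_1 B_2]$ in the natural way and exploit the fact that the third coordinates of $B_1$ and $B_2$ are $1$ and $0$ respectively, so the third coordinate alone pins down the would-be parameter value. Write
\begin{equation*}
\gamma(t) = (1-t)\,B_1 + t\,B_2, \qquad t \in [0,1].
\end{equation*}
Since the third component of $\gamma(t)$ is $1-t$, equating it with the third component of $\mathcal{O}_{\boldsymbol{\mu}}$ from Lemma~\ref{lem:int_equilib} gives
\begin{equation*}
1 - t \;=\; \frac{4(544-5\boldsymbol{\mu})}{1326-9\boldsymbol{\mu}}, \qquad\text{hence}\qquad t \;=\; \frac{11\boldsymbol{\mu}-850}{3(442-3\boldsymbol{\mu})}.
\end{equation*}

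First I would verify that this $t$ really lies in $[0,1]$ for $\boldsymbol{\mu} \in \mathcal{I}=\left[\tfrac{850}{11},\tfrac{544}{5}\right]$. The denominator $3(442-3\boldsymbol{\mu})$ is positive throughout $\mathcal{I}$ (at the right endpoint $\boldsymbol{\mu}=544/5$ it still equals $3\cdot 115.6>0$). The numerator $11\boldsymbol{\mu}-850$ vanishes at the left endpoint and is nonnegative on $\mathcal{I}$, so $t\geq 0$. The inequality $t\leq 1$ is equivalent to $11\boldsymbol{\mu}-850\leq 1326-9\boldsymbol{\mu}$, i.e.\ $\boldsymbol{\mu}\leq\tfrac{544}{5}$, which is exactly the right endpoint of $\mathcal{I}$. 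Note also that at the endpoints $t=0$ (so $\gamma(0)=B_1=\mathcal{O}_{850/11}$) and $t=1$ (so $\gamma(1)=B_2=\mathcal{O}_{544/5}$), in agreement with the limits already noted before the statement.

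It remains to check that, with this explicit $t=t(\boldsymbol{\mu})$, the first and second coordinates of $\gamma(t)$ coincide with those of $\mathcal{O}_{\boldsymbol{\mu}}$. For the first coordinate, substituting $1-t$ and $t$ and clearing the common denominator $3(442-3\boldsymbol{\mu})$ reduces the identity to a linear relation in $\boldsymbol{\mu}$: the two occurrences of $\boldsymbol{\mu}$ cancel and one is left with $44\cdot 544 - 20\cdot 850 = 6936$, giving precisely $\frac{6936}{102(442-3\boldsymbol{\mu})}=\frac{68}{442-3\boldsymbol{\mu}}$, which matches. For the second coordinate, one uses $5372=4\cdot 1343$ to put the two terms over a common denominator; after expansion the $\boldsymbol{\mu}^2$ contributions $-55\boldsymbol{\mu}^2$ and $+55\boldsymbol{\mu}^2$ cancel, and the remaining affine expression $936360-6222\boldsymbol{\mu}$ factors as $51(18360-122\boldsymbol{\mu})$, while the denominator factors as $4029(442-3\boldsymbol{\mu})=51\cdot 79(442-3\boldsymbol{\mu})$; cancelling the common factor $51$ yields exactly $\frac{2(9180-61\boldsymbol{\mu})}{79(442-3\boldsymbol{\mu})}$.

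The only real ``obstacle'' here is the bookkeeping of the second-coordinate identity, where one must trust the cancellation of the quadratic terms in $\boldsymbol{\mu}$; this is the single nontrivial algebraic cancellation, and once it is observed the rest is arithmetic. Conceptually, there is nothing deep: three affine functions of $\boldsymbol{\mu}$ pass through the same one-parameter family of points of $[0,1]^3$, and collinearity is forced by the fact that we already match the third coordinate and both the numerator and denominator of the remaining identities are at most linear in $\boldsymbol{\mu}$ after simplification.
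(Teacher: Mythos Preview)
Your proof is correct and follows essentially the same approach as the paper: parametrise the segment $[B_1B_2]$ affinely, solve for the parameter using one coordinate, and verify that it lies in $[0,1]$ precisely when $\boldsymbol{\mu}\in\mathcal{I}$. The paper records only the resulting parameter value $k=\dfrac{850-11\boldsymbol{\mu}}{9\boldsymbol{\mu}-1326}$ (which equals your $t$ after clearing the sign in numerator and denominator) and the equivalence $k\in[0,1]\Leftrightarrow\boldsymbol{\mu}\in\mathcal{I}$, whereas you additionally spell out the verification of the first and second coordinates; this extra detail is welcome but does not constitute a different method.
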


\begin{proof}
Let $r$ be the segment $[B_1 B_2]$ defined by
$$
r\,: \quad (x,y,z)=B_1+k \vv{B_1 B_2}, \quad \textrm{for } k\in [0,1]. 
$$
By a simple computation we have that
$$
\mathcal{O}_{\boldsymbol{\mu}} \in r
\quad\Leftrightarrow\quad
k=\frac{850-11 \boldsymbol{\mu}}{9 \boldsymbol{\mu}-1326}\in [0,1]
\quad\Leftrightarrow\quad
\boldsymbol{\mu} \in \mathcal{I}.
$$
\end{proof}

\begin{lem}
\label{lem:jacobian_eigenv}
There exists $\boldsymbol{\mu}_2 \in \mathcal{I}$ such that the equilibrium $\mathcal{O}_{\boldsymbol{\mu}}$ undergoes a supercritical Hopf bifurcation.
\end{lem}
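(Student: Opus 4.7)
The plan is to apply the classical Hopf bifurcation theorem (see e.g. Kuznetsov, \emph{Elements of Applied Bifurcation Theory}, Ch.~3). First, I would substitute the explicit coordinates of $\mathcal{O}_{\boldsymbol{\mu}}$ from Lemma~\ref{lem:int_equilib} into the vector field \eqref{eq:poly_rep_5} and compute the Jacobian $Df_{\boldsymbol{\mu}}(\mathcal{O}_{\boldsymbol{\mu}})$; its entries are rational functions of $\boldsymbol{\mu}$. The characteristic polynomial then has the form $p(\lambda,\boldsymbol{\mu})=\lambda^3+a(\boldsymbol{\mu})\lambda^2+b(\boldsymbol{\mu})\lambda+c(\boldsymbol{\mu})$ with $a,b,c$ rational in $\boldsymbol{\mu}$.

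To locate a candidate bifurcation value, I would apply the Routh--Hurwitz/Orlando criterion: the cubic $p(\cdot,\boldsymbol{\mu})$ has a pair of pure imaginary roots $\pm i\beta(\boldsymbol{\mu})$ (with $\beta(\boldsymbol{\mu})>0$) together with one real root $-a(\boldsymbol{\mu})$ if and only if $a(\boldsymbol{\mu})b(\boldsymbol{\mu})=c(\boldsymbol{\mu})$ and $b(\boldsymbol{\mu})>0$. I would solve this algebraic condition within the interval $\mathcal{I}$, check that exactly one root $\boldsymbol{\mu}_2\in\mathcal{I}$ arises, and verify that the third eigenvalue $-a(\boldsymbol{\mu}_2)$ is real and nonzero, so that the center-eigenspace is genuinely two-dimensional.

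Next, I would establish the transversality condition. Writing the complex eigenvalue as $\lambda(\boldsymbol{\mu})=\alpha(\boldsymbol{\mu})+i\beta(\boldsymbol{\mu})$ near $\boldsymbol{\mu}_2$, implicit differentiation of $p(\lambda(\boldsymbol{\mu}),\boldsymbol{\mu})=0$ gives
\[
\alpha'(\boldsymbol{\mu}_2)=-\mathrm{Re}\!\left(\frac{\partial_{\boldsymbol{\mu}} p}{\partial_\lambda p}\bigg|_{\lambda=i\beta(\boldsymbol{\mu}_2),\,\boldsymbol{\mu}=\boldsymbol{\mu}_2}\right),
\]
and it remains to check that this quantity is nonzero (strictly positive, so the complex pair crosses the imaginary axis from left to right as $\boldsymbol{\mu}$ increases). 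This is a direct rational arithmetic verification, consistent with the intuition that $\mathcal{O}_{\boldsymbol{\mu}}$ transitions between being a stable focus and an unstable focus as $\boldsymbol{\mu}$ traverses $\mathcal{I}$, in agreement with Lemma~\ref{lem:B_1 and B_2 stability}.

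The hardest step is establishing that the bifurcation is \emph{supercritical}, which requires the first Lyapunov coefficient $\ell_1(\boldsymbol{\mu}_2)$ to be negative. For this I would project $f_{\boldsymbol{\mu}_2}$ onto its two-dimensional center eigenspace spanned by $\mathrm{Re}(w),\,\mathrm{Im}(w)$ (where $w$ is an eigenvector of $i\beta(\boldsymbol{\mu}_2)$), compute the quadratic and cubic multilinear forms $B(\cdot,\cdot)$ and $C(\cdot,\cdot,\cdot)$ arising from the Taylor expansion of $f_{\boldsymbol{\mu}_2}$ around $\mathcal{O}_{\boldsymbol{\mu}_2}$, and evaluate $\ell_1$ using Kuznetsov's formula
\[
\ell_1(\boldsymbol{\mu}_2)=\frac{1}{2\beta}\mathrm{Re}\!\left(ig(2,1)\right),
\]
with the standard normal-form coefficient $g(2,1)$. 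The main obstacle is the algebraic bulk: the coordinates of $\mathcal{O}_{\boldsymbol{\mu}_2}$ contain denominators in $\boldsymbol{\mu}$, so the symbolic expressions for $\ell_1$ become unwieldy. I would therefore rely on the \emph{Mathematica} routines described in Section~\ref{sec:software_code} to evaluate $\ell_1(\boldsymbol{\mu}_2)$ symbolically (or to sufficient numerical precision) and conclude $\ell_1(\boldsymbol{\mu}_2)<0$, which together with the previous two steps completes the proof.
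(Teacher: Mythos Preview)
Your proposal is correct and in fact more systematic than the paper's own argument. Both begin by writing down the Jacobian $Df_{\boldsymbol{\mu}}(\mathcal{O}_{\boldsymbol{\mu}})$ explicitly as a matrix of rational functions of $\boldsymbol{\mu}$, but from there the approaches diverge. The paper does not invoke the Routh--Hurwitz/Orlando condition $a(\boldsymbol{\mu})b(\boldsymbol{\mu})=c(\boldsymbol{\mu})$ to locate $\boldsymbol{\mu}_2$ algebraically; instead it states that the three eigenvalue branches have ``intractable analytical expression'' and relies on numerical plots of the real and imaginary parts of the spectrum (Figures~\ref{fig:Real_Egvl_and_Imaginary_part_Complex_int_eq_func_mu_2} and~\ref{fig:Real_part_Complex_Egvls_int_eq_func_mu_2}) to exhibit $\boldsymbol{\mu}_2\approx 105.04$ and to confirm both $\beta(\boldsymbol{\mu}_2)>0$ and the transversality $\alpha'(\boldsymbol{\mu}_2)\neq 0$. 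Your use of the cubic Hurwitz determinant and implicit differentiation of $p(\lambda(\boldsymbol{\mu}),\boldsymbol{\mu})=0$ is the cleaner route and avoids appealing to pictures.

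Two small corrections. First, the direction of crossing is the opposite of what you wrote: the paper finds $\alpha(\boldsymbol{\mu})>0$ for $\boldsymbol{\mu}<\boldsymbol{\mu}_2$ and $\alpha(\boldsymbol{\mu})<0$ for $\boldsymbol{\mu}>\boldsymbol{\mu}_2$, so the pair crosses from right to left and $\alpha'(\boldsymbol{\mu}_2)<0$. This does not affect the argument (only $\alpha'(\boldsymbol{\mu}_2)\neq 0$ is needed), but your parenthetical ``strictly positive'' and the appeal to Lemma~\ref{lem:B_1 and B_2 stability} are off. Second, the paper does not actually carry out the Lyapunov-coefficient computation you describe: it simply asserts that the bifurcation destroys an attracting periodic solution $\mathcal{C}_{\boldsymbol{\mu}}$, with supercriticality taken as a numerical observation rather than verified via $\ell_1(\boldsymbol{\mu}_2)<0$. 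Your plan to evaluate $\ell_1$ through Kuznetsov's normal-form formula (with computer algebra for the heavy rational arithmetic) would genuinely strengthen the statement beyond what the paper proves.
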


\begin{proof}
For $\boldsymbol{\mu} \in \mathcal{I}$, $D f_{\boldsymbol{\mu}}\left( \mathcal{O}_{\boldsymbol{\mu}} \right)$ depends on ${\boldsymbol{\mu}}$ and is explicitly given by 
\vspace{.2cm}
$$
\begin{small}
\left(
\begin{array}{ccc}
 \frac{68 (\boldsymbol{\mu} -102) (3 \boldsymbol{\mu} -374)}{(442-3 \boldsymbol{\mu} )^2} & -\frac{10744 (3 \boldsymbol{\mu} -374)}{(442-3 \boldsymbol{\mu} )^2} & \frac{612 (3 \boldsymbol{\mu} -374)}{(442-3 \boldsymbol{\mu} )^2} \\[3mm]
 -\frac{204 (61 \boldsymbol{\mu} -9180) (115 \boldsymbol{\mu} -16558)}{6241 (442-3 \boldsymbol{\mu} )^2} & 0 & -\frac{54 (61 \boldsymbol{\mu} -9180) (115 \boldsymbol{\mu} -16558)}{6241 (442-3 \boldsymbol{\mu} )^2} \\[3mm]
 -\frac{68 (5 \boldsymbol{\mu} -544) (11 \boldsymbol{\mu} -850)}{3 (442-3 \boldsymbol{\mu} )^2} & -\frac{316 (5 \boldsymbol{\mu} -544) (11 \boldsymbol{\mu} -850)}{3 (442-3 \boldsymbol{\mu} )^2} & -\frac{8 (5 \boldsymbol{\mu} -544) (11 \boldsymbol{\mu} -850)}{(442-3 \boldsymbol{\mu} )^2} \\
\end{array}
\right),
\end{small}
\vspace{.2cm}
$$
whose characteristic polynomial has three roots, which depend on $\boldsymbol{\mu}$.
Although these three functions have an intractable analytical expression,
it is possible to show the existence of 
$\boldsymbol{\mu}_2\approx 105.04 \in \mathcal{I}$ such that   $D f_{\boldsymbol{\mu}}\left( \mathcal{O}_{\boldsymbol{\mu}} \right)$ exhibits a pair of  complex  (non-real) eigenvalues of the type $\alpha(\boldsymbol{\mu})\pm i \beta(\boldsymbol{\mu})$ such that $\alpha, \beta$ are $C^1$ maps, depend on $\boldsymbol{\mu}$ and: \\
\begin{enumerate}
	\item  $\beta(\boldsymbol{\mu}_2)>0$ (Figure~\ref{fig:Real_Egvl_and_Imaginary_part_Complex_int_eq_func_mu_2});
	\item $\alpha$  is positive for  $\boldsymbol{\mu}<\boldsymbol{\mu}_2$;
	\item $\alpha$ is negative for  $\boldsymbol{\mu}>\boldsymbol{\mu}_2$ (Figure~\ref{fig:Real_part_Complex_Egvls_int_eq_func_mu_2}).\\
\end{enumerate}
As suggested by Figure~\ref{fig:Real_part_Complex_Egvls_int_eq_func_mu_2} (right), the complex (non-real) eigenvalues cross the imaginary axis with  positive speed as $ \boldsymbol{\mu}$ passes through $\boldsymbol{\mu}_2$, confirming that:
$$
\frac{   d\, \alpha}{d\, \boldsymbol{\mu} }\Big|_{\boldsymbol{\mu} = \boldsymbol{\mu}_2} \neq 0. 
$$
  This means that at $\boldsymbol{\mu} =\boldsymbol{\mu}_2$, the equilibrium $\mathcal{O}_{\boldsymbol{\mu}}$ undergoes a  Hopf bifurcation (destroying an attracting periodic solution, say $\mathcal{C}_{\boldsymbol{\mu}}$).  \end{proof}

\begin{figure}[h]
	\includegraphics[width=6cm]{./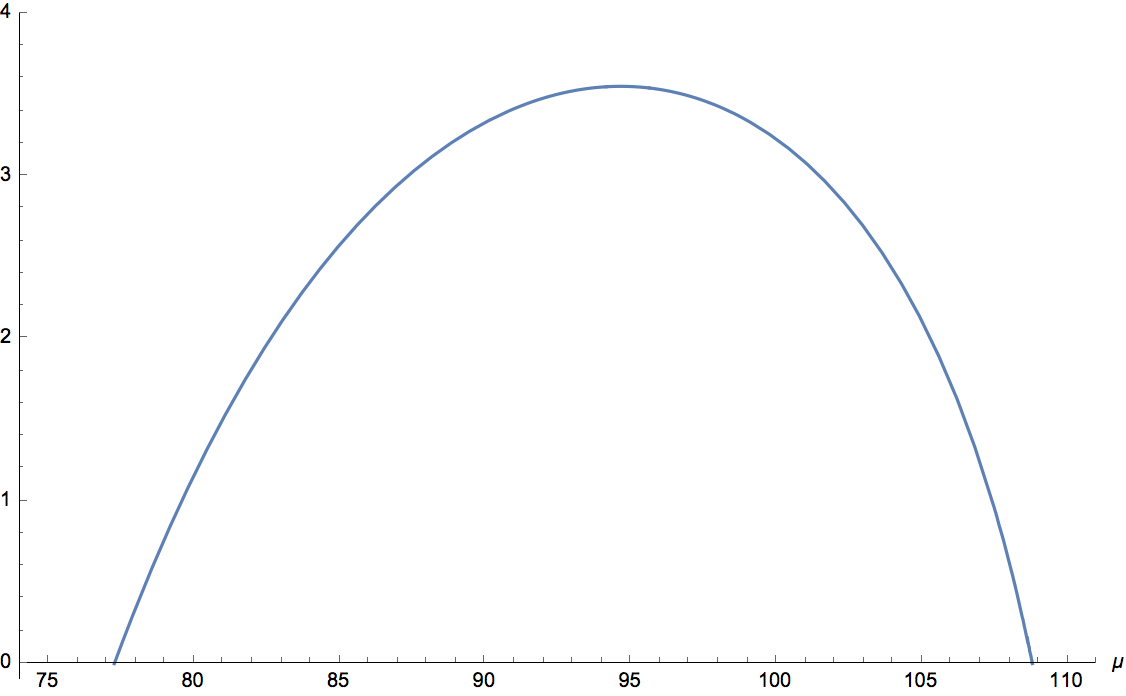}
	\hspace{.3cm}
	\includegraphics[width=6cm]{./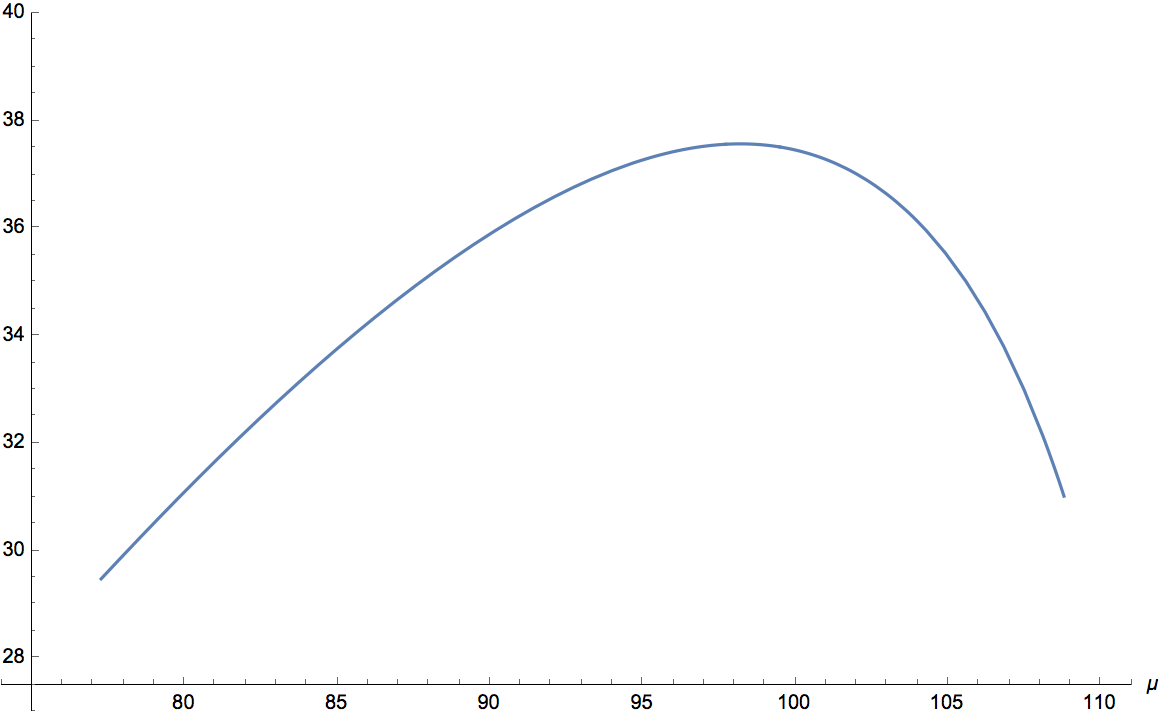}
    \caption{\small Graph of  the real  eigenvalue of $Df_{\boldsymbol{\mu}} \left(\mathcal{O}_{\boldsymbol{\mu}} \right)$ (left) and graph of  the imaginary part of the complex eigenvalues of $Df_{\boldsymbol{\mu}} \left(\mathcal{O}_{\boldsymbol{\mu}} \right)$ (right) where $\boldsymbol{\mu} \in \mathcal{I}$, for system~\eqref{eq:poly_rep_5}. } \label{fig:Real_Egvl_and_Imaginary_part_Complex_int_eq_func_mu_2}
\end{figure}

\begin{figure}[h]
	\includegraphics[width=13cm]{./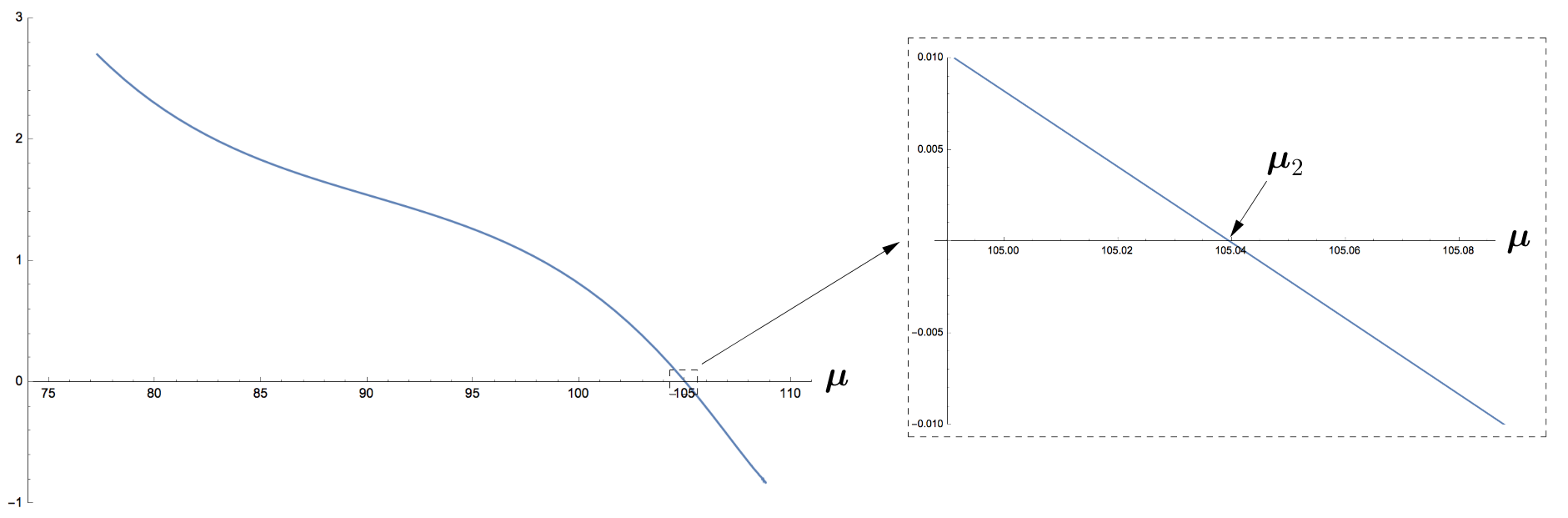}
    \caption{\small Graph of  the real part of the eigenvalues of $Df_{\boldsymbol{\mu}} \left(\mathcal{O}_{\boldsymbol{\mu}} \right)$ for $\boldsymbol{\mu} \in \mathcal{I}$ (left) and its zoom around $\boldsymbol{\mu}_2$, with $\boldsymbol{\mu} \in \left[ 104.99, 105.09 \right]$ (right), for system~\eqref{eq:poly_rep_5}.
 }
\label{fig:Real_part_Complex_Egvls_int_eq_func_mu_2}
\end{figure}

\subsection*{Terminology}\label{subsec:termin}
For $\boldsymbol{\mu} < \boldsymbol{\mu}_2$ and $j=1,2$, the interior equilibrium $\mathcal{O}_{\boldsymbol{\mu}}$ is a source and the tangent space $\RR^3$ may be   decomposed as two $Df_{\boldsymbol{\mu}}$--invariant subspaces $E^u_1$ and $E^u_2$ (in direct sum) 
such that $\dim E^u_j=j$. The set $E^u_1$ is  the eigendirection associated to the real positive eigenvalue and $E^u_2$ is associated to the complex (non-real) eigenvalues. We denote by $W^{u}_2(\mathcal{O}_{\boldsymbol{\mu}})$ the part of the invariant manifold whose tangent space at $\mathcal{O}_{\boldsymbol{\mu}}$ is $E^{u}_2$. 
Let
$$
\mathcal{I}_1=\left[ \frac{850}{11}, \boldsymbol{\mu}_1 \right[,
\quad
\mathcal{I}_2=\left] \boldsymbol{\mu}_1, \boldsymbol{\mu}_2 \right[,
\quad\textrm{and}\quad
\mathcal{I}_3=\left] \boldsymbol{\mu}_2, \frac{544}{5} \right],
$$
where $\boldsymbol{\mu}_1 = 102$ and $\boldsymbol{\mu}_2\approx 105.04$. We have that\\
\begin{eqnarray*}
\mathcal{I}&=&\mathcal{I}_1 \cup \{ \boldsymbol{\mu}_1  \} \cup \mathcal{I}_2 \cup \{ \boldsymbol{\mu}_2  \} \cup \mathcal{I}_3 \\\\
 \boldsymbol{\mu}_1=102& \mapsto&  \text{Transcritical bifurcation of } B_1 \text{  and  } B_2 \, \,  (\text{Lemma \ref{lem:B_1 and B_2 stability}});\\\\
  \boldsymbol{\mu}_2\approx 105.04 & \mapsto &  \text{Supercritical Hopf bifurcation of } \mathcal{O}_{\boldsymbol{\mu}} \text{ destroying} \\
  &  &  \text{the  periodic solution } C_{ \boldsymbol{\mu}} \, \,   (\text{Lemma \ref{lem:jacobian_eigenv}}). \\
\end{eqnarray*}

\subsection{Heteroclinic network}
\label{ss:heteroclinic}
In this subsection, we show that \eqref{eq:poly_rep_5} exhibits a heteroclinic network   formed by six cycles. 
\begin{lem}
\label{Lem:het_cycles}
For $ \boldsymbol{\mu}\in \mathcal{I}$,
the flow associated to~\eqref{eq:poly_rep_5} has six heteroclinic cycles whose connections are associated to the following set of equilibria (Figure~\ref{fig:het_cycles}):
\begin{enumerate}
	\item $\mathcal{H}_1:=\{v_2,v_4,v_8,v_6,v_2\}$;
	\item $\mathcal{H}_2:=\{v_1,v_3,v_4,v_8,v_6,v_2,v_1\}$;
	\item $\mathcal{H}_3:=\{v_1,v_3,v_4,v_8,v_6,v_5,v_1\}$;
	\item $\mathcal{H}_4:=\{v_1,v_3,v_7,v_8,v_6,v_2,v_1\}$;
	\item $\mathcal{H}_5:=\{v_1,v_3,v_7,v_8,v_6,v_5,v_1\}$;
	\item $\mathcal{H}_6: =\{v_1,v_3,v_7,v_6,v_1\}$.
\end{enumerate}
\end{lem}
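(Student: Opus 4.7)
The plan is to reduce the statement to a finite combinatorial check based on the eigenvalue data already collected in Table~\ref{tbl:Eigenv_of_vs} and on the face dynamics summarised in Lemma~\ref{lem: dynamics on the faces}. The coordinate faces $\sigma_1,\dots,\sigma_6$ of $[0,1]^3$ are flow-invariant for~\eqref{eq:poly_rep_5}, so each of the twelve edges of the cube, being the intersection of two such faces, is a one-dimensional flow-invariant manifold. If the eigenvalue of $Df_{\boldsymbol{\mu}}(v_i)$ in the direction of an incident edge is negative at one endpoint and positive at the other, then by the Stable/Unstable Manifold Theorem this edge is a one-dimensional heteroclinic connection from the repelling vertex to the attracting one.

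First I would orient every edge of the cube using Table~\ref{tbl:Eigenv_of_vs}, noting that throughout $\mathcal{I}=\left[\frac{850}{11},\frac{544}{5}\right]$ the parameter-dependent eigenvalues $\boldsymbol{\mu}-18$ and $\boldsymbol{\mu}-9$ are strictly positive while $\boldsymbol{\mu}-176$ and $\boldsymbol{\mu}-167$ are strictly negative; every other entry of the table is a non-zero constant. Consequently, the orientation of each of the twelve edges is uniform across the whole interval $\mathcal{I}$, and the resulting directed graph on the vertex set $\mathcal{V}$ does not depend on~$\boldsymbol{\mu}$. At this stage each vertex is hyperbolic as a restriction of the flow to every incident edge, so the list of one-dimensional edge connections is fully determined.

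Second, I would check for each of the six proposed cycles $\mathcal{H}_1,\dots,\mathcal{H}_6$ that every consecutive pair of vertices is joined either by a directed edge of this graph or, where the two vertices share a common face without being joined by an edge, by a one-dimensional heteroclinic connection lying in the interior of that face (existence of which follows from a source-to-saddle argument on the face, whose eigenstructure is already read off from Table~\ref{tbl:Eigenv_of_vs} combined with Lemma~\ref{lem: dynamics on the faces}). The structural remark that makes the result transparent is that Lemma~\ref{lem: dynamics on the faces} already identifies two ``face cycles'' inside the invariant faces $\sigma_5$ and $\sigma_6$; the remaining four ``mixed'' cycles are then obtained by independently choosing one of the two available transitions from $\sigma_6$ to $\sigma_5$ (namely $v_3\to v_4$ or $v_7\to v_8$) and one of the two available return transitions from $\sigma_5$ to $\sigma_6$ (namely $v_2\to v_1$ or $v_6\to v_5$), producing $2+2\times 2=6$ cycles in total and matching the claimed count.

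The main difficulty is notational bookkeeping rather than dynamics: the entry at row $\sigma_i$ and column $v_j$ of Table~\ref{tbl:Eigenv_of_vs} is the eigenvalue in the direction \emph{normal} to $\sigma_i$, and is tangential to an incident edge of $v_j$ only when $v_j\in\sigma_i$, so one must carefully read the correct entry before concluding about hyperbolicity and orientation along each edge. Once these conventions are fixed, nothing beyond the hyperbolicity of every vertex along each incident edge direction is required to close the argument.
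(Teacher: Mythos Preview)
Your approach is essentially the paper's: the paper's proof is one sentence, appealing to the absence of edge equilibria other than the vertices and to the eigenvalue data in Table~\ref{tbl:Eigenv_of_vs} to orient every edge, which is exactly your first step.

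Two small points. First, you omit the observation the paper makes explicit: that \emph{there are no equilibria on the edges besides the vertices}. Hyperbolicity of the endpoints along the edge direction is not by itself enough to conclude that the whole edge is a single heteroclinic connection; one must also rule out interior rest points on each edge (this is an easy check from~\eqref{eq:poly_rep_5}, but it is the one ingredient your write-up does not mention). Second, your ``or'' clause about one-dimensional connections lying in the interior of a face is unnecessary: every consecutive pair of vertices in each of the six listed cycles is joined by an edge of the cube (in $\mathcal{H}_6$ the intended sequence is $v_1\to v_3\to v_7\to v_5\to v_1$, the face cycle on $\sigma_6$ already identified in Lemma~\ref{lem: dynamics on the faces}; the ``$v_6$'' in the statement is a typographical slip). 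So the face-interior source-to-saddle argument never needs to be invoked, and the proof reduces purely to the directed-edge check you outline.
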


\begin{proof}
Since there are no  equilibria on the edges besides the vertices, analysing the eigenvalues of system~\eqref{eq:poly_rep_5} at the vertices (see Table~\ref{tbl:Eigenv_of_vs}), the result follows.
\end{proof}

From now on, denote by $\mathcal{H}$ the heteroclinic network $\mathcal{H}_1\cup\dots \cup\mathcal{H}_6$.

\begin{figure}[h]
\includegraphics[width=13cm]{./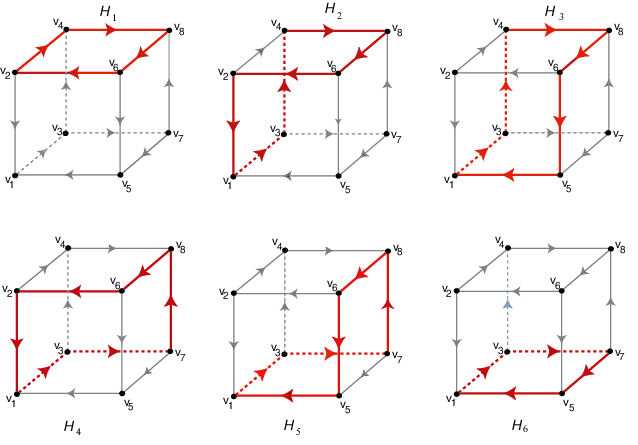}
\caption{\small{\emph{Illustration of $\mathcal{H}$ and its heteroclinic cycles  on $\partial [0,1]^3$, for $\boldsymbol{\mu} \in \mathcal{I}$.  }}
 } \label{fig:het_cycles}
\end{figure}

\begin{lem}
\label{Lem:switching}
For $ \boldsymbol{\mu}\in \mathcal{I}$,  the equilibria $v_2, v_3, v_6, v_7$ are switching nodes for system  \eqref{eq:poly_rep_5}. \\
\end{lem}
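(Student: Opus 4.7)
The plan is, for each $B\in\{v_2,v_3,v_6,v_7\}$, to verify the two ingredients of the switching definition: the local unstable manifold of $B$ is a flow-invariant face of the cube containing both outgoing heteroclinic edges, and inside that face there are open sets of initial conditions that follow each outgoing edge at distance $\varepsilon$. Via the Inclination ($\lambda$-) Lemma, the forward image of any disk $D$ transverse to the incoming connection contains, for $t$ large, points in each of these open sets, and those points then follow each outgoing connection.

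First I would read the linearisation off Table~\ref{tbl:Eigenv_of_vs}: for $\boldsymbol{\mu}\in\mathcal{I}$ and each of these four vertices, $Df_{\boldsymbol{\mu}}(B)$ has exactly one negative and two positive eigenvalues; the negative eigendirection is tangent to the unique incoming edge, while the two positive eigendirections span the unique face of the cube that contains $B$ together with both outgoing edges, a face which is flow-invariant and coincides with $W^{u}_{\loc}(B)$. For instance, at $v_2=(0,0,1)$ the eigenvalues are $-93,33,144$, the incoming connection $[v_6\to v_2]$ is tangent to $+x$, and the outgoing connections $[v_2\to v_4]$ and $[v_2\to v_1]$ span $\sigma_2$; analogously $W^{u}_{\loc}(v_3)\subset\sigma_3$, $W^{u}_{\loc}(v_6)\subset\sigma_4$, $W^{u}_{\loc}(v_7)\subset\sigma_1$. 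Given $\varepsilon>0$ and a two-dimensional disk $D$ transverse to $\gamma=[A\to B]$, the Inclination Lemma ensures that a subdisk of $\varphi(t,D)$ converges in the $C^1$ topology to $W^{u}_{\loc}(B)$ as $t\to+\infty$, so it suffices to exhibit, inside $W^{u}_{\loc}(B)$, two open sets of initial conditions whose trajectories follow each outgoing edge at distance $\varepsilon$.

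The restriction of $f_{\boldsymbol{\mu}}$ to this face is a two-dimensional polymatrix replicator with $B$ a source of eigenvalues $0<\lambda_1<\lambda_2$ and, by Lemma~\ref{lem: dynamics on the faces}(1), no interior equilibria, so every interior trajectory accumulates on a vertex. In local coordinates $(y,s)$ on the face with $y$-axis on the slow ($\lambda_1$) outgoing edge and $s$-axis on the fast ($\lambda_2$) outgoing edge, the linearised flow $y(t)=y_0 e^{\lambda_1 t}$, $s(t)=s_0 e^{\lambda_2 t}$ produces the cusp $\{\,0<s_0<c\,y_0^{\lambda_2/\lambda_1}\,\}$ whose trajectories stay inside an $\varepsilon$-tube of the slow edge until entering a neighbourhood of the next saddle (whose stable manifold inside the face is precisely that edge), together with a symmetric open sector close to the $s$-axis whose trajectories follow the fast edge. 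Pulling both open sets back through $\varphi(t,\cdot)$ supplies the points in $D$ required by the definition of switching, finishing the proof. The main delicacy is the slow-edge case: the admissible initial conditions form a thin cusp tangent to the edge, so the linear estimate has to be continued globally up to the next saddle, which is where the absence of interior equilibria on the face (Lemma~\ref{lem: dynamics on the faces}(1)) is essential to guarantee that no other $\omega$-limit intercepts the trajectory.
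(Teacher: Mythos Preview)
Your argument is correct and in fact considerably more thorough than the paper's own proof, which consists of a single sentence: one simply observes from Table~\ref{tbl:Eigenv_of_vs} that each of $v_2,v_3,v_6,v_7$ has exactly two positive eigenvalues, hence two outgoing heteroclinic edges, and the authors take this as sufficient for switching without further justification.

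You instead verify the definition in full: you identify $W^u_{\loc}(B)$ with a flow-invariant face $\sigma_i$ ($i\in\{1,2,3,4\}$), invoke the Inclination Lemma to bring any transverse disk $D$ $C^1$-close to that face, and then produce in the face the cusp and sector of initial conditions that shadow each outgoing edge, using the linearisation near $B$ together with continuous dependence along the compact piece of the edge outside $N_B$. Your appeal to Lemma~\ref{lem: dynamics on the faces}(1) is apt in that it rules out interior obstructions in precisely the four faces you need, although strictly speaking continuous dependence on a finite time interval already suffices to carry the linear estimate from $\partial N_B$ into $N_{X_i}$. What your approach buys is an honest verification of the switching definition rather than an appeal to the graph structure; what the paper's one-liner buys is brevity, at the cost of leaving the $\lambda$-lemma step and the cusp geometry implicit.
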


\begin{proof}
The proof follows from observing Table \ref{tbl:Eigenv_of_vs}. At these equilibria there are two positive real eigenvalues ($\Leftrightarrow$ two arrows leave the equilibrium in the corresponding graph).
\end{proof}

\subsection*{Numerics }
We list some numerical evidences, hereafter called by \emph{Facts}, about system~\eqref{eq:poly_rep_5}.  \\

\begin{fact}\label{fact:no_more_invariants}
For $\boldsymbol{\mu} \in \mathcal{I}\backslash\{102\}$, there exists an open $2$-dimensional invariant manifold $\mathcal{M}_{\boldsymbol{\mu}}$ containing $W^{s,u}_2 \left( \mathcal{O}_{\boldsymbol{\mu}} \right)$,  such that $\mathcal{H} \subset \overline{\mathcal{M}_{\boldsymbol{\mu}}}$ and there are no more compact invariant sets in $\inter \left( [0,1]^3 \right)\setminus \mathcal{M}_{\boldsymbol{\mu}}$. \\
\end{fact}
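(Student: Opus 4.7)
The plan is to establish the three assertions in turn: existence and two-dimensionality of $\mathcal{M}_{\boldsymbol{\mu}}$, the inclusion $\mathcal{H}\subset\overline{\mathcal{M}_{\boldsymbol{\mu}}}$, and the absence of further compact invariant sets in $\inter([0,1]^3)\setminus\mathcal{M}_{\boldsymbol{\mu}}$. These are attacked in that order because the third is by far the most delicate.

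For existence I would invoke the stable/unstable manifold theorem at $\mathcal{O}_{\boldsymbol{\mu}}$. By Lemma~\ref{lem:jacobian_eigenv}, the two-dimensional eigenspace $E_2$ associated with the complex pair of $Df_{\boldsymbol{\mu}}(\mathcal{O}_{\boldsymbol{\mu}})$ corresponds to eigenvalues with strictly positive real part for $\boldsymbol{\mu}\in\mathcal{I}_1\cup\mathcal{I}_2$ and strictly negative real part for $\boldsymbol{\mu}\in\mathcal{I}_3$, while the remaining real eigenvalue stays bounded away from $0$ on each $\mathcal{I}_j$. This yields a local $C^r$ two-dimensional invariant leaf tangent to $E_2$ at $\mathcal{O}_{\boldsymbol{\mu}}$, matching $W^{s,u}_2(\mathcal{O}_{\boldsymbol{\mu}})$ near the equilibrium. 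I then define $\mathcal{M}_{\boldsymbol{\mu}}$ as the saturation of this local leaf under the flow $\varphi$ (forward and backward in time); by uniqueness of solutions, $\mathcal{M}_{\boldsymbol{\mu}}$ is an injectively immersed two-dimensional submanifold of $\inter([0,1]^3)$ which is flow-invariant by construction.

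For the inclusion $\mathcal{H}\subset\overline{\mathcal{M}_{\boldsymbol{\mu}}}$, observe that $\overline{\mathcal{M}_{\boldsymbol{\mu}}}$ is compact and flow-invariant, hence so is $\overline{\mathcal{M}_{\boldsymbol{\mu}}}\cap\partial[0,1]^3$. Lemma~\ref{lem: dynamics on the faces} exhausts the compact invariant subsets of $\partial[0,1]^3$ by the vertices, $B_1$, $B_2$, and the two face cycles on $\sigma_5,\sigma_6$. I would follow an outgoing (resp.\ incoming) spiral of $\mathcal{M}_{\boldsymbol{\mu}}$ emanating from $\mathcal{O}_{\boldsymbol{\mu}}$ and, invoking the switching property at $v_2,v_3,v_6,v_7$ from Lemma~\ref{Lem:switching}, argue that the rotation imposed by the complex eigendirections forces the leaf to visit both outgoing connections at every such node. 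An induction along the ordered lists of Lemma~\ref{Lem:het_cycles} then yields that every edge of $\mathcal{H}$ lies in $\overline{\mathcal{M}_{\boldsymbol{\mu}}}$.

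The hard part is the third assertion. Lemma~\ref{lem:int_equilib} already forbids further equilibria in $\inter([0,1]^3)$, so one must exclude periodic orbits, invariant tori, and more exotic recurrent sets located off $\mathcal{M}_{\boldsymbol{\mu}}$. My strategy would be either to construct a smooth function $V:\inter([0,1]^3)\to\RR_0^+$ vanishing exactly on $\mathcal{M}_{\boldsymbol{\mu}}$ whose Lie derivative along $f_{\boldsymbol{\mu}}$ has a definite sign off $\mathcal{M}_{\boldsymbol{\mu}}$ (a Dulac-type multiplier adapted to the pair $(f_{\boldsymbol{\mu}},\mathcal{M}_{\boldsymbol{\mu}})$), or to build a global cross-section transverse to the flow outside $\mathcal{M}_{\boldsymbol{\mu}}$ and reduce the question to a one-dimensional return map amenable to a monotonicity or contraction argument. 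I expect this step to be the principal obstacle, since ruling out all non-trivial recurrence for a generic cubic vector field in three dimensions is notoriously intractable; this is almost certainly why the statement is presented as a numerical fact rather than a lemma.
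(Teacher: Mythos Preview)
Your suspicion at the end is correct and is the entire answer: the paper offers \emph{no proof} of this statement. It is explicitly introduced as a numerical observation --- the sentence immediately preceding Fact~\ref{fact:no_more_invariants} reads ``We list some numerical evidences, hereafter called by \emph{Facts}'' --- and it is used thereafter (e.g.\ in Lemma~\ref{Lem:asymptotic_stable_cycles} and Corollary~\ref{prop:projective_cycles}) as an unproven standing hypothesis. There is therefore nothing to compare your argument against.

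A couple of remarks on your sketch nonetheless. For the first assertion, note that on $\mathcal{I}_1\cup\mathcal{I}_2$ the equilibrium $\mathcal{O}_{\boldsymbol{\mu}}$ is a \emph{source} (all three eigenvalues have positive real part), so the ordinary stable/unstable manifold theorem does not single out a two-dimensional leaf; you need a strong/weak unstable (pseudo-hyperbolic) manifold theorem, which requires a spectral gap between the real eigenvalue and the real part of the complex pair, and even then the globally saturated leaf need not be embedded. For the second assertion, your switching-plus-induction argument does not actually force every edge of $\mathcal{H}$ into the closure: switching at a node says nearby points may follow either outgoing connection, not that a single two-dimensional leaf must accumulate on both. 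Indeed the paper's own Corollary~\ref{prop:projective_cycles} later asserts $\overline{\mathcal{M}_{\boldsymbol{\mu}}}\cap\partial[0,1]^3=\mathcal{H}_i$ for a \emph{single} cycle $\mathcal{H}_i$ (depending on $\boldsymbol{\mu}$), which is in tension with the inclusion $\mathcal{H}\subset\overline{\mathcal{M}_{\boldsymbol{\mu}}}$ stated in Fact~\ref{fact:no_more_invariants}; so the formulation of the Fact is at best loose, and your inductive step cannot close as written. For the third assertion you have correctly identified both the obstacle and the reason the authors did not attempt it.
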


\begin{fact}\label{fact: 1D heteroclinic connection}
For $\boldsymbol{\mu} \in\mathcal{I} $, there are two  one-dimensional heteroclinic connections $[\mathcal{O}_{\boldsymbol{\mu}} \rightarrow B_1]$ and $[\mathcal{O}_{\boldsymbol{\mu}} \rightarrow B_2]$.\\
\end{fact}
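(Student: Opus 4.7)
The plan is to exhibit a one-dimensional invariant curve joining $B_1$, $\mathcal{O}_{\boldsymbol{\mu}}$ and $B_2$; namely, the straight line $\ell$ through $B_1$ and $B_2$. Since Lemma~\ref{lem:line_segment} already places $\mathcal{O}_{\boldsymbol{\mu}}$ on the segment $[B_1 B_2]$, all that is needed is to show that $\ell$ is flow-invariant under~\eqref{eq:poly_rep_5}; the two heteroclinic connections then appear as the two branches of the one-dimensional unstable manifold $W^u_1(\mathcal{O}_{\boldsymbol{\mu}})$ restricted to~$\ell$.

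First, I would parametrise the segment by $p(k) = (1-k)\,B_1 + k\,B_2$ for $k \in [0,1]$, using the explicit coordinates of $B_1$ and $B_2$ from Lemma~\ref{lem:eq_cube_boundary}. Substituting into the polynomial vector field of~\eqref{eq:poly_rep_5}, the aim is to verify the identity
\begin{equation*}
f_{\boldsymbol{\mu}}\bigl(p(k)\bigr) \;=\; \phi(k,\boldsymbol{\mu})\,(B_2 - B_1),
\end{equation*}
for some scalar rational function $\phi$. This is a brute-force symbolic check, best delegated to a computer algebra system given the rational expressions involved; once it holds, $\ell$ is automatically invariant. The three zeros of $\phi(\cdot,\boldsymbol{\mu})$ in $[0,1]$ must then be exactly $k=0$ (giving $B_1$), $k=k^*(\boldsymbol{\mu}) = (850-11\boldsymbol{\mu})/(9\boldsymbol{\mu}-1326)$ (giving $\mathcal{O}_{\boldsymbol{\mu}}$, by the computation inside the proof of Lemma~\ref{lem:line_segment}) and $k=1$ (giving $B_2$), so no spurious equilibria appear on~$\ell$.

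With invariance established, the flow on $\ell$ is a one-dimensional system with exactly these three equilibria. The direction $B_2 - B_1$ tangent to $\ell$ at $\mathcal{O}_{\boldsymbol{\mu}}$ must be an eigenvector of $Df_{\boldsymbol{\mu}}(\mathcal{O}_{\boldsymbol{\mu}})$; since the other two eigenvectors span the invariant plane of the complex pair discussed in Lemma~\ref{lem:jacobian_eigenv}, this tangent vector corresponds to the real eigenvalue. By Figure~\ref{fig:Real_Egvl_and_Imaginary_part_Complex_int_eq_func_mu_2} (left), this real eigenvalue is strictly positive throughout $\mathcal{I}$, so $\mathcal{O}_{\boldsymbol{\mu}}$ is repelling within $\ell$. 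Conversely, the transversal eigenvalues $\tfrac{2550-33\boldsymbol{\mu}}{68}$ at $B_1$ and $\tfrac{15\boldsymbol{\mu}-1632}{17}$ at $B_2$ from Table~\ref{tbl:Eigenv_of_Bs} are strictly negative on $\operatorname{int}(\mathcal{I})$, and these are precisely the eigenvalues along $\ell$ at $B_1$ and $B_2$ (the other two at each $B_j$ belong to the invariant face). Hence $B_1$ and $B_2$ are attracting along $\ell$, and the two open sub-arcs $(\mathcal{O}_{\boldsymbol{\mu}}, B_1)$ and $(\mathcal{O}_{\boldsymbol{\mu}}, B_2)$ are one-dimensional heteroclinic connections.

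The conceptual content is mild — an invariant line forces a one-dimensional reduction — but the genuine obstacle is the algebraic identity for $f_{\boldsymbol{\mu}}(p(k))$. It asks that a very particular line in $[0,1]^3$, whose endpoints and intermediate zero are all functions of $\boldsymbol{\mu}$, be stabilised by the payoff structure $P_{\boldsymbol{\mu}}$ for every $\boldsymbol{\mu}\in\mathcal{I}$ simultaneously. A closed-form verification requires handling the polynomial identity in two variables $(k,\boldsymbol{\mu})$ at once, which is why the authors present it as a numerical fact rather than a lemma; a symbolic proof is feasible but not enlightening, and could be automated in the \emph{Mathematica} code mentioned in Section~\ref{sec:software_code}.
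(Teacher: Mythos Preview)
Your strategy hinges entirely on the segment $[B_1B_2]$ being flow-invariant, but it is not. A direct substitution of the parametrisation $p(k)=B_1+k(B_2-B_1)$ into the second equation of~\eqref{eq:poly_rep_5} gives
\[
60-102\,x(k)-27\,z(k)=60-3(11+9k)-27(1-k)=0,
\]
so $\dot y\equiv 0$ along the whole segment. (One also checks that the first bracket vanishes identically, so $\dot x\equiv 0$ as well.) However $\dot z$ does \emph{not} vanish except at the three equilibria: with your earlier computation one finds
\[
\dot z = k(1-k)\,\frac{33\boldsymbol{\mu}-2550+k(27\boldsymbol{\mu}-3978)}{68},
\]
which is generically nonzero on $(0,1)$ for $\boldsymbol{\mu}\in\mathcal{I}$. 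Thus at a generic point of the segment the vector field is $(0,0,\dot z)$, which is \emph{not} parallel to $B_2-B_1=\bigl(\tfrac{9}{34},\tfrac{9\boldsymbol{\mu}-1224}{5372},-1\bigr)$ because the latter has nonzero $x$- and $y$-components for every $\boldsymbol{\mu}\in\mathcal{I}$. The flow therefore leaves the line immediately, and the identity $f_{\boldsymbol{\mu}}(p(k))=\phi(k,\boldsymbol{\mu})(B_2-B_1)$ you set out to verify is simply false. Consistently, the transversal eigendirections at $B_1$ and $B_2$ (the ones carrying the eigenvalues $\tfrac{2550-33\boldsymbol{\mu}}{68}$ and $\tfrac{15\boldsymbol{\mu}-1632}{17}$) are orthogonal to the faces $\sigma_5$ and $\sigma_6$, i.e.\ along the $z$-axis, not along $B_2-B_1$.

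For comparison: the paper does not prove this statement at all. It is explicitly listed under ``Numerics'' as a \emph{Fact}, i.e.\ a numerical observation; the connections $[\mathcal{O}_{\boldsymbol{\mu}}\to B_i]$ are genuine curves, not the straight segment, and their existence is asserted on the basis of simulation rather than an analytic argument. So while your instinct to look at the line through $B_1,\mathcal{O}_{\boldsymbol{\mu}},B_2$ is natural given Lemma~\ref{lem:line_segment}, that collinearity is an algebraic coincidence of the equilibrium \emph{positions} and does not translate into invariance of the line under the flow.
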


It is possible to observe numerically that  $\mathcal{M}_{\boldsymbol{\mu}}$ of Fact \ref{fact:no_more_invariants}:
\begin{itemize}
\item coincides with $W^u_2(\mathcal{O}_{\boldsymbol{\mu}} )$ for $\boldsymbol{\mu} \in \mathcal{I}_1$;
\item contains $\{\mathcal{O}_{\boldsymbol{\mu}}\} \cup W^s (\mathcal{C}_{\boldsymbol{\mu}} )$ for $\boldsymbol{\mu} \in \mathcal{I}_2$, where $\mathcal{C}_{\boldsymbol{\mu}} $ is the periodic solution  associated to the Hopf Bifurcation described in Lemma \ref{lem:jacobian_eigenv};
\item coincides with $W^s(\mathcal{O}_{\boldsymbol{\mu}} )$ for $\boldsymbol{\mu} \in \mathcal{I}_3$.
\end{itemize}
We do not explore  the dynamics within the surface $\mathcal{M}_{\boldsymbol{\mu}}$ because it will not be used in the sequel. \\ 

\subsection{Stability of $\mathcal{H}$}
The next result asserts that the   network  $\mathcal{H}$ is globally asymptotically stable in  $[0,1]^3 \backslash\{ \mathcal{M}_{\boldsymbol{\mu}}, \left[\mathcal{O}_{\boldsymbol{\mu}}\to B_1\right], \left[\mathcal{O}_{\boldsymbol{\mu}}\to B_2\right] \}$, for  $\boldsymbol{\mu}\in\mathcal{I}_1$. \\

\begin{lem}
\label{Lem:asymptotic_stable_cycles}
For $\boldsymbol{\mu}\in\mathcal{I}_1$, $\mathcal{B}(\mathcal{H})= [0,1]^3\backslash\{ \mathcal{M}_{\boldsymbol{\mu}}, \left[\mathcal{O}_{\boldsymbol{\mu}}\to B_1\right], \left[\mathcal{O}_{\boldsymbol{\mu}}\to B_2\right] \}$.
\end{lem}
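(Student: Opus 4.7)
The plan is to combine the boundary dynamics of Lemma~\ref{lem: dynamics on the faces} with the interior classification given by Fact~\ref{fact:no_more_invariants} to pin down every $\omega$-limit inside $[0,1]^3$. Compactness of the cube ensures $\omega(p)$ is always non-empty, compact, connected and flow-invariant, so it suffices to classify the possible compact connected invariant sets.

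For the ``excluded set'' part of the equality, I would first dispose of the three pieces directly: on each connection $[\mathcal{O}_{\boldsymbol{\mu}}\to B_j]$ one has $\omega(p)=\{B_j\}\not\subset\mathcal{H}$ by definition of the connection, and $\mathcal{O}_{\boldsymbol{\mu}}\in\mathcal{M}_{\boldsymbol{\mu}}$ satisfies $\omega(\mathcal{O}_{\boldsymbol{\mu}})=\{\mathcal{O}_{\boldsymbol{\mu}}\}\not\subset\mathcal{H}$, so none of the three sets is contained in $\mathcal{B}(\mathcal{H})$.

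For the main inclusion, fix $p$ outside the excluded set. By Fact~\ref{fact:no_more_invariants}, every compact invariant subset of $[0,1]^3$ lies in $\mathcal{M}_{\boldsymbol{\mu}}\cup\partial[0,1]^3$. Since $\boldsymbol{\mu}\in\mathcal{I}_1$, Lemma~\ref{lem:jacobian_eigenv} and the terminology in Section~\ref{ss:interior} give that $\mathcal{O}_{\boldsymbol{\mu}}$ is a source and $\mathcal{M}_{\boldsymbol{\mu}}=W^{u}_2(\mathcal{O}_{\boldsymbol{\mu}})$; the restricted flow on $\mathcal{M}_{\boldsymbol{\mu}}$ is two-dimensional, has $\mathcal{O}_{\boldsymbol{\mu}}$ as its unique interior equilibrium, contains no periodic orbit (the orbit $\mathcal{C}_{\boldsymbol{\mu}}$ only appears in $\mathcal{I}_2$, cf.\ the bullet list after Fact~\ref{fact: 1D heteroclinic connection}), and has topological boundary contained in $\mathcal{H}$. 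A Poincar\'e--Bendixson argument on $\mathcal{M}_{\boldsymbol{\mu}}$ then forces its compact invariant subsets to be $\{\mathcal{O}_{\boldsymbol{\mu}}\}$ or subsets of $\mathcal{H}$. On $\partial[0,1]^3$, Lemma~\ref{lem: dynamics on the faces} together with Tables~\ref{tbl:Eigenv_of_vs}--\ref{tbl:Eigenv_of_Bs} identifies $\{B_1\}$ and $\{B_2\}$ as the only additional compact invariant singletons, all other compact invariant sets on the boundary being contained in $\mathcal{H}$. Hence the connected candidates for $\omega(p)$ reduce to $\{\mathcal{O}_{\boldsymbol{\mu}}\}$, $\{B_1\}$, $\{B_2\}$ or a subset of $\mathcal{H}$. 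The first is ruled out because the stable set of the source is $\{\mathcal{O}_{\boldsymbol{\mu}}\}\subset\mathcal{M}_{\boldsymbol{\mu}}$, which $p$ avoids; the second and third because Fact~\ref{fact: 1D heteroclinic connection} identifies $W^{s}(B_j)\cap\inter[0,1]^3$ with $[\mathcal{O}_{\boldsymbol{\mu}}\to B_j]$, also avoided by $p$. The only remaining alternative yields $\omega(p)\subset\mathcal{H}$, i.e.\ $p\in\mathcal{B}(\mathcal{H})$.

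The main technical obstacle I anticipate is the Poincar\'e--Bendixson step on $\mathcal{M}_{\boldsymbol{\mu}}$, since the latter is an open $2$-manifold embedded in $\RR^3$ rather than a planar region. I would handle it by identifying, via a local chart on a tubular neighbourhood of $\mathcal{M}_{\boldsymbol{\mu}}\cup\mathcal{H}$, the set $\mathcal{M}_{\boldsymbol{\mu}}\cup\mathcal{H}$ with a closed topological disk carrying a smooth flow having a single source at the image of $\mathcal{O}_{\boldsymbol{\mu}}$ and the cycles of $\mathcal{H}$ on the boundary circle; the classical planar Poincar\'e--Bendixson theorem then gives the stated classification and forces every non-equilibrium orbit of $\mathcal{M}_{\boldsymbol{\mu}}$ to be forward-asymptotic to $\mathcal{H}$, closing the argument.
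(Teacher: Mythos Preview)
Your approach is essentially the same as the paper's, only considerably more careful. The paper's proof is three lines: it invokes Fact~\ref{fact:no_more_invariants} to assert that $\omega(u_0)$ lies on $\partial[0,1]^3$ (glossing over the possibility $\omega(u_0)\subset\mathcal{M}_{\boldsymbol{\mu}}$ that you handle via Poincar\'e--Bendixson), and then uses that $B_1,B_2$ are sources on their faces (Lemma~\ref{lem:B_1 and B_2 stability}) to conclude $\omega(u_0)\subset\mathcal{H}$. Your extra Poincar\'e--Bendixson step on $\mathcal{M}_{\boldsymbol{\mu}}$ is a genuine refinement of an implicit gap in the paper's argument, and your explicit case analysis ruling out $\{\mathcal{O}_{\boldsymbol{\mu}}\},\{B_1\},\{B_2\}$ is more transparent than the paper's one-line dismissal.

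One point to flag: the paper only proves the inclusion $[0,1]^3\setminus\{\ldots\}\subset\mathcal{B}(\mathcal{H})$ and does not address the reverse. Your attempt at the reverse inclusion is incomplete --- you check only that $\mathcal{O}_{\boldsymbol{\mu}}\in\mathcal{M}_{\boldsymbol{\mu}}$ fails to lie in $\mathcal{B}(\mathcal{H})$ and then conclude ``none of the three sets is contained in $\mathcal{B}(\mathcal{H})$'', which does not follow. In fact your own Poincar\'e--Bendixson paragraph shows that every non-equilibrium orbit on $\mathcal{M}_{\boldsymbol{\mu}}$ \emph{does} converge to $\mathcal{H}$ when $\boldsymbol{\mu}\in\mathcal{I}_1$, so $\mathcal{M}_{\boldsymbol{\mu}}\setminus\{\mathcal{O}_{\boldsymbol{\mu}}\}\subset\mathcal{B}(\mathcal{H})$ and the equality as written is not literally true. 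This is an imprecision in the paper's statement rather than an error in your reasoning; the substantive content (global attraction to $\mathcal{H}$ off the named exceptional sets) is what both proofs establish.
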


\begin{proof}
 If $u_0\in \inter([0,1]^3) \backslash\{ \mathcal{M}_{\boldsymbol{\mu}}, \left[\mathcal{O}_{\boldsymbol{\mu}}\to B_1\right], \left[\mathcal{O}_{\boldsymbol{\mu}}\to B_2\right] \}$,  then $\varphi(t,u_0)$  accumulates on a compact invariant set. Since  there are no more invariant sets in $\inter \left( [0,1]^3 \right)\setminus \mathcal{M}_{\boldsymbol{\mu}}$ (Fact \ref{fact:no_more_invariants}), then $\varphi(t,u_0)$  accumulates on the boundary's cube. Since the equilibria  $B_1$ and $B_2$ are sources in the corresponding faces (Lemma \ref{lem:B_1 and B_2 stability}), the result follows. \\
 \end{proof}

\bigbreak

 \subsection{Questions}
At the moment, motivated by numerical simulations, there are questions that are worth to be answered concerning the dynamics of ~\eqref{eq:poly_rep_5}. \\

\begin{description}
\item[\textbf{1st}]  For $\boldsymbol{\mu} \in \mathcal{I}_1$, the network  $\mathcal{H}$ is globally asymptotically stable  in $[0,1]^3\backslash\{ \mathcal{M}_{\boldsymbol{\mu}}, \left[\mathcal{O}_{\boldsymbol{\mu}}\to B_1\right], \left[\mathcal{O}_{\boldsymbol{\mu}}\to B_2\right] \}$.  
What is the likely limit set of $ \mathcal{H}$? In other words, is there some preferred cycle to where Lebesgue-almost all solutions are attracted? \\

\item[\textbf{2nd}] For $\boldsymbol{\mu} \in \mathcal{I}_2\cup  \mathcal{I}_3 $, the network $\mathcal{H}$ is not asymptotically stable, Lebesgue-almost all points in $ [0,1]^3\backslash \mathcal{M}_{\boldsymbol{\mu}}$ are attracted to $B_1$ and $B_2$, and $\overline{\mathcal{M}_{\boldsymbol{\mu}}}$ seems to accumulate on a cycle of $ \mathcal{H}$. Could we describe which one? \\

\end{description}

In the following sections, we develop a general method to answer the previous  questions. Although we describe  a  technique implemented to model described in Section \ref{sec:model}, the (affirmative) answers to the questions  are given as a series of results that are applicable to other networks  of polymatrix replicators  and to more general types of networks.

\section{Asymptotic dynamics: the theory}
\label{sec:asym_dyn}

We describe a piecewise linear model from where we may analyse the dynamics associated to the asymptotic dynamics near the heteroclinic network $\mathcal{H}$ of Lemma \ref{Lem:het_cycles}. This piecewise linear map is easily computed.
Here, we study the system \eqref{eq:poly_rep_4} bearing in mind that it is equivalent to \eqref{eq:poly_rep_5}, as observed at the end of Section \ref{sec:model}. The extension of the theory to other attracting networks is straightforward.

\begin{figure}[h]
\includegraphics[width=10cm]{./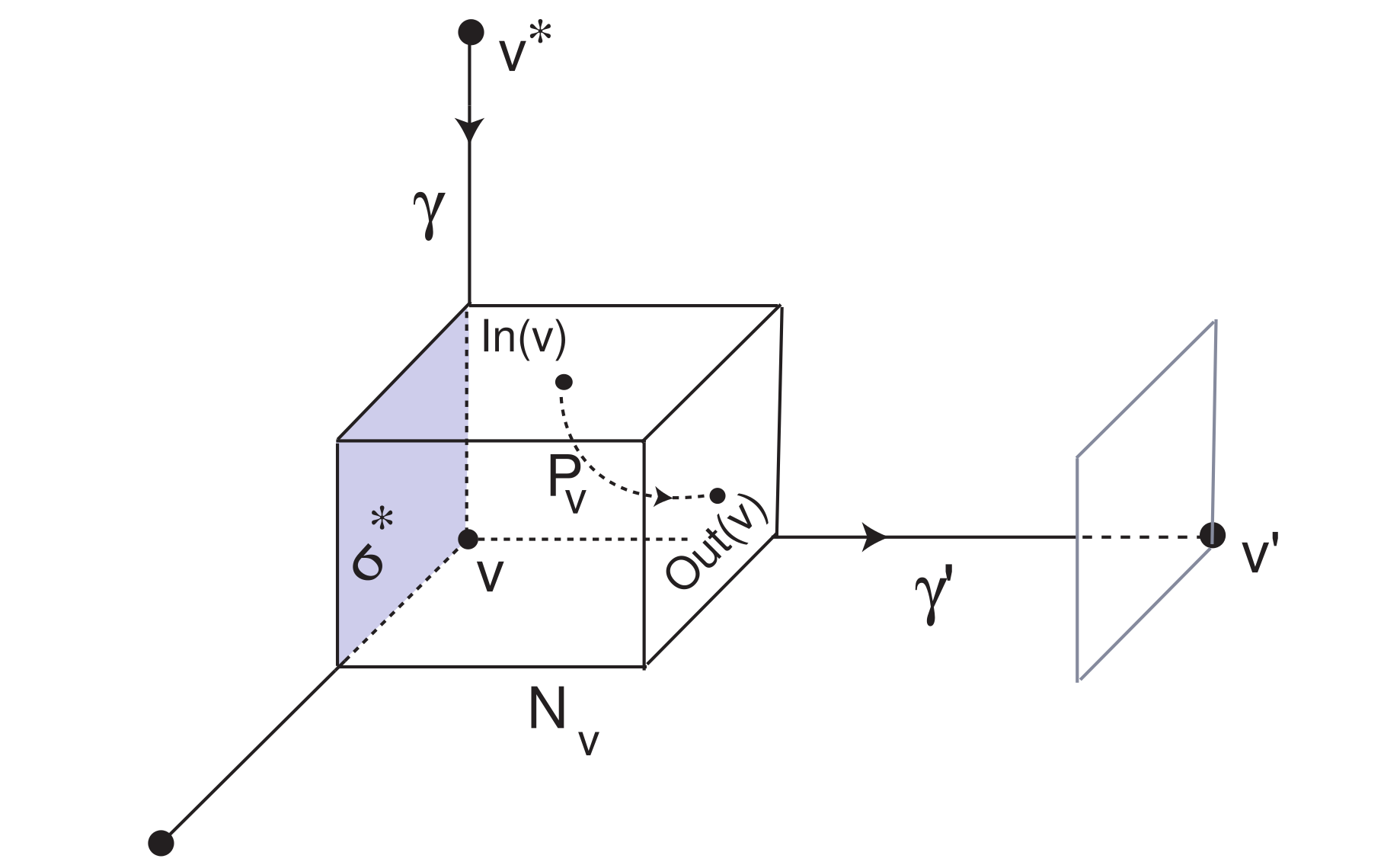}
\caption{\small Illustration of $N_v$ and the \emph{local map} $P_v:In(v)\rightarrow Out(v)$. The face $\sigma_\ast$ is orthogonal to $\gamma'$ at $v$ . } 
\label{fig:Terminology1}
\end{figure}

\subsection{Non-resonance hypothesis}\label{subsec:terminology}
Let $\mathcal{H}\subset \Gamma_{(2,2,2)}\subset \RR^6 $ be a heteroclinic network associated to the set of hyperbolic saddles $\mathcal{V}=\{v_1, ..., v_8\}$ and one-dimensional heteroclinic connections $\mathcal{E}$. 
\medbreak

Given $v=(x_1, x_2, x_3, x_4, x_5, x_6) \in \mathcal{V}$, we denote by $\mathcal{F}_v$ the set of three faces $\sigma_j$ with $j\in\{1, ...,6\}$, for which the component $x_j$ of $v$ are zero. Geometrically, this means that for each $v\in\mathcal{V}$, $\mathcal{F}_v$ is the set of the three faces  whose intersection is $v$. All saddles lying in $\mathcal{V}$ are of saddle-type and hyperbolic (cf. Table \ref{tbl:Eigenv_of_vs}). From now on, we assume the following technical condition:
\bigbreak

\textbf{(TH)} For each $v\in \mathcal{V}$ the eigenvalues of $Df_{\boldsymbol{\mu}} (v)$ are non-resonant in the terminology of Ruelle \cite{ruelle2014elements}:\footnote{This hypothesis is equivalent to the Condition (c) of Definition 3.1 of \cite{alishah2019asymptotic}.}  
$$
Re(\lambda_i) = Re(\lambda_j) + Re(\lambda_k),
$$
where $Re(\lambda)$ denotes the real part of $\lambda\in \mathbb{C}$ and $\lambda_i, \lambda_j$ and $\lambda_k$ are the eigenvalues of the linear part of the vector field  \eqref{eq:poly_rep_4} evaluated at the equilibrium $v\in \mathcal{V}$.
\bigbreak
The necessary and sufficient conditions for $C^1$--linearization of Ruelle show that linearization is not possible for subsets of points on the lines described by the restrictions above. These restrictions correspond to a set of zero Lebesgue measure in parameter space and place no serious constraint on the analysis that follows.

 \subsection{$C^1$--Linearization and global map}
 \label{ss: global map}
Since  $v\in \mathcal{V}$ is hyperbolic, assuming the non-resonance condition \textbf{(TH)} of ${Df_{\boldsymbol{\mu}}}(v)$, it is
possible to define an open cubic neighbourhood of $v$,  $N_v $, such that the flow associated to  \eqref{eq:poly_rep_4}  is $C^1$--conjugated to that of $\dot{x}={Df_{\boldsymbol{\mu}}}(v)(x-v)$, $x\in \RR^6$.
  In particular, it is possible to define two cross sections, $In(v)\subset \overline{N_v}$ and $Out(v)\subset \overline{N_v}$,
such that solutions  starting in  $In(v)\setminus W^s(v)$ enter in $ {N}_v$ in positive time,  spend some time there and leaves the cube through $Out(v)$ -- see Figure \ref{fig:Terminology1}. It induces the local diffeomorphism:
$$
P_v\,:\, In(v)\setminus W^s(v) \to Out(v).
$$

Using local adapted coordinates associated to system  \eqref{eq:poly_rep_4}, the cubic neighbourhood $N_v$ may be defined by: 
\begin{equation}
\label{Nv_def}
N_v :=\{p \in \Gamma_{(2,2,2)}: 0\ < x_j(p)< 1 \qquad \text{for} \qquad 1\leq j \leq 6\}
\end{equation}
where $(x_1,x_2, x_3,x_4,x_5, x_6)$ is a system of linear coordinates around $v$ which assigns coordinates $(0, 0, 0,0, 0, 0)$  to $v$.

\bigbreak
For $  v^*, v \in \mathcal{V}$, given a one-dimensional heteroclinic connection of the type $\gamma:=[v^* \rightarrow v]$, we may also define an invertible  map from a small neighbourhood of $Out(v^*)\cap \gamma$ to $In(v)\cap \gamma$, that is called the \emph{global map} and will be denoted by $P_{\gamma}$. This map is a diffeomorphism \cite[Ch. 2]{palis1982local} and is depicted in Figure \ref{fig:Terminology2}.
\bigbreak
 Let $\mathcal{T}_\varepsilon$ a tubular neighbourhood of $\mathcal{H}$. It can be written as the ``\emph{system of connected pipes}'':
\begin{equation}
\label{tubular_neigh1}
\mathcal{T}_\varepsilon= \left(\bigcup_{v\in \mathcal{V}}N_v\right)  \bigcup \left(\bigcup_{\gamma \in \mathcal{E}}N_\gamma\right) 
\end{equation}
where:
\begin{itemize}
\item  $N_v$ is the neighbourhood of $v$ (see \eqref{Nv_def});
\item $N_\gamma$ is the tubular neighbourhood of $\gamma\in \mathcal{E}$ (of radius $\varepsilon$) defined by:
\begin{eqnarray*}
N_\gamma= \{q\in \Gamma_{(2,2,2)}\backslash (N_{v^*}\cup N_v) &:& x_j \leq \varepsilon \quad \text{for all }\,  j\,  \text{such that}\,  \gamma\subset \sigma_j \quad \text{and}\\
 &&    x_j =0\quad \text{for all }\,  j\,  \text{such that}\,   \sigma_j \not\subset \gamma \}\\
\end{eqnarray*} 
 \end{itemize}

\begin{rem}
In order to define correctly the set $\mathcal{T}_\varepsilon$ we might need to shrink either the cubic neighbourhoods of the saddles or the tubular neighbourhoods of the connections. This is possible by decreasing $\varepsilon$ finitely many times (if necessary). 
\end{rem}

 \begin{figure}[h]
\includegraphics[width=12cm]{./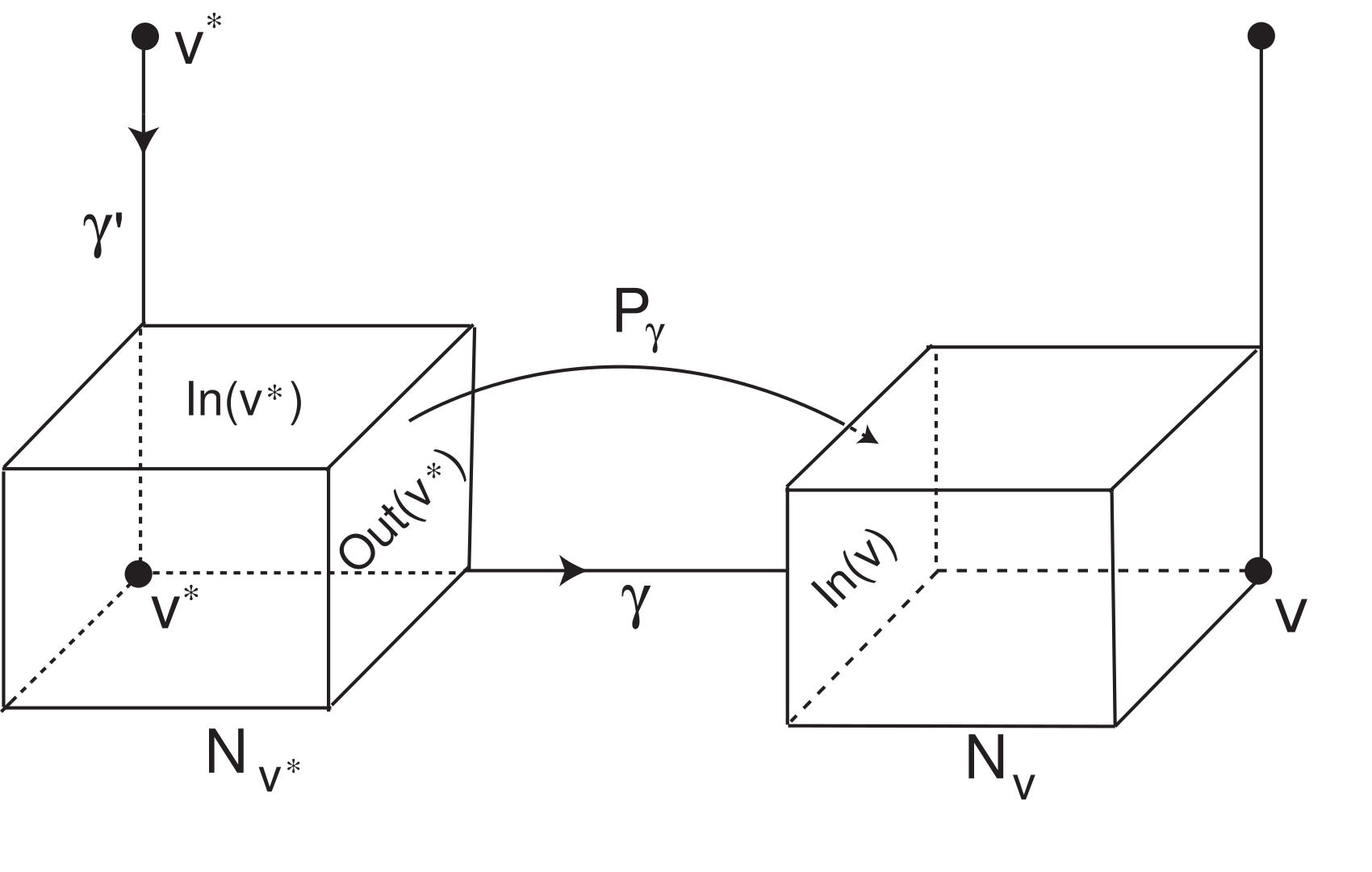}
\caption{\small Sketch of the \emph{global map} $P_{\gamma}: Out(v^*) \rightarrow In(v)$. } 
\label{fig:Terminology2}
\end{figure}

 \subsection{Quasi-change of coordinates}\label{subsec:dual_cone}

We introduce now the stage where the asymptotic piecewise linear dynamics play its role. This space is a subset of $\Rr^6$ and may be seen as a finite union of subsets of $\RR_+^6$, each one called by \emph{sector}.

We describe a rescaling change of coordinates $\Psi_\varepsilon$, depending on the parameter $\varepsilon>0$.
Since the tubular neighbourhood $\mathcal{T}_\varepsilon$ may be written as in \eqref{tubular_neigh1}, the map $\Psi_\varepsilon$ acts in different ways according to the point $q$ lies on $N_v$ or in $N_\gamma$, where $v\in \mathcal{V}$, $\gamma\in \mathcal{E}$. The variable $\varepsilon$ plays the role of \emph{blow-up} parameter as we proceed to explain. The examples are related with  system \eqref{eq:poly_rep_4} and the index $j$ runs over the set $\{1, ..., 6\}$.

\subsubsection{Action of $\Psi_\varepsilon$ on $N_v$}
In the first case, if $q\in N_v $, the rescaling change of coordinates $\Psi_\varepsilon$ takes points $q= (x_1, x_2, x_3, x_4, x_5, x_6)$  to points in the sector $\{(y_j)_{\sigma_j \in \mathcal{F}}\}$ according to the law: \\
\begin{itemize}
\item   $y_j= -\varepsilon^2 \log x_j(q) \geq 0$ if the face $\sigma_j$  contains $v$  {(for all $\sigma_j\in \mathcal{F}_v$)}; \\
\item    $y_j = 0$ if  the face $\sigma_j$ does not contain $v$  {(for all $\sigma_j\notin \mathcal{F}_v$)}.\\
\end{itemize}

\textbf{Example:} 
Assume we have enumerated $\mathcal{F}_v$ so that the faces through $v=(1, 0, 1, 0, 1,0)$ are precisely
$\sigma_2, \sigma_4, \sigma_6$. The map $\Psi_\varepsilon$ is defined on the neighbourhood $N_v\backslash  \Gamma_{(2,2,2)}$ by
$$
\Psi_\varepsilon(q)= (0,-\varepsilon^2 \log x_2(q), 0, -\varepsilon^2 \log x_4(q),0,  -\varepsilon^2 \log x_6(q))
$$
where $(x_1, x_2, x_3, x_4, x_5, x_6)$ stands for the system of affine coordinates introduced above. \\

\textbf{Notation:} $\Pi_v:=\Psi_\varepsilon(N_v) =\{(u_j)_j \in \RR_+^6: u_j=0, \quad \forall \sigma_j \notin \mathcal{F}_v\}$  is well defined as a 3-dimensional subset of $(\RR^+_0)^6$.

\subsubsection{Action of $\Psi_\varepsilon$ on $N_\gamma$}
Similarly, given an edge $\gamma=[v^* \rightarrow v]$, the map $\Psi_\varepsilon$ takes points in the neighbourhood $N_\gamma$ of $\gamma$ to points in the sector $\{(y_j)_{\sigma_j\in \mathcal{F}} \}$ such that:\\
\begin{itemize}
\item   $y_j= -\varepsilon^2 \log x_j(q) \geq 0$ if the face $\sigma_j$  contains $\gamma$; \\
\item   $y_j=0$ if the face $\sigma_j$ does not contain $\gamma$. \\
\end{itemize}

\bigbreak
\textbf{Example:} 
For $\gamma_5=[v_1\rightarrow v_3]$ we know that $\gamma_5=\sigma_2 \cap \sigma_6$. If $q\in N_\gamma\backslash \Gamma_{(2,2,2)}$, then  the expression of the map $\Psi_\varepsilon$ is  is given by:
$$
\Psi_\varepsilon(q)= (0,-\varepsilon^2 \log x_2(q), 0,  0, 0,  -\varepsilon^2 \log x_6(q)),
$$
where $(x_1, x_2, x_3, x_4, x_5, x_6)$ stands for the system of affine coordinates introduced above. \\

\textbf{Notation:} $\Pi_\gamma:=\Psi_\varepsilon({N_\gamma})=\Pi_{v^*}\, \cap\,  \Pi_v$  is a 2-dimensional subset of $(\RR^+_0)^6$. 

\begin{rem}
 Observe  that 
 \begin{equation}
 \label{rank1}
 rank(\Psi_\varepsilon({N_v}))=3 \qquad \text{and} \qquad rank(\Psi_\varepsilon({N_\gamma}))=2.
 \end{equation} 
 In particular, the map $\Psi_\varepsilon $ is not injective when restricted to  $N_\gamma$.  We know precisely how the loss of injectivity  is performed; the map $\Psi_\varepsilon|_{N_\gamma}$ identify all points in the same trajectory on $\Gamma_{(2,2,2)}$.  This loss of injectivity will not affect the validity of our results. This is why we say that the map is $\Psi_\varepsilon$ is a \emph{quasi-change of coordinates}.
\end{rem}

\begin{defn}
The \emph{dual cone associated to the network $\mathcal{H}$} is given by $\bigcup_{v\in \mathcal{V}}\Pi_v$. 
\end{defn}
The map $\Psi_\varepsilon$ is not well defined in  $\partial \Gamma_{(2,2,2)}$. When a trajectory is approaching the network $\mathcal{H}$, the non-zero coordinates of its image under $\Psi_\varepsilon $ go to $\infty$ in the dual cone. This is why we say that $\varepsilon>0$ plays the role of blow-up parameter.

 \bigbreak

\subsection{Skeleton character at an equilibrium}
\label{ss:skeleton}
For $v\in \mathcal{V}$, the main result of this subsection relates the asymptotic dynamics of $(\Psi_{\varepsilon})_* f$, the \emph{push-forward} of $f$ by $\Psi_{\varepsilon}$  (restricted to $N_v$), with a constant vector field on the dual cone. We omit the dependence of $f$ on $\boldsymbol{\mu}\in \mathcal{I}$ to lighten the notation. Let us see the definition of this constant vector field:

\begin{defn}\label{def:character_1}
For a given $v\in \mathcal{V}$, we define the  map $\chi^v$  as:
\begin{equation} 
\label{def:caracter}
\chi^v_j=\left \{ \begin{array}{l}
- \,\text{eigenvalue of }Df  (v) \text{ in the orthogonal direction to $\sigma_j$}, \,\, \text{if}\,\, \sigma_j \in \mathcal{F}_v  \\[2mm]
0, \,\, \text{otherwise}
\end{array} \right. ,
\end{equation}
where $j\in \{1, ..., 6\}$ is the component of the vector. 
 For an equilibrium $v\in \mathcal{V}$, the vector field $\chi^v=(\chi_j^v)_{j\in \{1, ..., 6\}}$ is called the \emph{skeleton character at $v$}. Note that for each $v\in \mathcal{V}$, three components of this map are zero. 
\end{defn}

The next result asserts that the vector field $(\Psi_{\varepsilon})_* f $ rescaled by the factor $\varepsilon^{-2}$ converges to the constant vector field $\chi^v$ on the subspace $\Pi_v$. In particular the trajectories associated to the push-forward vector field  $\varepsilon^{-2}(\Psi_{\varepsilon})_\ast f$   are asymptotically linearized to lines \emph{i.e.} there exists $T>0$ such that the solution with initial condition $y\in \Pi_v$ is the  segment defined by $y+t\chi^v $, $t\in [0, T]$, $y\in \Pi_v$.
 
\bigbreak
In order to be precise in the results' statement, we introduce the following definition.
 \begin{defn}
 For $\tau>0$, let $(F_\lambda)_{\lambda\in [0,\tau]}$ be  a one-parameter family of maps  defined on $\mathcal{D}\subset (\RR_0^+)^6$, and $F$ be another function with the same domain.   We say that 
 $  F_\lambda$ \emph{converges in the $C^1$--topology to} $F$,  as $\lambda$ tends to $0$, and we write
 $$
 \lim_{\lambda\rightarrow 0}F_\lambda=F,
 $$ 
  to mean that for every compact set $K\subset \mathcal{D}$, the following equality holds:
  $$\lim_{\lambda \rightarrow 0^+} \max \left\{        \sup_{u\in K} [F_\lambda(u)-F(u)] ,  \, \,        \sup_{u\in K}  D   [F_\lambda(u)-F(u)]      \right\}=0,$$
  where $D$ denotes the usual first order \emph{Fr\'echet derivative}.
 \end{defn}
 
 \bigbreak If a map is the composition of finitely many maps, the domain should be understood as the domain where the composition is well defined. From now on, let us define (in the dual cone): 
 \begin{eqnarray*}
 \Pi_v(\varepsilon) &=&\{y \in \Pi_v: \quad y_j \geq \varepsilon, \qquad \forall \sigma_j\in \mathcal{F}_v\},\\
 \Pi_\gamma(\varepsilon) &=&\{y \in \Pi_\gamma : \quad y_j \geq \varepsilon, \qquad \forall \sigma_j\in \mathcal{F}_{v^\star}\cap \mathcal{F}_v\}.
 \end{eqnarray*} 

We omit the dependence on $\varepsilon$ of $ \Pi_v(\varepsilon)$ and $ \Pi_\gamma(\varepsilon)$ to lighten the reading.

\medbreak
 
In order to get an approximation of Lemmas \ref{prop:limit} and \ref{global in the dual} in topology $C^r$, $r>1$, we might need to rescale the radius of $\mathcal{T}_\varepsilon$ defined in \eqref{tubular_neigh1}. This is not necessary to the scope of the present work since  conclusions on stability of cycles hold in the $C^1$--topology. We now state the main result of this Subsection.
 
\bigbreak
 
\begin{lem}~\cite[Lemma 5.6]{alishah2019asymptotic}\label{prop:limit}
The following equality holds for $v\in \mathcal{V}$:
$$
 \lim_{\varepsilon\rightarrow 0} \varepsilon^{-2} (\Psi_\varepsilon)_\ast f \, |_{\Pi_v (\varepsilon) }=\chi^v.
 $$ 
\end{lem}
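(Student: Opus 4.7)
The plan is to push forward $f$ via $\Psi_\varepsilon$, compute the result componentwise on $\Pi_v$, and pass to the limit $\varepsilon \to 0^+$. The non-resonance hypothesis \textbf{(TH)} is used only to provide $C^1$-linearising coordinates on $N_v$, after which the calculation is essentially algebraic.

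First I would set up the local coordinate framework. By \textbf{(TH)} and Ruelle's $C^1$-linearisation theorem, there are adapted coordinates $(x_1,\ldots,x_6)$ on $N_v$ in which $v=0$ and, for every $\sigma_j\in\mathcal{F}_v$, the $j$-th component of $f$ factors as $f_j(x) = x_j\,g_j(x)$ with $g_j(v)=\lambda_j^v$; for $\sigma_j\notin\mathcal{F}_v$ the coordinate $x_j$ is determined by the simplex constraint and contributes nothing transverse. For the polymatrix replicator the factorisation $f_j = x_j\,g_j$ is in fact immediate from \eqref{eq:poly_rep_4}.

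Second I would compute $(\Psi_\varepsilon)_\ast f$ componentwise. The Jacobian $D\Psi_\varepsilon$ is diagonal with entries $(D\Psi_\varepsilon)_{jj}=-\varepsilon^2/x_j$ for $\sigma_j\in\mathcal{F}_v$ and $0$ otherwise, so that for $\sigma_j\in\mathcal{F}_v$,
\begin{equation*}
\bigl[\varepsilon^{-2}(\Psi_\varepsilon)_\ast f\bigr]_j(y) \,=\, -\varepsilon^{-2}\,\frac{\varepsilon^2}{x_j}\,\bigl(x_j\,g_j(x)\bigr) \,=\, -g_j\bigl(\Psi_\varepsilon^{-1}(y)\bigr),
\end{equation*}
while the remaining components vanish identically, in agreement with $\chi_j^v=0$ by Definition \ref{def:character_1}.

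Third I would take the limit uniformly on compact subsets $K\subset\Pi_v$. The restriction to $\Pi_v(\varepsilon)$ enforces $y_j\geq\varepsilon$, whence
\begin{equation*}
x_j \,=\, e^{-y_j/\varepsilon^2} \,\leq\, e^{-1/\varepsilon} \,\xrightarrow[\varepsilon\to 0^+]{}\, 0,
\end{equation*}
so $\Psi_\varepsilon^{-1}(y)\to v$ uniformly on $K$. Continuity of $g_j$ then yields $-g_j(\Psi_\varepsilon^{-1}(y))\to -g_j(v)=-\lambda_j^v=\chi_j^v$ uniformly on $K$. To upgrade to $C^1$ convergence one differentiates the explicit expression for $[\varepsilon^{-2}(\Psi_\varepsilon)_\ast f]_j$ in $y_k$; by the chain rule the resulting derivative carries the factor $x_k/\varepsilon^2\leq \varepsilon^{-2}e^{-1/\varepsilon}$, which tends to $0$ faster than any power of $\varepsilon$, so all first derivatives of the difference $\varepsilon^{-2}(\Psi_\varepsilon)_\ast f - \chi^v$ vanish uniformly on $K$.

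The main obstacle is securing the uniformity, which is precisely why the domain must be truncated to $\Pi_v(\varepsilon)$: without the lower bound $y_j\geq\varepsilon$, the exponential estimate $x_j\leq e^{-1/\varepsilon}$ collapses, $\Psi_\varepsilon^{-1}(y)$ need not approach $v$, and the limit fails. Once the truncation is in place the entire argument reduces to a direct componentwise computation combined with the super-exponential decay $e^{-1/\varepsilon}$.
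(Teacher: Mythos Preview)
Your argument is correct: the push-forward computation reduces each nontrivial component to $-g_j(\Psi_\varepsilon^{-1}(y))$, and the truncation to $\Pi_v(\varepsilon)$ forces the super-exponential bound $x_j\le e^{-1/\varepsilon}$ that drives both the $C^0$ and the $C^1$ limits. The paper does not supply its own proof of this lemma; it is simply quoted from \cite[Lemma~5.6]{alishah2019asymptotic}, so there is no in-paper argument to compare against.
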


\subsection{Global map ``viewed'' in the dual cone}
\label{ss:5.5}
 For $v_*, v  \in \mathcal{V}$ and  $\gamma:= [v^* \rightarrow v]$, let  $$P_\gamma:Out(v_*)\rightarrow In(v)$$ be the diffeomorphism defined in Subsection \ref{ss: global map}. Define the map:
$$
H^\varepsilon :=  \Psi_\varepsilon \circ P_\gamma \circ  (\Psi_\varepsilon)^{-1}:  \Psi_\varepsilon(Out(v^*)) \rightarrow \Psi_\varepsilon(In(v)).
$$

 \begin{figure}[h]
 \centering
\includegraphics[width=13cm]{./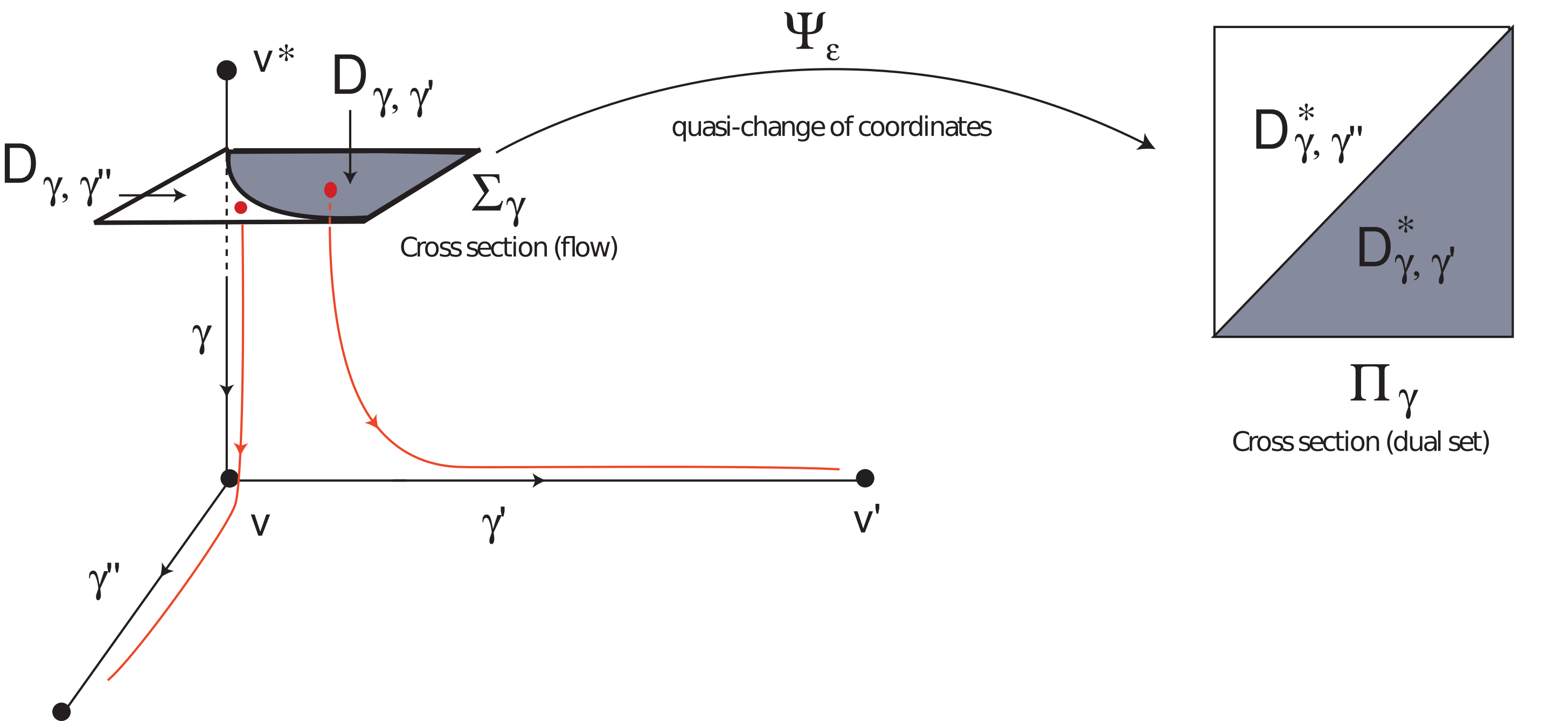}
\caption{\small Ilustration of $D_{\gamma, \gamma'}$ (left)  and its image under $\Psi_\varepsilon$ (right).  } 
\label{fig:Terminology3}
\end{figure}

The next result ensures that, although the original global map $P_\gamma$  is given by an invertible linear map (cf. Subsection \ref{ss: global map}), the map $H^\varepsilon $  
converges, in the $C^1$--topology, to the Identity map (denoted by \emph{Id}) as $\varepsilon \rightarrow 0$.

\begin{lem}\cite[Lemma 7.2]{alishah2019asymptotic} 
\label{global in the dual}
The following equality holds for $v, v^*\in \mathcal{V}$:
$$
 \lim_{\varepsilon\rightarrow 0} H^\varepsilon|_{\Psi_\varepsilon (Out(v^*))\cap \Pi_\gamma(\varepsilon)} =\text{Id }\,  |_{\Psi_\varepsilon (Out(v^*))\cap \Pi_\gamma(\varepsilon)}.
 $$

\end{lem}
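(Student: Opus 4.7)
My strategy is to exploit that the logarithmic rescaling $y_j = -\varepsilon^2 \log x_j$ converts multiplication by a bounded positive constant in the transverse directions into translation by a bounded constant, which the $\varepsilon^2$ prefactor then squashes to zero. Since the global map $P_\gamma$ is, to leading order, a linear transformation in the two transverse coordinates along $\gamma$, one expects it to look like the identity in the dual cone as $\varepsilon \to 0$.

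To implement this I would first introduce adapted coordinates $(x_l, x_j, x_k)$ on a tubular neighbourhood of $\gamma = [v^\star \to v]$, with $x_l$ parametrising the position along $\gamma$ and $(x_j, x_k)$ the two transverse coordinates corresponding to the faces $\sigma_j, \sigma_k \in \mathcal{F}_{v^\star} \cap \mathcal{F}_v$ that contain $\gamma$. Integrating the linearised equation $\dot{x}_j = a_j(x_l)\,x_j\bigl(1 + O(x_j + x_k)\bigr)$ along $\gamma$ over the (bounded, smooth) transit time $T$ from $Out(v^\star)$ to $In(v)$ yields
\begin{equation*}
\log x_j(T) = \log x_j(0) + \Lambda_j + O\bigl(x_j(0) + x_k(0)\bigr),
\end{equation*}
with $\Lambda_j = \int_0^T a_j(x_l(t))\,dt$ smooth and bounded, and an analogous formula for $x_k$. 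Substituting $x_j(0) = e^{-y_j/\varepsilon^2}$ into $H^\varepsilon = \Psi_\varepsilon \circ P_\gamma \circ \Psi_\varepsilon^{-1}$ gives
\begin{equation*}
H^\varepsilon(y)_j = y_j - \varepsilon^2 \Lambda_j + O\bigl(\varepsilon^2 e^{-y_j/\varepsilon^2} + \varepsilon^2 e^{-y_k/\varepsilon^2}\bigr),
\end{equation*}
and analogously for the $k$th component. On $\Pi_\gamma(\varepsilon)$ the bounds $y_j, y_k \ge \varepsilon$ make the exponential remainders superexponentially small, while $\varepsilon^2 \Lambda_j \to 0$ uniformly, giving $C^0$ convergence.

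For the $C^1$ part I would differentiate the above expression: the derivatives of $\Lambda_j$ in the original variables are bounded, while the chain rule contributes factors $\partial x_i/\partial y_i = -\varepsilon^{-2} e^{-y_i/\varepsilon^2}$ which, together with the $\varepsilon^2$ prefactor, remain superexponentially small on the domain. Hence every entry of the Jacobian of $H^\varepsilon - \mathrm{Id}$ vanishes uniformly on compact subsets, yielding the claimed $C^1$ limit. The principal obstacle is twofold: one has to treat carefully the "third" coordinate $y_l$ with $\sigma_l \in \mathcal{F}_{v^\star}\setminus \mathcal{F}_\gamma$ (fixed at $-\varepsilon^2 \log\varepsilon \to 0$ on $Out(v^\star)$, and therefore effectively disappearing in the limit), and one must justify the uniform linearisation of the flow across the whole of $N_\gamma$ by gluing the $C^1$-linearisations afforded at $v^\star$ and $v$ by the non-resonance hypothesis \textbf{(TH)} with the regular non-degenerate flow in the interior of $N_\gamma$.
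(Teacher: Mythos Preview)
The paper does not supply its own proof of this lemma: it is quoted verbatim as \cite[Lemma~7.2]{alishah2019asymptotic} and used as a black box. So there is no in-paper argument to compare against.

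Your proposal is, in outline, the standard argument and is correct in spirit. The essential mechanism is exactly what you identify: in the transverse coordinates $x_j$ (for $\sigma_j$ containing $\gamma$) the global diffeomorphism $P_\gamma$ has the form $x_j \mapsto c_j\,x_j\,(1+O(|x|))$ with $c_j$ bounded and bounded away from zero, so that under $y_j=-\varepsilon^2\log x_j$ one gets $H^\varepsilon(y)_j = y_j - \varepsilon^2\log c_j + \varepsilon^2\,O(e^{-\min_i y_i/\varepsilon^2})$, which is $y_j+o(1)$ uniformly on $\Pi_\gamma(\varepsilon)$. Your treatment of the $C^1$ estimate via the chain rule, and of the ``lost'' coordinate $y_l$ orthogonal to $Out(v^\star)$ (which sits at $-\varepsilon^2\log\varepsilon\to 0$), is also on target.

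Two small points worth tightening. First, you do not actually need to invoke the $C^1$-linearisations at $v^\star$ and $v$ for this lemma: the global map $P_\gamma$ is already a genuine $C^1$ diffeomorphism between the cross-sections (flow-box theorem away from equilibria), and the only structural fact you use is that $P_\gamma$ preserves the invariant faces $\{x_j=0\}$, which forces the leading behaviour $x_j\mapsto c_j x_j$ in each transverse direction. Second, the ``compact subsets'' in the $C^1$ convergence must be allowed to sit inside the $\varepsilon$-dependent domain $\Pi_\gamma(\varepsilon)$; in practice one fixes a compact $K\subset\{y_j>0:\sigma_j\supset\gamma\}$ and observes that $K\subset\Pi_\gamma(\varepsilon)$ for all sufficiently small $\varepsilon$, which is how the cited reference handles it.
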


Lemma \ref{global in the dual} says that, for any heteroclinic connection of the type $\gamma=[v^* \rightarrow v]$, we can
identify asymptotically the two sections $\Psi_\varepsilon (Out(v^*))$ and $\Psi_\varepsilon (In(v))$. We will refer  to the identified sections as the two-dimensional manifold $\Pi_\gamma(\varepsilon)$; it may be seen as $\Psi_\varepsilon(\Sigma_\gamma)$, where $\Sigma_\gamma$ is (any) cross section to $\gamma$, as depicted in Figure \ref{fig:Terminology3}.   

\bigbreak
Define $D_{\gamma, \gamma'}  $   the set of points in $\Sigma_\gamma $ that follows the connection $\gamma'=[v\rightarrow v']$ at a distance $\varepsilon>0$
and  set $$D^*_{\gamma, \gamma'}=\Psi_\varepsilon(D_{\gamma, \gamma'})\subset \Pi_\gamma(\varepsilon).$$

Let   $P_{\gamma, \gamma'}$ be the map that carries points from $D_{\gamma, \gamma'} \subset \Sigma_\gamma$ to $Out(v)\cap \gamma'$. For the admissible path   $\{\gamma, \gamma'\}$ defined as above, let  
$$
F_{\gamma, \gamma'}= \Psi_\varepsilon \circ P_{\gamma, \gamma'} \circ (\Psi_\varepsilon )^{-1}|_{D^*_{\gamma, \gamma'}}.
$$

For $\sigma_j\in \mathcal{F}_{v}$, denote by  $j_*$ the index of face  within $\mathcal{F}_v$  orthogonal to $\gamma'$.  
Consider the sector $\Pi_{\gamma, \gamma'}\subset \inter(\Pi_\gamma)$ defined as
\begin{equation}
\label{dominio2}
\Pi_{\gamma,\gamma'} := \left\{  y \in \inter(\Pi_\gamma): y_j>\frac{\chi_j^v}{\chi_{j_*}^{v}}y_{j_*}, \quad \forall j:\sigma_j\in \mathcal{F}_v, \quad \sigma_j\neq \sigma_{j_\ast}  \right\},
\end{equation}
containing all points in $\inter(\Pi_\gamma)$ whose image by $(\Psi_\varepsilon)^{-1}$ follow the admissible path $\{\gamma,\gamma'\}$ at a given positive (small) distance.

\begin{lem} 
 \label{lem: local dynamics}
 The following equality holds for the admissible path $\{\gamma, \gamma'\}$:
$$
\lim_{\varepsilon \rightarrow 0} F_{\gamma, \gamma'}= L_{\gamma, \gamma'}
$$
where 
$L_{\gamma, \gamma'} : \Pi_{\gamma, \gamma'}\to \Pi_{\gamma'}$ is the linear map defined by:
$$
L_{\gamma, \gamma'}(y) = \left( y_j - \frac{\chi_j^v}{\chi_{j_*}^v}y_{j_*}\right)_{\sigma_j \in \mathcal{F}}.
$$
\end{lem}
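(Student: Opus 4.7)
The plan is to realise $F_{\gamma,\gamma'}$ in the dual cone as the first-hit (Poincar\'e) map obtained by flowing the rescaled vector field through $N_v$ until the trajectory reaches the outgoing cross-section $\Psi_\varepsilon(\mathrm{Out}(v)\cap\gamma')$, and then to pass to the limit $\varepsilon\to 0$. Lemma~\ref{prop:limit} tells us that $\varepsilon^{-2}(\Psi_\varepsilon)_\ast f \to \chi^v$ in the $C^1$--topology on $\Pi_v(\varepsilon)$, so the local flow is asymptotically linear, with trajectories $y(t)=y+t\,\chi^v$. Lemma~\ref{global in the dual} further guarantees that the global maps used to identify the cross-sections across $\gamma$ and $\gamma'$ converge to the identity, so they contribute nothing in the limit and it suffices to analyse the local first-hit map.

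I would then compute the first-hit map for the constant field $\chi^v$. In dual coordinates the outgoing cross-section $\Psi_\varepsilon(\mathrm{Out}(v)\cap \gamma')$ is given by $x_{j_\ast}=\mathrm{const}$, equivalently $y_{j_\ast} = -\varepsilon^{2}\log(\mathrm{const}) = O(\varepsilon^2)$, which collapses to the hyperplane $\{y_{j_\ast}=0\}$ as $\varepsilon\to 0$. Since $\gamma'$ is tangent at $v$ to the unstable eigendirection orthogonal to $\sigma_{j_\ast}$, the corresponding eigenvalue is positive and hence $\chi^v_{j_\ast}<0$; thus for $y\in\Pi_{\gamma,\gamma'}$ the line $y+t\,\chi^v$ crosses $\{y_{j_\ast}=0\}$ at time $T(y)=-y_{j_\ast}/\chi^v_{j_\ast}>0$, where the other coordinates read
\begin{equation*}
y_j + T(y)\,\chi^v_j \;=\; y_j - \frac{\chi^v_j}{\chi^v_{j_\ast}}\,y_{j_\ast},
\end{equation*}
which is exactly the $j$-th component of $L_{\gamma,\gamma'}(y)$. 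For $\sigma_j\notin\mathcal{F}_v$ both $y_j$ and $\chi^v_j$ vanish, so the output is $0$, and the image lies in $\Pi_{\gamma'}$ as required.

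To close the argument I would verify that $\Pi_{\gamma,\gamma'}$ is precisely the correct domain: the inequalities in~\eqref{dominio2} say exactly that $y_j + t\,\chi^v_j > 0$ for every $t\in[0,T(y)]$ and every $\sigma_j\in\mathcal{F}_v\setminus\{\sigma_{j_\ast}\}$, meaning the linear trajectory does not leave $\Pi_v$ through another face before exiting via $\gamma'$ -- this is the linearisation of the requirement that $(\Psi_\varepsilon)^{-1}(y)$ follows the admissible path $\{\gamma,\gamma'\}$. The announced $C^1$--convergence $F_{\gamma,\gamma'}\to L_{\gamma,\gamma'}$ then follows by combining (i) the $C^1$--convergence of the vector field (Lemma~\ref{prop:limit}), which by smooth dependence of solutions on the field gives $C^1$--convergence of the time-$t$ flows on compact sets, (ii) the Implicit Function Theorem applied to the transverse intersection of $\chi^v$ with $\{y_{j_\ast}=0\}$, providing $C^1$--regularity of the hit-time $T(y)$, and (iii) the identity asymptotic for the global maps (Lemma~\ref{global in the dual}). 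I expect the main technical difficulty to be showing that this error is uniform in $C^1$ on compact subsets of $\Pi_{\gamma,\gamma'}$ bounded away from its boundary (where $T(y)$ would blow up), which is precisely the compact-uniform form of convergence supplied by Lemma~\ref{prop:limit}.
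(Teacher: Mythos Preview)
Your argument is correct and takes a genuinely different route from the paper. The paper works in the original phase-space coordinates: it writes down the linearised system at $v$ explicitly with eigenvalues $E_2>E_1>0$ and $-C<0$, integrates to obtain the local map $P_v:\{z=\varepsilon\}\to\{y=\varepsilon\}$ as a power-law $(x_0,y_0)\mapsto\bigl(x_0y_0^{-E_1/E_2},\,y_0^{C/E_2}\bigr)$, and then (implicitly) relies on the fact that the logarithmic rescaling $\Psi_\varepsilon$ converts products and powers into sums and scalar multiples, yielding the linear formula $L_{\gamma,\gamma'}$; the domain inequality $y_j>\frac{\chi^v_j}{\chi^v_{j_*}}y_{j_*}$ is obtained by asking which face of $N_v$ the trajectory reaches first (i.e.\ from $x_0y_0^{-E_1/E_2}>1$). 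You instead stay entirely on the dual side: you import the $C^1$--convergence of the rescaled push-forward field to the constant vector $\chi^v$ from Lemma~\ref{prop:limit}, compute the first-hit map of the limiting straight-line flow to the hyperplane $\{y_{j_*}=0\}$, and close with a smooth-dependence plus Implicit Function Theorem argument. The paper's computation is more concrete and makes the eigenvalue ratios $E_1/E_2$, $C/E_2$ explicit, but it leaves the actual $C^1$--convergence of $F_{\gamma,\gamma'}$ to $L_{\gamma,\gamma'}$ implicit (deferring to Lemma~\ref{prop:limit}); your argument is more conceptual, makes the convergence mechanism transparent, and would carry over unchanged to higher-dimensional polytopes.
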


 \begin{figure}[h]
\includegraphics[width=13cm]{./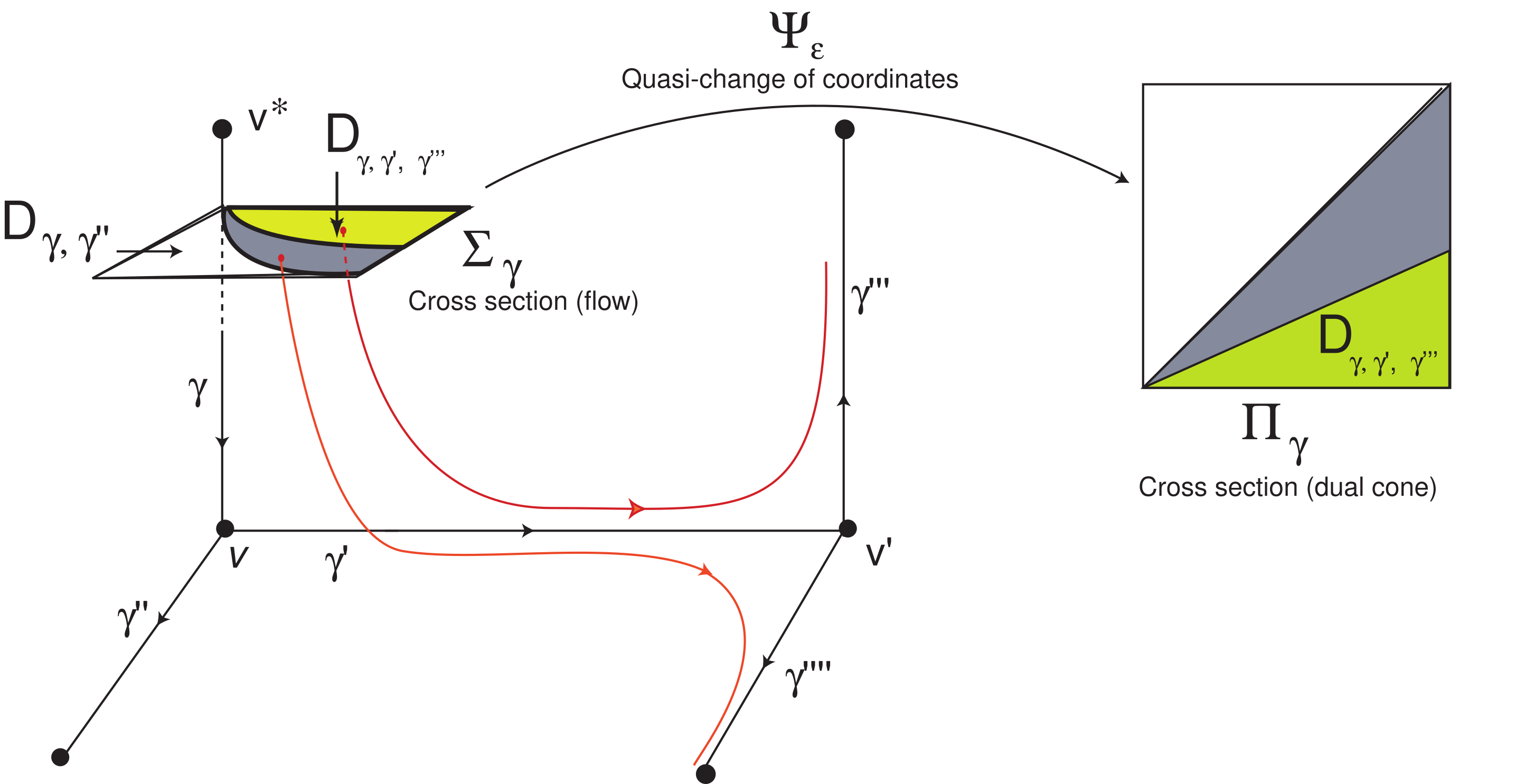}
\caption{\small Illustration of Lemma \ref{lem: local dynamics}, where $v$ is a switching node. Compare with Figure \ref{fig:Terminology3}, where $D_{\gamma, \gamma', \gamma'''}$ is a subset of $D_{\gamma, \gamma'}$.} 
\label{fig:Terminology4}
\end{figure}

\begin{proof}
 The proof of this result relies on the proof of Lemma \ref{prop:limit}.
We consider in $\Sigma_\gamma$ (cross section transverse to $\gamma$), the points  that follow the chain of heteroclinic connections
$$
\gamma=[v^* \to v], \quad \gamma'= [v\to v'].
$$
Observe that the equilibrium $v$ is a switching node of $\mathcal{H}$\footnote{If $v$ is not a switching node, the proof is much simpler. See the next ``Digestive Remark''.}. This means that $Df  (v) $ has two positive real eigenvalues, say $E_2,E_1$ where $E_2>E_1$, and one negative, say $-C$.

Let us consider a neighbourhood  $N_v$ 
and the coordinates $(x,y,z)$
in such a way that
$v\equiv (0,0,0)$, the axis $Ox$ is associated to the eigenvalue $E_1$,
the axis $Oy$ is associated to the eigenvalue $E_2$, and
the $Oz$ is associated to the eigenvalue $-C<0$.
Therefore, by \textbf{(TH)}, the system of ODEs that locally describes the vector field in  $N_v$,  is given by
\begin{equation} \label{eq:lineariz_1}
\left\{\begin{array}{l}
\dot{x}=E_1 x  \\ [1mm]
\dot{y}=E_2 y  \\ [1mm]
\dot{z}=-C z  
\end{array}
\right. , \quad \textrm{ where }  \,  E_2> E_1>0, \, \, C>0
\end{equation}
whose solution is
 \begin{equation} \label{eq:lineariz_1}
\left\{\begin{array}{l}
x(t)=x_0 e^{E_1 t}  \\ [1mm]
y(t)=y_0 e^{E_2 t}  \\ [1mm]
z(t)=z_0 e^{-C t}
\end{array}
\right. 
\end{equation}
and $(x_0, y_0, z_0)\in \RR_+^3$. The local map from the cross section  $In(v)=\{z=\varepsilon\}$ to the connected component of $Out(v)$ defined by $\{y=\varepsilon\}$ is given (in local coordinates $(x, y, \varepsilon)\equiv (x,y))$ by
$$
P_v (x,y)=\left( x_0 y_0^{-\frac{E_1}{E_2}}, y_0^{\frac{C}{E_2}} \right)
$$
and the associated time of flight is $$\frac{1}{E_2}\ln\left(\frac{\varepsilon}{y_0}\right) .$$The line defined by $x=1 \wedge y=1$ is the intersection of the two connected components of $Out(v)$. Noticing that $
x_0 y_0^{-\frac{E_1}{E_2}}>1 $ is equivalent to $ x_0 >y_0^{\frac{E_1}{E_2}},
$
one may define the region of points in $\{z=\varepsilon\}$ that follow the admissible path
$\{\gamma, \gamma'\}$ as
$$
y_{j}>\frac{E_1}{E_2}y_{j_{*}} \overset{(\S \ref{subsec:dual_cone} ) }{=} =\frac{\chi_j^v}{\chi_{j_*}^v} y_{j_*}
$$
and the result is proved.
\end{proof}

\subsection{Digestive remark}
For $v^*, v, v' \in \mathcal{V}$, we concentrate our attention in the following chain of heteroclinic connections:
\begin{equation}
\label{consecutive edges}
\gamma=[v^* \to v], \quad \gamma'= [v\to v']\quad \textrm{and}\quad  \gamma''= [v\to v'']
\end{equation}
where $v$ is a  \emph{switching node}.  Since $v$ is a switching node and  $Df_{\boldsymbol{\mu}} (v)$ has real eigenvalues,  up to a set of zero Lebesgue measure, the cross section  $\Sigma_\gamma$    is divided in two regions containing initial conditions that follow $ \gamma'$ and  $\gamma''$. These regions are disjoint cusps whose topological closure contains the origin. The map $\Psi_\varepsilon$ sends these cusps into triangles where the origin is one vertex (see also  \cite{rodrigues2017attractors}). 

Concatenating paths, the subset of $\Sigma_\gamma$ that realise an ``increased'' chain of heteroclinic connections give rise to a sequence of nested cusps containing the origin and then a sequence of nested triangles in the dual cone, as suggested by Figures \ref{fig:Terminology3} and \ref{fig:Terminology4}.

 If $v$ is not a switching node, then there are two incoming directions to $v$ and just one outcoming from $v$, which means that the inequaliy  of   \eqref{dominio2} does not impose any additional condition.

 \subsection{Heteroclinic cycle} 
 \label{ss:5.6}
For $m\in \NN$, given an admissible path of the type  $\xi = \{\gamma_0,\gamma_1,...,\gamma_m\}$,
with $v_0 = \alpha(\gamma_0)$ and $v_m = \alpha(\gamma_m)$,
the composition \\
$$
P_\xi := P_{\gamma_{m-1}, \gamma_m} \circ P_{\gamma_{m-1}} \circ   ...\circ P_{\gamma_{1}, \gamma_2}\circ
P_{\gamma_1} \circ P_{\gamma_{0}, \gamma_1}: D_0 \rightarrow Out(v_m)
$$
\medbreak
is the \textit{first return map to $Out(v_m)$} of solutions of  \eqref{eq:poly_rep_4} starting at $D_0\subset D_{\gamma_0, \gamma_1}$ and following $\xi$ at a distance $\varepsilon>0$. 
 It is the composition of local and global maps, when  well defined. The following result is a direct corollary of Lemma \ref{lem: local dynamics} and, roughly speaking,  asserts that the quasi-change of coordinates $\Psi_\varepsilon$ transforms the map $P_\xi$ into a piecewise linear map.

\begin{cor}
\label{lemma13}
For $m\in \NN$, given an admissible  path $\xi = \{\gamma_0,\gamma_1,...,\gamma_m\}$, let
$$
F_\xi= \Psi_{\varepsilon} \circ P_{\xi} \circ (\Psi_{\varepsilon} )^{-1}: \Pi_{\xi} \to \Pi_{\gamma_m},
$$
where
$$
\Pi_\xi:= \inter(\Pi_{\gamma_0}) \cap \bigcap_{j=1}^m \left( L_{\gamma_{j-1}, \gamma_j } \circ ...\circ L_{\gamma_0, \gamma_1} \right)^{-1}(\inter(\Pi_{\gamma_j})).
$$ 
Then
$$
\lim_{\varepsilon \rightarrow 0} F_{\xi}= L_{\gamma_{m-1}, \gamma_m}\circ ... \circ L_{\gamma_{0}, \gamma_1}=: \pi_{\xi}.
$$
\end{cor}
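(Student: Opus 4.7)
The plan is to proceed by induction on the length $m$ of the admissible path, treating the corollary as an iterated application of Lemmas \ref{global in the dual} and \ref{lem: local dynamics}. The crucial first step is to decompose $P_\xi$ into an alternating composition of transition maps through saddle neighbourhoods and global maps along edges, and then to insert $\Psi_\varepsilon^{-1} \circ \Psi_\varepsilon = \mathrm{Id}$ between every pair of consecutive factors. This rewrites the conjugated map as
$$F_\xi \;=\; F_{\gamma_{m-1},\gamma_m} \circ H^\varepsilon_{\gamma_{m-1}} \circ F_{\gamma_{m-2},\gamma_{m-1}} \circ H^\varepsilon_{\gamma_{m-2}} \circ \cdots \circ H^\varepsilon_{\gamma_1} \circ F_{\gamma_0,\gamma_1},$$
so that each factor is one of the two types for which convergence is already known: $H^\varepsilon_{\gamma_j}\to \mathrm{Id}$ by Lemma \ref{global in the dual}, and $F_{\gamma_{j-1},\gamma_j}\to L_{\gamma_{j-1},\gamma_j}$ by Lemma \ref{lem: local dynamics}.

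The base case $m=1$ is exactly Lemma \ref{lem: local dynamics}. For the inductive step, I would write $F_\xi = F_{\gamma_{m-1},\gamma_m}\circ H^\varepsilon_{\gamma_{m-1}}\circ F_{\xi'}$ where $\xi' = \{\gamma_0,\ldots,\gamma_{m-1}\}$, apply the inductive hypothesis to the tail $F_{\xi'}$, and then use the $C^1$-convergence of the two front factors together with the preservation of $C^1$-convergence under composition of uniformly bounded families of $C^1$-maps on compact sets. Passing to the limit yields
$$\lim_{\varepsilon\to 0} F_\xi \;=\; L_{\gamma_{m-1},\gamma_m} \circ \mathrm{Id} \circ \bigl( L_{\gamma_{m-2},\gamma_{m-1}}\circ \cdots \circ L_{\gamma_0,\gamma_1}\bigr) \;=\; \pi_\xi.$$

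The main technical obstacle is ensuring that all the finite-$\varepsilon$ intermediate images actually land in the domains where the next factor is defined, so that the composition in the displayed formula is meaningful uniformly in $\varepsilon$. This is exactly the reason for the specific form of $\Pi_\xi$ in the statement: it is the intersection of the preimages, under the partial limit compositions $L_{\gamma_{j-1},\gamma_j}\circ\cdots\circ L_{\gamma_0,\gamma_1}$, of the interiors of the sectors $\Pi_{\gamma_j}$. Given a compact set $K\subset \Pi_\xi$, each of the finitely many partial limit images $\pi_j(K)$ is a compact subset of $\inter(\Pi_{\gamma_j})$, at positive distance from $\partial \Pi_{\gamma_j}$. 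By the $C^1$-convergence at each intermediate step, for $\varepsilon$ small enough the finite-$\varepsilon$ intermediate images are uniformly close to these compact sets and hence remain in $\inter(\Pi_{\gamma_j})$, which keeps every composition well defined and every derivative uniformly bounded. Once this uniformity is in place, the standard argument that a composition of $C^1$-convergent families (with equibounded derivatives and nested domains) is itself $C^1$-convergent to the composition of the limits completes the proof.
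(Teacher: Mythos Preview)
Your argument is correct and is precisely the natural elaboration of what the paper intends: the paper gives no proof at all, stating only that the result ``is a direct corollary of Lemma~\ref{lem: local dynamics}'', and your decomposition of $F_\xi$ into alternating factors $F_{\gamma_{j-1},\gamma_j}$ and $H^\varepsilon_{\gamma_j}$, followed by induction and the compactness argument to control domains, is exactly how one makes that remark rigorous. The only minor addition worth noting is that you (correctly) also invoke Lemma~\ref{global in the dual} for the global-map factors $H^\varepsilon_{\gamma_j}\to\mathrm{Id}$, which the paper's one-line justification omits mentioning but which is indeed needed since $P_\xi$ interleaves local and global maps.
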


For every $y\in \Pi_\xi$, we have $\pi_\xi(y)\in  \inter(\Pi_{\gamma_m})$ and then there exists a solution of \eqref{eq:poly_rep_4} from $(\Psi_\varepsilon)^{-1}(y)$ to $(\Psi_\varepsilon)^{-1}(\pi_\xi(y))$ following the heteroclinic path $\xi$. 
The map $\pi_\xi= L_{\gamma_{m-1}, \gamma_m}\circ ... \circ L_{\gamma_{0}, \gamma_1}$ of Corollary \ref{lemma13}, designated by \textit{skeleton map along $\xi$}, is  an endomorphism in $\RR_+^6$ and induces  an invertible matrix 
\begin{equation}
\label{matrix1}
M_\xi= \left( \delta_{jk} - \frac{\chi_j^v}{\chi_{{j_*}}^v}\delta_{jk}\right)_{\sigma_j, \sigma_k \in \mathcal{F}},
\end{equation}
where $\delta$ represents the \emph{Kronecker delta operator}.
The matrix $M_\xi$  gives a  suitable  representation for computational purposes. From now on, recall that:
 \begin{eqnarray*}
\Pi_\xi &\mapsto & \text{subset of $\Pi_{\gamma_0}$ of initial conditions whose image under $\Psi_\varepsilon^{-1}$}  \\ 
 & &  \text{that follow the heteroclinic path $\xi$ at a distance $\varepsilon>0$;} \\ 
\pi_\xi &\mapsto & \text{linear map  from $\Pi_{\gamma_0}$ to $\Pi_{\gamma_m}$}. \\ 
 \end{eqnarray*}

\subsection{Dynamics of a linear operator}
\label{PF_review}
For the sake of completeness, we review the dynamics associated to a linear two-dimensional operator, which   follows from the \emph{Perron-Frobenius Theory} -- we address the reader to Chapter 1.9 of \cite{katok1997introduction} for more information on the subject. 
Suppose that $A$ is a linear map defined in  $\RR^2$ whose eigenvalues are real, different and positive, say $\lambda_1< \lambda_2\in \RR^+$ and with eigenspaces $E_1$ and $E_2$, respectively. Then:
\begin{lem}\label{trivial1}
If $v\in \RR^2\backslash E_1$,  then $\lim_{n\in \NN}\frac{A^n(v)}{\|A^n(v)\|} \in E_2$.
 \end{lem}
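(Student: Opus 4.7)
The plan is to decompose $v$ in the eigenbasis and factor out the dominant power. Since $\lambda_1 \neq \lambda_2$ and the eigenvalues are real, the spaces $E_1$ and $E_2$ are one-dimensional and their direct sum is $\RR^2$. Fix unit eigenvectors $e_1\in E_1$ and $e_2\in E_2$, and write the given $v\in \RR^2\setminus E_1$ uniquely as
\begin{equation*}
v = c_1 e_1 + c_2 e_2,\qquad c_1,c_2\in \RR.
\end{equation*}
The hypothesis $v\notin E_1$ is exactly the statement $c_2\neq 0$, which is the essential ingredient that makes the argument work.

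Next I would apply $A^n$ to $v$, using the eigenvalue relations $A e_j = \lambda_j e_j$, to obtain
\begin{equation*}
A^n v \;=\; c_1\lambda_1^n\, e_1 + c_2\lambda_2^n\, e_2 \;=\; \lambda_2^n\!\left( c_1 (\lambda_1/\lambda_2)^n e_1 + c_2 e_2 \right).
\end{equation*}
Since $0<\lambda_1<\lambda_2$, the ratio $\lambda_1/\lambda_2\in(0,1)$, so $(\lambda_1/\lambda_2)^n \to 0$ as $n\to\infty$. Therefore the rescaled vector $\lambda_2^{-n} A^n v$ converges to $c_2 e_2$ in $\RR^2$, and in particular $\lambda_2^{-n}\|A^n v\| \to |c_2|\|e_2\|=|c_2|\neq 0$ by continuity of the norm.

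Finally, because the normalizing scalar $\lambda_2^{-n}>0$, we have
\begin{equation*}
\frac{A^n v}{\|A^n v\|} \;=\; \frac{\lambda_2^{-n} A^n v}{\lambda_2^{-n}\|A^n v\|} \;\xrightarrow[n\to\infty]{}\; \frac{c_2 e_2}{|c_2|} \;=\; \mathrm{sgn}(c_2)\, e_2 \;\in\; E_2,
\end{equation*}
which is the desired conclusion. There is no real obstacle here: the argument is a direct consequence of the spectral decomposition of $A$ together with the simple observation that the subdominant eigencomponent decays geometrically relative to the dominant one. The only point requiring care is the nondegeneracy condition $c_2\neq 0$, which is precisely guaranteed by the hypothesis $v\notin E_1$; without it the quotient $A^n v/\|A^n v\|$ would stay in $E_1$ for all $n$ and the limit would not lie in $E_2$.
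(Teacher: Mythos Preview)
Your proof is correct and follows essentially the same approach as the paper: both decompose $v$ in the eigenbasis, apply $A^n$ to get $(\lambda_1^n c_1,\lambda_2^n c_2)$ in coordinates, and use $\lambda_1/\lambda_2<1$ to conclude that the normalized vector converges to a unit vector in $E_2$. Your write-up is in fact slightly more careful than the paper's, since you keep track of the sign $\mathrm{sgn}(c_2)$, whereas the paper simply writes the limit as $(0,1)$.
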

The \emph{Jordan decomposition Theorem} \cite{palis1982local, katok1997introduction} provides an unitary orthogonal basis of $\RR^2$ such that the matrix of $A$ with respect to that basis is diagonal. In this case, the basis consists of two non-zero unit vectors of $E_{1}$ and $E_{ 2}$, respectively. Then for $(v_1, v_2) \in \RR^2\backslash\{(0,0)\}$. we have:
$$
A^n
\left( \begin{array}{c}
v_1  \\
 v_2 \\
\end{array} \right)\,
=\left(
\begin{array}{cc}
\lambda_1^n & 0  \\
 0 & \lambda_2^n \\
\end{array}
\right)\,.
\left(
\begin{array}{c}
v_1  \\
 v_2 \\
\end{array}
\right)\,
= 
\left(
\begin{array}{c}
\lambda_1^n v_1  \\
 \lambda_2^n  v_2 \\
\end{array}
\right)\,.
$$
Since $\lambda_2>\lambda_1$, we get:
$$
\lim_{n\in \NN} \frac{\left(\lambda_1^n v_1, \lambda_2^n v_2\right) }{\sqrt{\lambda_1^{2n} v_1^2+ \lambda_{2}^{2n} v_2^2}}= (0,1) \in E_2,
$$
and Lemma \ref{trivial1} follows. 
 
\subsection{Structural set}
\label{ss: paths}
We now define the concept of \emph{structural set}, a definition emerging  from the Isospectral Theory \cite{bunimovich2012isospectral}.

\begin{defn}\label{def:structural_set}
A non-empty set of  heteroclinic connections $\mathcal{S}$  is said to be a \emph{structural set} for the heteroclinic network $\mathcal{H}$ if every heteroclinic cycle of $\mathcal{H}$ contains an edge of $\mathcal{S}$.
\end{defn}
In general, the structural set associated to a heteroclinic network is not unique, but the results do not depend  on this set of connections \cite{alishah2019asymptotic}. From now on, we ask that this set is \emph{minimal}. 
\bigbreak

\begin{defn}\label{def:branchs}
For $m\in \NN$, we say that the admissible heteroclinic path $\xi = \{\gamma_0,...,\gamma_m\}$ is a $\mathcal{S}$--\emph{branch} for the network $\mathcal{H}$  if:\medbreak
\begin{enumerate}
\item $\gamma_0$ and $\gamma_m$ belong to $\mathcal{S}$;\medbreak
\item $\gamma_j \notin \mathcal{S}$ for all $j\in \{1, ..., m-1\}$. 
\end{enumerate}
\end{defn}
We denote by $B_\mathcal{S}$ the set of all $\mathcal{S}$--branches.
\bigbreak
\begin{defn} 
Let $\mathcal{H}'$ be a   cycle of the heteroclinic network $\mathcal{H}$. We say that $\mathcal{H}'$ is \emph{elementary} if $\mathcal{H}'\cap \mathcal{S}$ contains just one element. Otherwise $\mathcal{H}'$ is \emph{non-elementary}. 
\end{defn}
If a cycle $\mathcal{H}'$ is non-elementary, then it is the concatenation of a finite number of branches of $\mathcal{S}$, say $\xi_0, \xi_1, ..., \xi_m$, $m\in \NN$; in this case we write $$\mathcal{H}'= \xi_0\oplus \xi_1 \oplus...\oplus \xi_m.$$

Our next goal is the formal definition of \textit{skeleton map} associated to a given structural set $\mathcal{S}$.  First, set:
$$
\Pi_\mathcal{S} := \bigcup _{\gamma \in \mathcal{S}} \Pi_\gamma
\quad\textrm{ and }\quad
D_\mathcal{S}^{\ast} := \bigcup _{\xi \in B_\mathcal{S}} \Pi_\xi.
$$
 
If $\xi$ is a $\mathcal{S}$-branch, as observed in expression \eqref{rank1}, the set $\Pi_\xi \subset\Rr_+^6$ is a two-dimensional submanifold of $\RR_+^2$ since four components of  $\Pi_\gamma$ are zero. This is why, from now on, this set will be seen as subsets of $\RR^2$. This fact will be used later at the Subsection  \ref{subsec:the_dual}. 
We are in the right moment to introduce the \textit{skeleton map} associated to $\mathcal{S}$ through   $\pi_\xi$ already defined in Corollary \ref{lemma13}.

\begin{defn}\label{def:skltn_flow_map_S}
Given a structural set $\mathcal{S}$ associated to $\mathcal{H}$, the map $$\pi_\mathcal{S} : D_\mathcal{S}^{\ast}\to \Pi_\mathcal{S}$$  given by 
$$
\pi_\mathcal{S}(y)=\pi_\xi(y),
$$
for $y\in\Pi_\xi $ and $ {\xi \in B_\mathcal{S}}$, is called the \textit{skeleton map} associated to $\mathcal{S}$.
\end{defn}

\begin{figure}[h]
\includegraphics[width=10cm]{./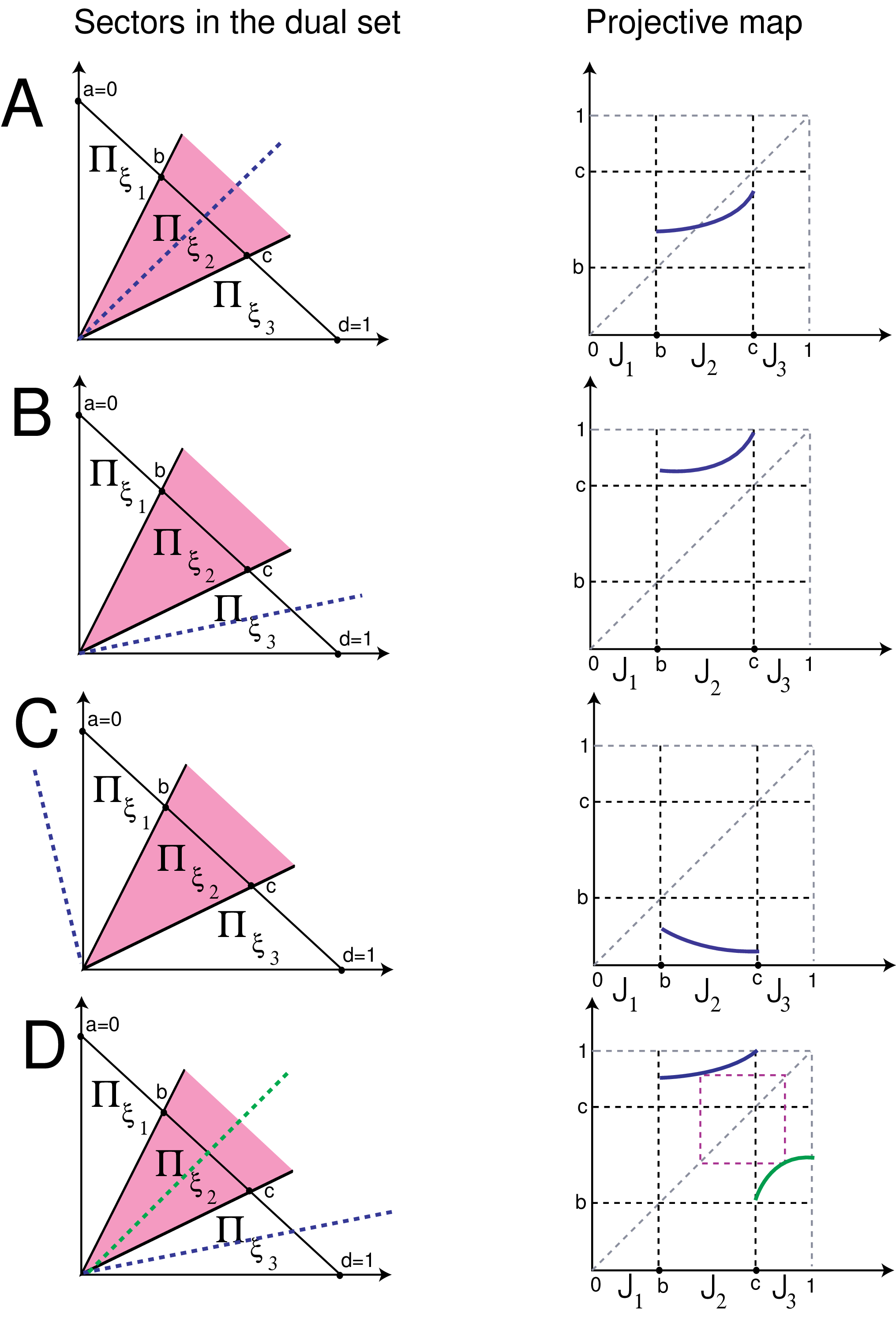}
\caption{\small{The dictionary between the dynamics of $\pi_\xi$ on the dual and the projective map  for an elementary cycle ~$\xi$.}}
 \label{fig:dual_projective1}
\end{figure}

\bigbreak

The following result says that Lebesgue almost all points in $\Pi_\mathcal{S}$ follow \emph{ad infinitum} a prescribed $\mathcal{S}$-branch (or an admissible concatenation of $\mathcal{S}$-branches).

\begin{prop}
\label{prop15}
If $\mathcal{H}$ is asymptotically stable, the set $D_\mathcal{S}^{\ast}$ has full Lebesgue measure in $\Pi_\mathcal{S}$.
\end{prop}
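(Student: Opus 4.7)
The plan is to reduce the claim to showing, for each fixed $\gamma \in \mathcal{S}$, that $\Pi_\gamma \setminus D_\mathcal{S}^\ast$ has two-dimensional Lebesgue measure zero inside $\Pi_\gamma$. Since $\Pi_\mathcal{S}$ is a finite union of such $\Pi_\gamma$, this suffices.

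First I would verify that the set $B_\mathcal{S}$ of $\mathcal{S}$-branches is \emph{finite}. By minimality of the structural set, no cycle of $\mathcal{H}$ avoids $\mathcal{S}$; therefore any admissible path emanating from $\gamma_0 \in \mathcal{S}$ must meet another edge of $\mathcal{S}$ within a bounded number of steps (otherwise, since $\mathcal{H}$ is a finite graph, one could extract a sub-cycle missing $\mathcal{S}$, contradicting Definition \ref{def:structural_set}). Only finitely many admissible paths satisfying the constraints in Definition \ref{def:branchs} therefore exist.

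Next I would use the asymptotic stability of $\mathcal{H}$: there exists a forward-invariant neighborhood $\mathcal{U}$ of $\mathcal{H}$ whose orbits accumulate on $\mathcal{H}$. Shrinking $\varepsilon$ if necessary, the tubular neighborhood $\mathcal{T}_\varepsilon$ defined in \eqref{tubular_neigh1} lies in $\mathcal{U}$, so every $p \in (\Psi_\varepsilon)^{-1}(\Pi_\gamma)$ has its forward orbit trapped in $\mathcal{T}_\varepsilon$. Hence the orbit perpetually follows an admissible sequence of edges and, by the structural set property, enters the neighborhood of some $\mathcal{S}$-edge within finitely many segments — unless it is obstructed by landing on an intermediate stable manifold or on a switching-node separatrix. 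The points of $\Pi_\gamma$ that are \emph{not} in $D_\mathcal{S}^\ast$ are precisely those obstructed ones. Under $\Psi_\varepsilon$ the stable manifolds correspond to the boundary faces of the various $\Pi_{\gamma_j}$ (where some coordinate vanishes), while the separatrices at switching nodes correspond to the bounding hyperplanes $y_j = (\chi_j^v/\chi_{j_*}^v)\, y_{j_*}$ appearing in \eqref{dominio2} and in the ``Digestive remark''. Each such obstruction is a one-dimensional linear piece inside the two-dimensional $\Pi_\gamma$, hence has zero planar Lebesgue measure.

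Finally, by Corollary \ref{lemma13}, the pullback of these codimension-one sets through compositions of the invertible linear maps $L_{\gamma_{j-1},\gamma_j}$ remains a union of one-dimensional linear (or piecewise-linear) sets. Taking the union over the finitely many $\xi \in B_\mathcal{S}$ yields that $\Pi_\gamma \setminus D_\mathcal{S}^\ast$ is a finite union of such null sets, hence itself Lebesgue null. The main obstacle I expect is to rigorously transfer statements about the linear skeleton maps $\pi_\xi$ back to statements about the true flow: one must argue that the actual (nonlinear) dynamics on $\mathcal{T}_\varepsilon$ prescribes the same itineraries as the skeleton dynamics up to a null exceptional set. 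This requires combining the $C^1$-convergences of Lemma \ref{prop:limit} and Lemma \ref{global in the dual} to guarantee that the nonlinear corrections do not move a positive-measure set of points across the sector boundaries. Once this identification is in place, the measure-zero conclusion follows directly from the finiteness of $B_\mathcal{S}$ and the codimension-one nature of the exceptional set.
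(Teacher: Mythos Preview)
Your approach is essentially the same as the paper's: both identify the complement $\Pi_\gamma \setminus D_\mathcal{S}^\ast$ as a finite (or countable) union of preimages of the boundary pieces $\partial \Pi_{\gamma_j}$ and of the separatrix hyperplanes from \eqref{dominio2} under the invertible linear maps $L_{\gamma_{j-1},\gamma_j}$, and conclude by the fact that linear isomorphisms preserve Lebesgue-null sets. Your observation that $B_\mathcal{S}$ is finite (so the union is finite, not merely countable) is a nice sharpening.

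One point where you over-complicate: the ``main obstacle'' you anticipate does not arise. By definition, $D_\mathcal{S}^\ast = \bigcup_{\xi \in B_\mathcal{S}} \Pi_\xi$, and each $\Pi_\xi$ is specified in Corollary~\ref{lemma13} purely as an intersection of preimages of open sectors under the \emph{linear} maps $L_{\gamma_{j-1},\gamma_j}$. The statement of the proposition is therefore entirely at the level of the skeleton; no transfer from the nonlinear Poincar\'e maps $P_\xi$ back to the linear model is required. The paper's proof reflects this: after one sentence invoking asymptotic stability, the argument proceeds only with the sets $\partial\Pi_{\gamma_j}$ and the linear isomorphisms $L$. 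Your detour through the flow on $\mathcal{T}_\varepsilon$, trapped orbits, and the $C^1$-convergences of Lemmas~\ref{prop:limit} and~\ref{global in the dual} can be dropped without loss.
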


\begin{proof}
  Suppose that $\mathcal{H}$ is asymptotically stable. In particular, there are no more invariant and compact sets in $\Gamma_{(2,2,2)}$ in the neighbourhood of $\mathcal{H}$. Define $D^*_{\gamma_0, \gamma_1} = \bigcup_{\gamma_0, \gamma_1 \in \mathcal{E}} \Pi_{\gamma_0, \gamma_1} $ over any heteroclinic path of the type $\{\gamma_0, \gamma_1\}$.
  The set $D^*_{\gamma_0, \gamma_1} $ has full Lebesgue measure in $\Pi_{\gamma_0}$ because (\cite{alishah2019asymptotic}):
$$
\Pi_{\gamma_0} \backslash D^*_{\gamma_0, \gamma_1} \, \, \subset\, \,  \partial \Pi_{\gamma_0}
 \cup \left( \bigcup_{\gamma_0, \gamma_1 \in \mathcal{E}} L_{\gamma_0, \gamma_1}^{-1} (\partial \Pi_{\gamma_1}) \right) .
$$ 
Note that $L_{\gamma_0, \gamma_1}$ is a linear isomorphism   carrying sets with zero Lebesgue measure into sets with the same property. Consider now any heteroclinic path of the type $\{\gamma_0, \gamma_1, \gamma_2\}$. Using the same line of argument, we get :
\begin{eqnarray*}
\Pi_{\gamma_0} \backslash D^*_{\gamma_0, \gamma_1, \gamma_2} \, \, &\subset\,& \,  \partial \Pi_{\gamma_0}
 \cup \left( \bigcup_{\gamma_0, \gamma_1 \in \mathcal{E}} L_{\gamma_0, \gamma_1}^{-1} (\partial \Pi_{\gamma_1}) \right) \\
&& \cup \left( \bigcup_{\gamma_0, \gamma_1, \gamma_2 \in \mathcal{E}}  L_{\gamma_0, \gamma_1}^{-1}  \circ L_{\gamma_1, \gamma_2}^{-1} (\partial \Pi_{\gamma_2}) \right) ,
\end{eqnarray*}
and then $\Pi_{\gamma_0} \backslash D^*_{\gamma_0, \gamma_1, \gamma_2}$ has zero Lebesgue measure   in $\Pi_{\gamma_0}$. 
Continuing the procedure a countable number of times, we may conclude that $
  D^*_{\gamma_0, \gamma_1, \gamma_2, ...}
$
has also full measure since it is  a countable union of sets with full Lebesgue measure in $\Pi_{\gamma_0}$


\end{proof}
  
For $m\in \NN$, assume that  $\xi = \{\gamma_0,\gamma_1,...,\gamma_m\}$ is an elementary cycle with respect to a given structural set $\mathcal{S}$ and  the map $\pi_\xi$ of Corollary \ref{lemma13} has two different positive real eigenvalues. Given $\Pi_{\xi}$, we have three disjoint possibilities: \\
\begin{itemize}
\item the greatest  eigenvector of $\pi_\xi$ lies on the corresponding sector $\Pi_\xi$ (Case A of Figure \ref{fig:dual_projective1}); \\
\item the greatest  eigenvector of $\pi_\xi$  lies on another  sector of $\Pi_{\gamma_0}$ and then the asymptotic dynamics is computed using the matrix associated to the sector to where the eigenvector moves for (Cases B and D of Figure \ref{fig:dual_projective1});\\
\item the greatest eigenvector lies outside the first quadrant. In this case, this analysis is valid just to the moment where points hit  on the boundary  (Case C of Figure \ref{fig:dual_projective1}). Dynamics accumulates on the boundary. \\
\end{itemize}

If $\xi$ is a non-elementary cycle, then the same conclusions hold by concatenating a finite number of $\mathcal{S}$--branches, provided  the corresponding eigenvalues (for the composition of linear maps) are positive.

\subsection{Projective map}
\label{sec:projective_map}

Based on~\cite[Section 3.6]{peixe2015lotka} we define a \emph{projective map} on the dual cone and study their    periodic orbits,
from where we are able to deduce the asymptotic dynamics of \eqref{eq:poly_rep_4}. The following notation will be useful in the sequel to simplify the writing: \\
\begin{itemize}
\item $\overline{v}= \sum_{i=1}^6 v_j$ for $v=(v_1, ..., v_6)\in (\RR^+_0)^6$; \\
 \item $\Delta_\gamma:=\{\, u\in {\rm int}(\Pi_\gamma)\,\colon \, \overline{u} = 1\,\}$, for $\gamma\in \mathcal{S}$; \\

\item $\Delta_{\xi}:=\{\, u\in {\rm int}(\skDomPoin{\chi}{\xi})\,\colon \, \overline{u} = 1 \,\},$ for $\xi$ a $\mathcal{S}$-branch with $\inter (\Delta_\xi)\neq \emptyset$; \\
 \item  $\Delta_\mathcal{S}:= \cup_{\gamma\in \mathcal{S}} \Delta_\gamma$.\\
\end{itemize}

\begin{defn} 
\label{def: proj}
For a structural set $\mathcal{S}$ associated to the network $\mathcal{H}$ and $\xi=\{\gamma_0, ..., \gamma_m\}$ a $\mathcal{S}$-branch ($m\in \NN$) we define: \\
\begin{enumerate}
\item \emph{the  projective map along   $\xi$} as $\skProjPoin{\chi}{\xi}:\skProjDom{}{\xi} \subset \skProjDom{}{\gamma_0}\to \skProjDom{}{\gamma_m} $ given by:
\begin{equation*}
  \skProjPoin{}{\xi}(u) =   \skPoin{}{\xi}(u) /\overline{ \skPoin{}{\xi}(u) };
  \end{equation*}

\item \emph{the projective $\mathcal{S}$-map} $\skProjPoin{\chi}{\mathcal{S}}: D_\mathcal{S}^*   \to \Delta_\mathcal{S} $ given by\footnote{The domain of $\skProjPoin{\chi}{\mathcal{S}}$ has been defined in Proposition \ref{prop15}.}:
  $$\skProjPoin{\chi}{\mathcal{S}}(u)= \skProjPoin{}{\xi}(u).$$ 
\end{enumerate}
\end{defn}

\begin{defn}
\label{def_x}
A point
$u\in D_\mathcal{S}^*$ such that 
$u=(\skProjPoin{\chi}{\mathcal{S}})^n(u)$ for the some $n\geq 1$, is called a  {\em $n$-periodic point}\index{periodic point} of $\skProjPoin{\chi}{\mathcal{S}}$.
\end{defn}

Throughout this article, we assume that the period  of Definition \ref{def_x}  is minimum.
For $n \in \mathbb{N}$, if $u\in \skProjDom{}{\mathcal{S}}$ is a $n$-periodic point of $\skProjPoin{\chi}{\mathcal{S}}$,
let us denote by $\xi_k$ the unique $\mathcal{S}$-branch such that
$(\skProjPoin{\chi}{\mathcal{S}})^j(u)\in  \skProjDom{}{\xi_k}$
for all $j,k=0, 1,\ldots, n-1$ and $u\in \skProjDom{}{\xi_j}$. Concatenating these branches, we obtain the cycle  $$\Theta= \xi_{k_0}  \oplus \xi_{k_1} \oplus \ldots \oplus \xi_{k_{n-1}}.$$
We refer to this cycle $\Theta$ as the {\em itinerary}\index{itinerary} of
the periodic point $u$.
\bigbreak

\begin{defn}\label{eigenvectors}
\label{perron_eig}
Let $u\in D_\mathcal{S}^*$  be a periodic point of $\skProjPoin{\chi}{\mathcal{S}}$ whose
itinerary is the cycle $\Theta$. We say that: \\
\begin{enumerate}
\item $u$ is an
{\em eigenvector}\index{projective Poincar\'e map!eigenvector} of $\skProjPoin{\chi}{\mathcal{S}}$ if there is $\lambda\in \RR\backslash\{0\}$ such that $\skProjPoin{\chi}{\mathcal{S}}(u)=\lambda\,u$. The number  $\lambda=\lambda(u)>0$ is the {\em Perron eigenvalue} of $u$. 
\item the \emph{saddle-value} of $u$, denoted by $\sigma(u)$, is the maximum ratio  $\frac{{|\lambda'| }}{\lambda  }$  where $\lambda '$ ranges over all
non-zero eigenvalues of $D \skProjPoin{\chi}{\mathcal{S}}$ different from $\lambda$.  \\
\end{enumerate}
 \end{defn}
 
   The next proposition follows straightforwardly: \\

\begin{prop}\label{maximum_ratio_prop}
Let $u$ be a periodic point of $\skProjPoin{\chi}{\mathcal{S}}$ with
itinerary   $\Theta$.  \medbreak
\begin{itemize}
\item[(a)] If $\sigma(u)<1$ then  $u$ is an attracting periodic point of  $\skProjPoin{\chi}{\mathcal{S}}$;
\item[(b)] If $\sigma(u)>1$ then  $u$ is a repelling periodic point of  $\skProjPoin{\chi}{\mathcal{S}}$. \\
\end{itemize}
\end{prop}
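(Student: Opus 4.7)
The plan is to reduce the statement to the classical linear stability criterion at a discrete fixed point, by computing $D(\hat{\pi}_\mathcal{S})^{n}$ at $u$ and identifying the saddle-value $\sigma(u)$ with the modulus of its spectrum.

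First I would observe that a minimal $n$-periodic point $u\in D_\mathcal{S}^{\ast}$ of $\hat{\pi}_\mathcal{S}$ with itinerary $\Theta=\xi_{k_0}\oplus\cdots\oplus\xi_{k_{n-1}}$ is a fixed point of $(\hat{\pi}_\mathcal{S})^{n}$. By Definition~\ref{def: proj} together with Corollary~\ref{lemma13}, this iterate is the projectivisation
\[ F(y):=\frac{\pi_\Theta(y)}{\overline{\pi_\Theta(y)}}, \qquad \pi_\Theta:=\pi_{\xi_{k_{n-1}}}\circ\cdots\circ\pi_{\xi_{k_0}}, \]
so that $F(u)=u$ is equivalent to $\pi_\Theta(u)=\lambda\,u$ with $\lambda:=\overline{\pi_\Theta(u)}>0$, the Perron eigenvalue of $u$ from Definition~\ref{perron_eig}.

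The second step is the derivative computation. Applying the quotient rule at $u$ yields, for any $h\in\Pi_{\xi_{k_0}}$,
\[ DF(u)\,h \;=\; \frac{1}{\lambda}\bigl(\pi_\Theta h - \overline{\pi_\Theta h}\,u\bigr). \]
Because $\Pi_\xi$ is two-dimensional along every $\mathcal{S}$-branch (Subsection~\ref{ss:5.6}) and $\lambda$ is simple by the Perron--Frobenius argument recalled in Subsection~\ref{PF_review}, the linear operator $\pi_\Theta$ admits a second real eigenvector $w$ with eigenvalue $\lambda'\neq\lambda$. Setting $h:=w-\overline{w}\,u$ one has $\overline{h}=0$, so $h$ spans the one-dimensional tangent space $T_u\Delta_{\xi_{k_0}}$, and a direct substitution using $\pi_\Theta u=\lambda u$, $\pi_\Theta w=\lambda' w$ gives
\[ DF(u)\,h \;=\; \frac{1}{\lambda}\bigl(\lambda' w - \lambda\overline{w}\,u - (\lambda'-\lambda)\overline{w}\,u\bigr) \;=\; \frac{\lambda'}{\lambda}\,h. \]
Hence the one-dimensional operator $DF(u)|_{T_u\Delta_{\xi_{k_0}}}$ is scalar multiplication by $\lambda'/\lambda$, and $\sigma(u)=|\lambda'|/\lambda$ coincides with the modulus of the derivative of $(\hat{\pi}_\mathcal{S})^{n}$ at the fixed point $u$.

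The proposition then follows from standard one-dimensional discrete hyperbolic-fixed-point theory. If $\sigma(u)<1$, a neighbourhood of $u$ in $\Delta_\mathcal{S}$ can be chosen on which $(\hat{\pi}_\mathcal{S})^{n}$ is a contraction (by continuity of $DF$ and the mean-value inequality), so $u$ is an attracting $n$-periodic point of $\hat{\pi}_\mathcal{S}$. Symmetrically, if $\sigma(u)>1$ the iterate $(\hat{\pi}_\mathcal{S})^{n}$ is locally a uniform expansion at $u$, forcing $u$ to be repelling. The only delicate step in the scheme is guaranteeing that no additional eigenvalues creep into $DF(u)$, which is ensured by the one-dimensionality of $\Delta_\xi$ established in Subsection~\ref{ss:5.6}; granted that, the derivative computation above is essentially a one-line identity and the remainder is classical.
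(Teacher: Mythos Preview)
Your argument is correct, and in fact more explicit than the paper's. You compute the derivative of the projectivised map $(\hat{\pi}_\mathcal{S})^n$ at $u$ directly, identify its unique nontrivial eigenvalue on $T_u\Delta_{\xi_{k_0}}$ as $\lambda'/\lambda$, and then invoke the standard hyperbolic fixed-point criterion. The identity $DF(u)h=(\lambda'/\lambda)h$ for $h=w-\overline{w}\,u$ is checked correctly.

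The paper takes a different, more geometric route: rather than differentiating the projective map, it observes that $\sigma(u)<1$ means $\lambda$ is the dominant eigenvalue of $\pi_\Theta$, and then appeals directly to Lemma~\ref{trivial1} (the Perron--Frobenius statement that iterates $A^n v/\|A^n v\|$ converge to the dominant eigendirection for every $v$ off the subdominant eigenspace). This immediately gives attraction of $u$ for the projective map without ever writing down $DF(u)$. Your approach has the advantage of making the local contraction rate $|\lambda'|/\lambda$ explicit and of fitting cleanly into the usual discrete-dynamics framework; the paper's approach is shorter because Lemma~\ref{trivial1} has already been stated, and it gives a slightly stronger conclusion (convergence from \emph{every} point off a lower-dimensional set, not just from a small neighbourhood).
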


\begin{proof}
We prove item (a). Let $u$ be a $n$-periodic point of $\skProjPoin{\chi}{\mathcal{S}}$ with itinerary $\Theta$ and such that  $\sigma(u)<1$. Let $\overline{u}$ be the corresponding vector in $\Pi_\xi \subset \Pi_\mathcal{S}$, where $\xi$ is either a $\mathcal{S}$-branch or a concatenation of $\mathcal{S}$-branches associated to $\mathcal{H}$, depending on whether the itinerary $\Theta$ is elementary or not.
 Since $\sigma(u)<1$, it means that the other eigenvalue of  $\pi_\xi$  is less  than $\lambda$. The result follows by Lemma \ref{trivial1} which says that initial conditions are attracted to the eigendirection associated to the greatest eigenvalue.  
 The proof of (b) is analogous.
\end{proof}

In order to study the  projective map $\skProjPoin{\chi}{\mathcal{S}}: D_\mathcal{S}^* \to \Delta_\mathcal{S}$, 
we identify $\Delta_{\xi_k}$ with $J_k$, where $k$ is over the number the $\mathcal{S}$-branches. 
 With these identifications, we define a map $\varphi : [0,m]\to[0,m]$,  where $m:=\# \mathcal{S}$. This map describes the dynamics of the projective map $\skProjPoin{\chi}{\mathcal{S}}$. As an abuse of language, we also call this map as the \emph{projective map}. 

\begin{table}[htb]

\tiny
\begin{center}
\begin{tabular}{|l|l|l|}
\toprule
Projective map $\varphi$ &   Sector in $\Pi_\mathcal{S}$ \qquad  \qquad &  {Phase space} \qquad \\
\bottomrule
&&\\
Stable fixed point for $\pi_{\xi}$  & Stable eigendirection in $\Pi_\xi$  &  Stable elementary cycle  \\  &&\\
 \hline \hline 
 &&\\
Unstable fixed point for $\pi_{\xi}$  & Unstable eigendirection in $\Pi_\xi$  &  Unstable elementary cycle  \\  &&\\
 \hline \hline 
 &&\\

Stable fixed point for $\pi_{\xi_1\oplus\xi_2}$  & Stable eigendirection in $\Pi_{\xi_1\oplus\xi_2}$  &  Stable cycle  \\  && (concatenation of two branches) \\&&\\
 \hline \hline  
 &&\\
Unstable fixed point for $\pi_{\xi_1\oplus\xi_2}$  &  Unstable eigendirection in $\Pi_{\xi_1\oplus\xi_2}$  &  Unstable cycle  \\  && (concatenation of two branches) \\&&\\
 \hline \hline 
 &&\\  
No fixed point in $\Delta_\xi$  &  Strongest eigendirection in $\Pi_\xi$ &  Initial conditions are repelled \\
  &  lies outside  $\Pi_\xi$ &  \\  &&  \\
\bottomrule
\end{tabular}
\end{center}
\bigskip
\caption{\small{The dictionary between the dynamics of the the projective map, the dual cone and the phase space, for $\xi\in B_\mathcal{S}$ (elementary cycle) and $\xi_1, \xi_2\in B_\mathcal{S}$ (non-elementary cycles).}}\label{notationC}
\end{table}

\begin{rem}
\label{invariant manifold}
The existence of an unstable invariant line within a sector of $\Pi_\mathcal{S}$ has two implications in terms of dynamics: first, the associated cycle is unstable; secondly,  there is an invariant compact manifold of dimension two in the phase space accumulating on the corresponding cycle. 
This will be used in Corollary \ref{Lema19} to show where the manifold $\overline{\mathcal{M}_{\boldsymbol{\mu}}}$ ``glues''.
\end{rem}

\section{Computer aided analysis of the projective map}
\label{sec:analysis_on_dual}
 In this section, we put together the  established theory  to study the stability of the  heteroclinic cycles of $\mathcal{H}$ listed in Lemma \ref{Lem:het_cycles}. All the results rely on  system \eqref{eq:poly_rep_4}.

\subsection{Procedure}
We give a description of our method, locating its theoretical background in the previous section. Our starting point is the heteroclinic network $\mathcal{H}$ given in Lemma \ref{Lem:het_cycles} formed by 6 cycles, and the vector field $f_{\boldsymbol{\mu}}$ \eqref{eq:poly_rep_4} defined in an interval $\mathcal{I}$ where the interior equilibrium $\mathcal{O}_{\boldsymbol{\mu}}$ exists.  \\
\begin{enumerate}
	\item Compute the character map $\chi^v$ of $f_{\boldsymbol{\mu}}$ and draw its flowing-edge graph (Definition \ref{def:character_1});  \\
	\item Find a structural set $\mathcal{S}$ associated to $\mathcal{H}$  and determine all associated $\mathcal{S}$-branches (Definition \ref{def:structural_set});\\
	\item Write explicitly  the skeleton map $\pi_\mathcal{S}$  associated to all possible $\mathcal{S}$-branches $\xi$ with matrix $M_\xi$ (see Definition \ref{def:skltn_flow_map_S} and Expression \eqref{matrix1}). Note that $M_\xi$ just depends on the eigenvalues of $f_{\boldsymbol{\mu}}$ at the equilibria and the architecture of $\mathcal{H}$;	\\

	\item For the periodic points of the skeleton map, define all heteroclinic cycles $\mathcal{H}'$ (given by possible concatenation of  branches) and compute the eigenvalues and eigenvectors of $M_{\mathcal{H}'}$. Every matrix $M_{\mathcal{H}'}$ is a two-dimensional projection of $\RR^6$ and has exactly 4 zero eigenvalues. This is why  this set will be seen as subsets of $\RR^2$;  \\ 
	
	\item Identify the eigenvectors associated to to the greatest eigenvalues and, according to their location in the dual cone,  use Lemma ~\ref{trivial1}   to determine its stability;  \\
	
\item Intersect the eigenvectors (associated to the greatest eigenvalues) with the hyperplane $\overline{u}=1$ (Subsection \ref{sec:projective_map});  \\

\item Define the projective map $\skProjPoin{\chi}{\mathcal{S}}$  for the corresponding $\mathcal{S}$-branches and analyse their periodic points. Fixed points of $\skProjPoin{\chi}{\mathcal{S}}$ correspond to eigendirections of the corresponding matrices $M_\xi$. By computing the Perron eigenvalue associated to each periodic point (Definition \ref{perron_eig}), we determine its stability by applying Proposition \ref{maximum_ratio_prop}.\\
\end{enumerate}
 
Our route in this Section is to  pass from (1), (2) to the projective map defined in (6) to classify the stability of a given subcycle of $\mathcal{H}$. This is the main novelty of our article.

\subsection{Structural set}

We see how the analysis on the dual allows us to draw conclusions about the stability of heteroclinic cycles in $[0,1]^3$.
For $\boldsymbol{\mu} \in \mathcal{I}$, all twelve edges of $[0,1]^3$  correspond to heteroclinic connections and will be called by $\gamma_1,\ldots, \gamma_{12}$,  according to Table~\ref{ex:edges} and Figure \ref{fig:het_cycles3}.  
\begin{figure}[h]
\includegraphics[width=7cm]{./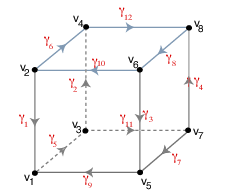}
\caption{\small{Terminology for the twelve different paths on $\mathcal{H}$ displayed in Table \ref{ex:edges}.}}
 \label{fig:het_cycles3}
\end{figure}

\begin{table}[h]
\centering
\begin{tabular}[c]{cccc}
\\
\hline
\\[-3mm]
$\gamma_1=[v_2 \rightarrow v_1]$ & $\gamma_2=[ v_3 \rightarrow  v_4]$ & $\gamma_3=[v_6 \rightarrow v_5]$ & $\gamma_4=[v_7\rightarrow v_8]$ \\
$\gamma_5=[ v_1 \rightarrow v_3] $ & $\gamma_6=[v_2\rightarrow v_4]$ & $\gamma_7=[v_7 \rightarrow v_5]$ & $\gamma_8=[ v_8 \rightarrow v_6]$ \\
$\gamma_9=[ v_5 \rightarrow v_1]$ & $\gamma_{10}=[ v_6 \rightarrow v_2]$ & $\gamma_{11}=[ v_3 \rightarrow v_7]$ & $\gamma_{12}=[ v_4 \rightarrow v_8]$ \vspace{1mm} \\
\hline
\end{tabular}
\vspace{.2cm}
\caption{\footnotesize{Edge labels.}}\label{ex:edges}
\end{table}

Looking at Figure~\ref{fig:het_cycles3}. we can see that
$$
\mathcal{S}=\{ \, \gamma_5=[ v_1 \rightarrow v_3]\,; \,\,\, \gamma_8=[v_8 \rightarrow v_6] \, \}
$$
is a  \emph{structural set} for the heteroclinic network $\mathcal{H}$ in $[0,1]^3$ (Definition~\ref{def:structural_set}),
whose $\mathcal{S}$-branches (Definition~\ref{def:branchs}) are displayed in Table~\ref{branch_table}.
We can see also that:\\
\begin{itemize}
	\item there is only one path, $\xi_1$, that starts and ends at $\gamma_5$; \\
	\item there are two paths, $\xi_2$ and $\xi_3$, starting at $\gamma_5$ and ending at $\gamma_8$;\\
	\item there are two paths, $\xi_4$ and $\xi_5$, starting at $\gamma_8$ and ending at $\gamma_5$;\\
	\item there is only one path, $\xi_6$, that starts and ends at $\gamma_8$.\\
\end{itemize}

Define $\Pi_\mathcal{S}=\Pi_{\gamma_5}\cup \Pi_{\gamma_8}$ where (Figure \ref{fig:het_cycles4}) $$\Pi_{\gamma_5}=\skDomPoin{\chi}{\xi_1}\cup \skDomPoin{\chi}{\xi_2}\cup \skDomPoin{\chi}{\xi_3} \qquad \text{and} \qquad \Pi_{\gamma_8}=\skDomPoin{\chi}{\xi_4}\cup \skDomPoin{\chi}{\xi_5}\cup \skDomPoin{\chi}{\xi_6}.$$

\begin{figure}[h]
\includegraphics[width=13.8cm]{./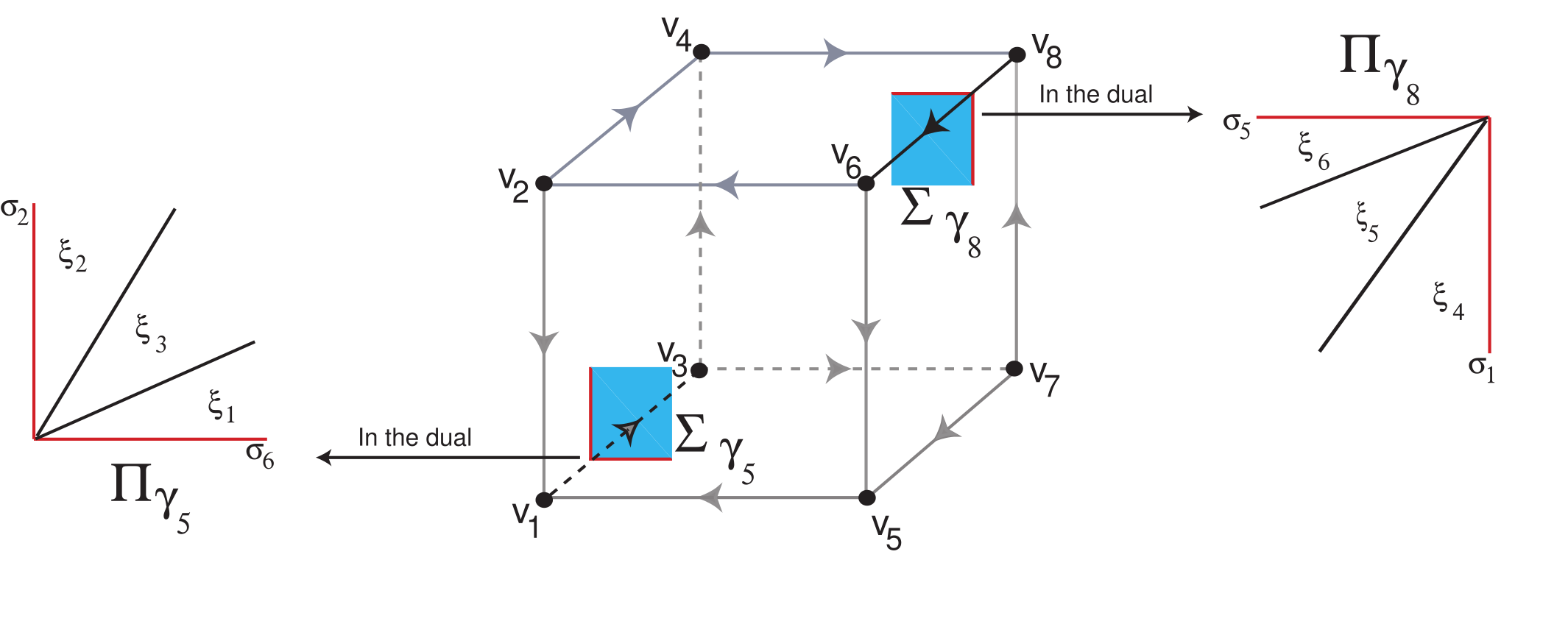}
\caption{\small{ Illustration of the cross sections  $\Sigma_{\gamma_5}$ and $\Sigma_{\gamma_8}$  (in the phase space), and $\Pi_{\gamma_5}$ and $\Pi_{\gamma_8}$ (in the dual set). The letters $\xi$ and $\gamma$ are associated to $\mathcal{S}$--branches and heteroclinic connections, respectively.}}
 \label{fig:het_cycles4}
\end{figure}

\begin{table}[h]
\centering
\begin{tabular}{c|cc}
From\textbackslash To & $\gamma_5$ & $\gamma_8$ \vspace{1mm} \\
\hline 
\\[-3mm]
$\gamma_5$ & $\xi_1$ & \; $\xi_2$,  $\xi_3$ \; \\[2mm]
$\gamma_8$ & \; $\xi_4$,  $\xi_5$ \; & $\xi_6$ \vspace{1mm} \\
\hline
\end{tabular}

\vspace{-5mm}
{\begin{align*}
\\
\hline
\\[-3mm]
\xi_1 &=\{\gamma_5,\gamma_{11},\gamma_7,\gamma_{9},\gamma_5\} 	& \xi_2 &=\{\gamma_5,\gamma_2,\gamma_{12},\gamma_8\}		& \xi_3 &=\{\gamma_5,\gamma_{11},\gamma_4,\gamma_8\} \\[1mm]
\xi_4 &=\{\gamma_8,\gamma_{10},\gamma_{1},\gamma_5\} 					& \xi_5 &=\{\gamma_8,\gamma_{3},\gamma_9,\gamma_5\}		& \xi_6 &=\{\gamma_8,\gamma_{10},\gamma_6,\gamma_{12},\gamma_8\} \vspace{1mm} \\[1mm]
\hline
\end{align*}}

\vspace{-6mm}
\caption{\footnotesize{$\mathcal{S}$-branches associated to $\mathcal{H}$.}} \label{branch_table}
\end{table}

\subsection{Dynamics on the projective map}\label{subsec:the_dual}
We  consider now the skeleton map  $\skPoin{\chi}{\mathcal{S}}:D_\mathcal{S}^{\ast} \to\Pi_{\mathcal{S}}$  whose domain is depicted in Figure~\ref{2D_Cones_paths}.

Because the remaining coordinates vanish, we consider the coordinates
$(u_2,u_6)$ on $\Pi_{\gamma_5}$ and $(u_1,u_5)$ on $\Pi_{\gamma_8}$.
Table~\ref{branch:table_paths} provides the matrix representation and the corresponding defining
conditions for all the branches of the skeleton map  $\skPoin{\chi}{\mathcal{S}},$ with respect to
the previous coordinates.
As already referred at the end of Subsection \ref{ss: paths}, in all  domains $\Pi_{\xi_j}$, the inequalities $u_1\geq 0$,  $u_2\geq 0$,  $u_5\geq 0$
and  $u_6\geq 0$ are implicit.

\begin{figure}[h]
\includegraphics[width=13cm]{./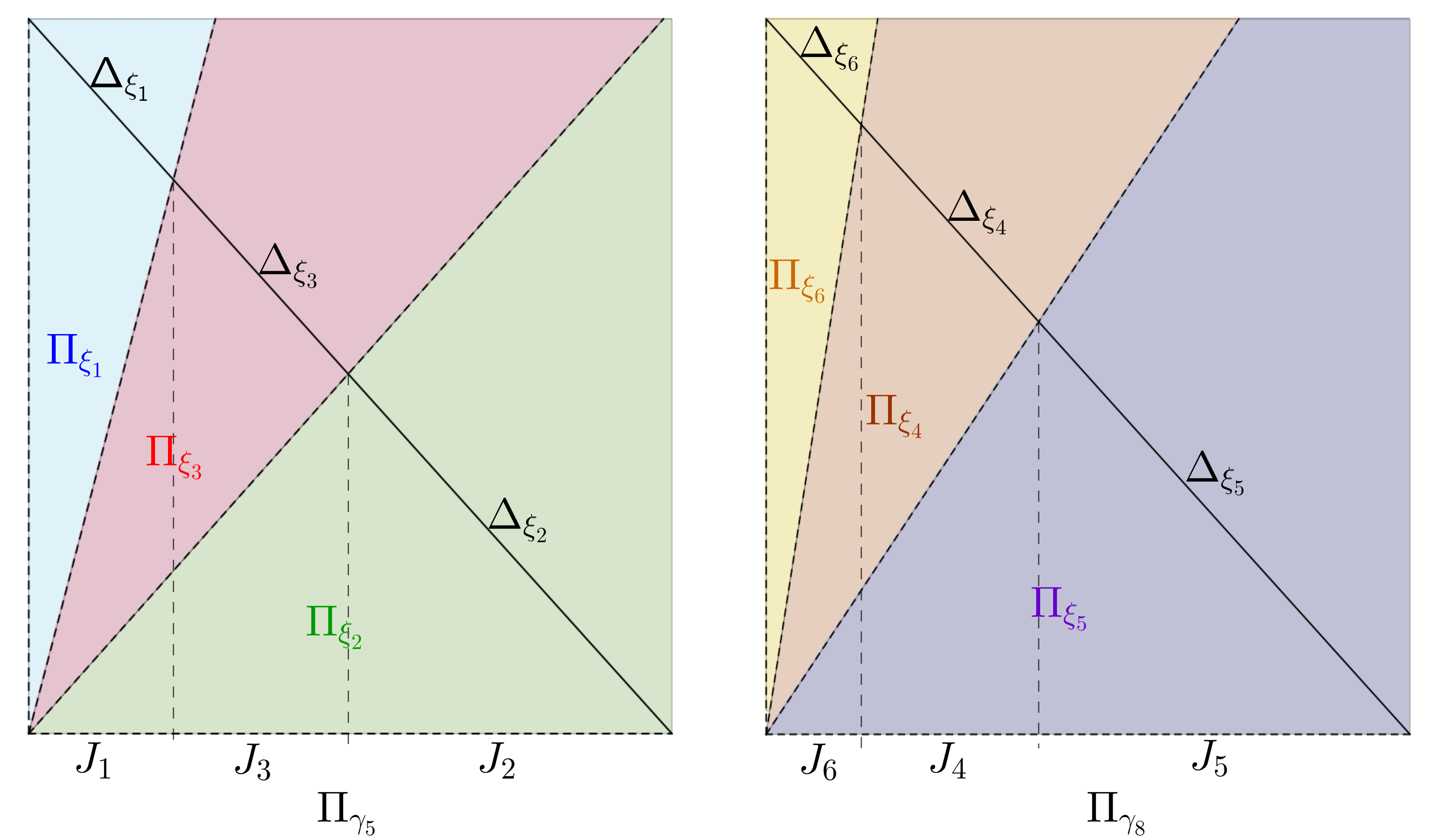}
\vspace{-.7cm}
\caption{\footnotesize{The domains $\skDomPoin{\chi}{\xi_1}$, $\skDomPoin{\chi}{\xi_2}$, $\skDomPoin{\chi}{\xi_3} \subset \Pi_{\gamma_5}$ (left), and $\skDomPoin{\chi}{\xi_4}$, $\skDomPoin{\chi}{\xi_5}$, $\skDomPoin{\chi}{\xi_6} \subset \Pi_{\gamma_8}$ (right), of the skeleton flow  map  $\skPoin{\chi}{\mathcal{S}}:\skDomPoin{\chi}{\mathcal{S}} \to\Pi_{\mathcal{S}}$, with $\boldsymbol{\mu}=\frac{850}{11}$.
Moreover, the domains $\Delta_{\xi_1}$, $\Delta_{\xi_2}$, $\Delta_{\xi_3}\subset \Delta_{\gamma_5}$ (left), and $\Delta_{\xi_4}$, $\Delta_{\xi_5}$, $\Delta_{\xi_6}\subseteq \Delta_{\gamma_8}$ (right), of the projective $\mathcal{S}$-map $\skProjPoin{\chi}{\mathcal{S}}:\Delta_\mathcal{S}\to \Delta_\mathcal{S} $.
}}
 \label{2D_Cones_paths}
\end{figure}

To represent the  projective map $\skProjPoin{\chi}{\mathcal{S}}: D_\mathcal{S}^{\ast} \to \Delta_\mathcal{S}$ (Definition \ref{def: proj}), 
we identify $\Delta_{\xi_k}$ with $J_k$, where $k=1,2,3$, and $1+\Delta_{\xi_\ell}$ with $J_\ell$ and  $\ell=4,5,6$. 
Hence, we are identifying
$\Delta_{\gamma_5}$ with $[0,1]$, 
$\Delta_{\gamma_8}$ with $[1,2]$ and $\Delta_\mathcal{S}$ with $[0,2]$.
With these identifications, we define the map $\varphi_{\boldsymbol{\mu}}:[0,2]\to[0,2]$ given in~\eqref{proj:branches}. See its graph in Figure~\ref{Iterated_Projective} for two different values of $\boldsymbol{\mu}$.

\begin{small}
\begin{equation} 
\varphi_{\boldsymbol{\mu}} (x) = \left\{\begin{array}{ll}

\frac{588 (\boldsymbol{\mu} -176) x}{79 (65 \boldsymbol{\mu} -6834) x-518 (\boldsymbol{\mu} -18)} \,,
& x \in \left[ 0 , \frac{74}{329} \right[ = J_1  \\ \\[-2mm]

\frac{2 (74 (\mu -78)+(131 \mu -55298) x)}{74 (\mu +34)+(131 \mu -92146) x} \,,
& x \in \left] \frac{74}{329} , \frac{74}{149} \right] = J_3  \\ \\[-2mm]

\frac{324 \mu +(226 \mu -160251) x-24674}{162 \mu +(113 \mu -141380) x+2380} \,,
& x \in \left] \frac{74}{149} , 1 \right[ = J_2  \\ \\[-2mm]

\frac{-3476 \mu +(2761 \mu -412581) x+419016}{-2453 \mu +158 (11 \mu -1530) x+248175} \,,
&  x \in \left[ 1 , \frac{4137+22\boldsymbol{\mu}}{4236+11\boldsymbol{\mu}} \right[ = J_6 \\ \\[-2mm]

-\frac{4 (-26 \mu +(13 \mu -2772) x+2889)}{-94 \mu +(47 \mu +49212) x-48789} \,,
& x \in \left] \frac{4137+22\boldsymbol{\mu}}{4236+11\boldsymbol{\mu}} , \frac{75+2\boldsymbol{\mu}}{84+\boldsymbol{\mu}} \right] = J_4  \\ \\[-2mm]

-\frac{42 (-6 \mu +(3 \mu -530) x+626)}{(\mu +96270) x-2 (\mu +52374)} \,,
& x \in \left] \frac{75+2\boldsymbol{\mu}}{84+\boldsymbol{\mu}} , 2 \right[ = J_5

\end{array}
\right. .
\label{proj:branches}
\end{equation}
\end{small}

The points $0$, $1$ and $2$ correspond to the invariant boundary lines of the domains $\skDomPoin{\chi}{\gamma_5}$ and $\skDomPoin{\chi}{\gamma_8}$. They are fixed points for the projective map $\varphi_{\boldsymbol{\mu}}$, associated to initial conditions lying on the cube's boundary.

\begin{figure}[h]
    \centering
	\includegraphics[width=13cm]{./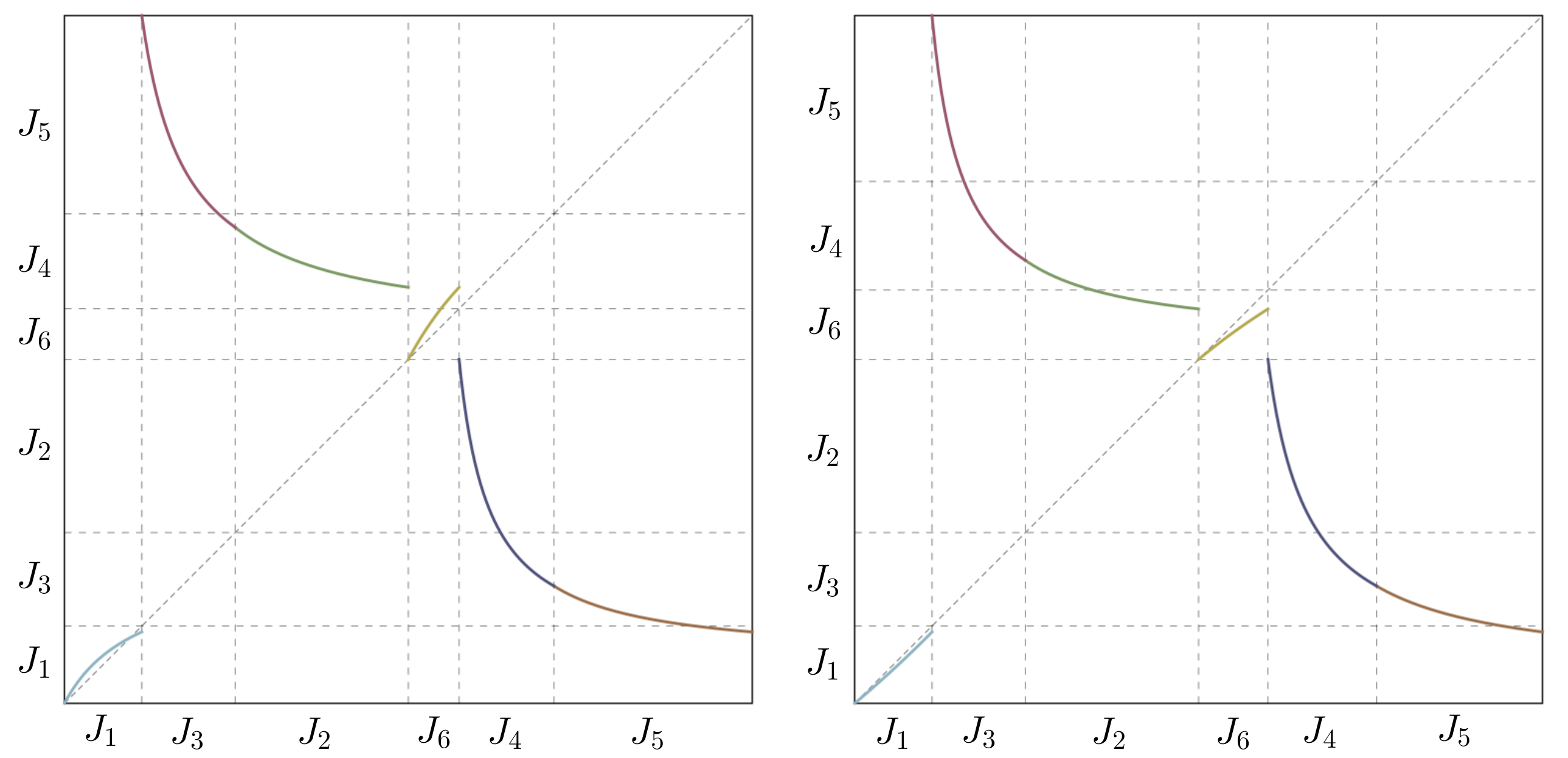}
\caption{\footnotesize{The projective map  $\varphi_{\boldsymbol{\mu}}:[0,2]\to[0,2]$ with
$\boldsymbol{\mu}=\frac{850}{11}$ (left) and $\boldsymbol{\mu}=\frac{544}{5}$ (right),
and the corresponding domains $J_k$, for $k=1, \dots, 6$.}}
\label{Iterated_Projective}
\end{figure}

\begin{prop}\label{prop:projective_paths}
For system~\eqref{eq:poly_rep_4}\footnote{Observe that systems  \eqref{eq:poly_rep_4} and \eqref{eq:poly_rep_5} are equivalent.}, there exist $\boldsymbol{\mu}_3$, $\boldsymbol{\mu}_4$, $\boldsymbol{\mu}_5\in \mathcal{I}_1$ such that: \\
\begin{enumerate}
	\item[(a)] for $\boldsymbol{\mu}\in\left] \frac{850}{11}, \boldsymbol{\mu}_3 \right[$, the projective map $\varphi_{\boldsymbol{\mu}}$ has a unique globally attracting  fixed point in $\inter\left( J_1 \right)$;\\
	\item[(b)] for $\boldsymbol{\mu}\in\left] \boldsymbol{\mu}_3, \boldsymbol{\mu}_4 \right[$, the projective map $\varphi_{\boldsymbol{\mu}}$ has:\\
	\begin{itemize}
		\item two attracting  fixed points, one in $\inter\left( J_1 \right)$ and another in $\inter\left( J_6 \right)$; 
		\item a repelling periodic point of period two in $\inter\left( J_2 \right)$ such that its image by $\varphi_{\boldsymbol{\mu}}$ is in $\inter\left( J_4 \right)$ (cf. case $\boldsymbol{\mu}=96$ in Table \ref{tbl:projective_mu}). \\
	\end{itemize}
	\item[(c)] for $\boldsymbol{\mu}\in\left] \boldsymbol{\mu}_4, \boldsymbol{\mu}_5 \right[$, the projective map $\varphi_{\boldsymbol{\mu}}$ has: \\
	\begin{itemize}
		\item two attracting  fixed points, one in $\inter\left( J_1 \right)$ and another in $\inter\left( J_6 \right)$; 
		\item a repelling periodic point of period two in $\inter\left( J_3 \right)$ such that its image by $\varphi_{\boldsymbol{\mu}}$ is in $\inter\left( J_4 \right)$ (cf. case $\boldsymbol{\mu}=99$ in Table \ref{tbl:projective_mu}). \\
	\end{itemize}
	\item[(d)] for $\boldsymbol{\mu}\in\left] \boldsymbol{\mu}_5, 102 \right[$, the projective map $\varphi_{\boldsymbol{\mu}}$ have: \\
	\begin{itemize}
		\item two attracting  fixed points, one in $\inter\left( J_1 \right)$ and another in $\inter\left( J_6 \right)$; 
		\item a repelling periodic point of period two in $\inter\left( J_3 \right)$ such that its image by $\varphi_{\boldsymbol{\mu}}$ is in $\inter\left( J_5 \right)$ (cf. case $\boldsymbol{\mu}=101$ in Table \ref{tbl:projective_mu}). \\
	\end{itemize}
	\item[(e)] for $\boldsymbol{\mu}\in \mathcal{I}_2 \cup \mathcal{I}_3$, the projective map $\varphi_{\boldsymbol{\mu}}$  has a repelling periodic point of period two in $\inter\left( J_3 \right)$ such that its image by $\varphi_{\boldsymbol{\mu}}$ is in $\inter\left( J_5 \right)$ (cf. case $\boldsymbol{\mu}=103$ in Table \ref{tbl:projective_mu}).
\end{enumerate}
\end{prop}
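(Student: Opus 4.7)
\medskip

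\noindent\textbf{Proof proposal.} The starting observation is that each of the six branches of $\varphi_{\boldsymbol{\mu}}$ in \eqref{proj:branches} is a M\"obius transformation of the form $x\mapsto (ax+b)/(cx+d)$, whose coefficients depend polynomially (in fact linearly) on $\boldsymbol{\mu}$. Consequently the fixed-point equation $\varphi_{\boldsymbol{\mu}}(x)=x$, restricted to each interval $J_k$, reduces to a quadratic in $x$ with coefficients that are affine in $\boldsymbol{\mu}$. The plan is therefore to solve these six quadratics explicitly, keep only the roots lying in $\inter(J_k)$, and then track the motion of these roots as $\boldsymbol{\mu}$ varies across $\mathcal{I}_1$. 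The parameter thresholds $\boldsymbol{\mu}_3,\boldsymbol{\mu}_4,\boldsymbol{\mu}_5$ will be precisely those values at which a fixed or periodic point crosses a partition endpoint $\tfrac{74}{329}$, $\tfrac{74}{149}$, $\tfrac{4137+22\boldsymbol{\mu}}{4236+11\boldsymbol{\mu}}$ or $\tfrac{75+2\boldsymbol{\mu}}{84+\boldsymbol{\mu}}$.

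For the period-$2$ structure, I would enumerate the pairs $(J_k,J_\ell)$ that are combinatorially admissible given the branch structure shown in Table~\ref{branch_table} (which pairs up a branch starting at $\gamma_5$ with one returning to $\gamma_5$ via $\gamma_8$, and symmetrically). Composition of two M\"obius maps is again M\"obius, so the equation $\varphi_{\boldsymbol{\mu}}\circ\varphi_{\boldsymbol{\mu}}(x)=x$ restricted to each such pair reduces once more to a quadratic, whose discriminant and the location of its roots in $\inter(J_k)$ are again tractable as functions of $\boldsymbol{\mu}$. The proposition indicates exactly which pairs matter: $(J_2,J_4)$ in case (b), $(J_3,J_4)$ in case (c), and $(J_3,J_5)$ in cases (d)--(e). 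Defining $\boldsymbol{\mu}_4$ by the condition that a period-$2$ orbit crosses from $(J_2,J_4)$ to $(J_3,J_4)$ at the boundary $x=\tfrac{74}{149}$, and $\boldsymbol{\mu}_5$ by the condition that its image crosses from $J_4$ to $J_5$ at $x=\tfrac{4137+22\boldsymbol{\mu}}{4236+11\boldsymbol{\mu}}$, pins down both values.

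Stability is read off from derivatives. For a fixed point $x^\ast\in\inter(J_k)$, the Perron eigenvalue of the associated eigenvector in $\Pi_\mathcal{S}$ corresponds to the largest eigenvalue of $M_{\xi_k}$, and by Proposition~\ref{maximum_ratio_prop} and Lemma~\ref{trivial1} its attracting/repelling character is equivalent to $|\varphi_{\boldsymbol{\mu}}'(x^\ast)|<1$ or $>1$. The same criterion applies for period-$2$ points using the chain rule on $\varphi_{\boldsymbol{\mu}}\circ\varphi_{\boldsymbol{\mu}}$. The threshold $\boldsymbol{\mu}_3$ is then identified as the smallest value of $\boldsymbol{\mu}$ at which the fixed point of the branch $J_6$ enters $\inter(J_6)$ and is attracting, detected by a quadratic in $\boldsymbol{\mu}$; below $\boldsymbol{\mu}_3$ only the fixed point of $J_1$ survives, which must then attract all of $[0,2]$ since the other branches admit no periodic orbits in their interiors (a fact again read from the explicit formulas).

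The main obstacle is not conceptual but bookkeeping: one must keep track of six rational branches, their fixed points, all admissible $2$-cycles and the order in which these objects cross the moving partition endpoints. Since every equation encountered is at most quadratic in $x$ with coefficients linear in $\boldsymbol{\mu}$, the analysis is entirely algorithmic and can be executed in computer algebra (as the authors indicate in Section~\ref{sec:software_code}). The output is a finite list of algebraic numbers $\boldsymbol{\mu}_3<\boldsymbol{\mu}_4<\boldsymbol{\mu}_5$ in $\mathcal{I}_1$ together with explicit formulas for the fixed and period-$2$ points and their Perron eigenvalues, from which items (a)--(e) follow by case-by-case verification; the existence assertion for the cases in $\mathcal{I}_2\cup\mathcal{I}_3$ in (e) then follows because the same period-$2$ orbit on $(J_3,J_5)$ persists past $\boldsymbol{\mu}=102$ with unchanged stability, its formula being continuous in $\boldsymbol{\mu}$.
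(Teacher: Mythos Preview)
Your proposal is correct and arrives at the same conclusions by an equivalent route, but it is worth noting how the paper organises the computation differently. Rather than solving the fixed-point equations $\varphi_{\boldsymbol{\mu}}(x)=x$ on each $J_k$ as quadratics in $x$, the paper works one level up with the $2\times 2$ matrices $M_{\xi_k}$ of the skeleton maps (tabulated in Appendix~\ref{Appendix}): the eigenvectors of $M_{\xi_k}$ are written down in closed form, and a fixed point in $\inter(J_k)$ exists precisely when the dominant eigenvector lies in the sector $\Pi_{\xi_k}$, whose defining linear inequalities are also tabulated. This turns each ``does the fixed point lie in $J_k$?'' question into a direct substitution rather than a root-location problem, and yields the explicit thresholds $\boldsymbol{\mu}_3=94$, $\boldsymbol{\mu}_4=\tfrac{85251}{869}$, $\boldsymbol{\mu}_5=\tfrac{85234}{849}$ together with closed formulas for the fixed and period-$2$ points. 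Stability is then read from Proposition~\ref{maximum_ratio_prop} (saddle-value $<1$) rather than from $|\varphi_{\boldsymbol{\mu}}'(x^\ast)|$; of course the two criteria coincide, since the derivative of a projectivised linear map at an eigenline equals the ratio of the eigenvalues.

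The practical payoff of the matrix viewpoint is that the eigenvectors of $M_{\xi_1}$ and $M_{\xi_6}$ are rational in $\boldsymbol{\mu}$ with no square roots, so the sector-membership conditions are linear in $\boldsymbol{\mu}$ (your remark that $\boldsymbol{\mu}_3$ is ``detected by a quadratic in $\boldsymbol{\mu}$'' is slightly pessimistic). Your M\"obius-composition approach to the period-$2$ orbits is exactly what the paper does implicitly via the product matrices $M_{\xi_i}M_{\xi_j}$ for the concatenated branches, and leads to the same radical expressions. Either organisation is fine; the paper's is a little more transparent because the sector inequalities and eigenvectors are already available from the general theory of Section~\ref{sec:asym_dyn}.
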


\begin{proof}
The eigenvector of $M_{\xi_1}$ that depends on $\boldsymbol{\mu}$,
$$
v_1\equiv \left\{\frac{14 (\boldsymbol{\mu} -102)}{51 (\boldsymbol{\mu} -106)},1\right\}
$$
lies in the interior of $\Pi_{\xi_1}$ if and only if
$\boldsymbol{\mu}\in\mathcal{I}_1$.
For these values of the parameter, the eigenvector is the one associated to the greatest eigenvalue of $M_{\xi_1}$.
Hence,  by Proposition \ref{maximum_ratio_prop} the point $\frac{v_1}{\overline{v_1}}\in\inter\left( \Delta_{\xi_1} \right)$ corresponds to the attracting  fixed point
$$
x_1:=\frac{14 \boldsymbol{\mu} -1428}{65 \boldsymbol{\mu} -6834}\in \inter\left( J_1 \right),
$$
of $\varphi_{\boldsymbol{\mu}}$ for $\boldsymbol{\mu}\in\mathcal{I}_1$.
Moreover, for $\boldsymbol{\mu}\in\left] \frac{850}{11}, \boldsymbol{\mu}_3 \right[$, where $\boldsymbol{\mu}_3=94$, this is the unique periodic point of $\varphi_{\boldsymbol{\mu}}$ and hence 
$\frac{v_1}{\overline{v_1}}\in\Delta_{\xi_1}$ corresponds to the unique globally attracting  fixed point $x_1$ of $\varphi_{\boldsymbol{\mu}}$. This concludes the proof of $(a)$.

We can analogously see that the eigenvector of $M_{\xi_6}$ that depends on $\boldsymbol{\mu}$,
$$
v_6\equiv \left\{\frac{11 (102-\boldsymbol{\mu})}{408},1\right\}
$$
lies in the interior of $\Pi_{\xi_6}$ if and only if
$\boldsymbol{\mu}\in\left] \boldsymbol{\mu}_3, 102 \right[$.
For these values of the parameter,  this eigenvector is the one   associated to the greatest eigenvalue of $M_{\xi_6}$.
By Proposition \ref{maximum_ratio_prop}, the point $\frac{v_6}{\overline{v_6}}\in\inter\left( \Delta_{\xi_6} \right)$ corresponds to the attracting  fixed point
$$
x_2:=\frac{22 \boldsymbol{\mu} -2652}{11 \boldsymbol{\mu} -1530}\in \inter\left( J_6 \right),
$$
of $\varphi_{\boldsymbol{\mu}}$ for $\boldsymbol{\mu}\in\left] \boldsymbol{\mu}_3, 102 \right[$. Furthermore, we can see that: \\
\begin{enumerate}
	\item for $\boldsymbol{\mu}\in\left] \boldsymbol{\mu}_3, \boldsymbol{\mu}_4 \right[$, where $\boldsymbol{\mu}_4=\frac{85251}{869}$, the point
	\begin{scriptsize}$$
	x_3:=3 \sqrt{\frac{320140324 \boldsymbol{\mu}^2-60787796412 \boldsymbol{\mu} +2893236225489}{(71941 \boldsymbol{\mu} -6256224)^2}}-\frac{5 (7486 \boldsymbol{\mu} -751455)}{71941 \boldsymbol{\mu} -6256224} \in \inter\left( J_2 \right),
	$$\end{scriptsize}
	is a repelling periodic point of period two,	such that $\varphi_{\boldsymbol{\mu}}\left( x_3 \right)\in J_4$;\\ 
	
	\item for $\boldsymbol{\mu}\in\left] \boldsymbol{\mu}_4, \boldsymbol{\mu}_5 \right[$, where $\boldsymbol{\mu}_5=\frac{85234}{849}$, the point
	\begin{tiny}$$
	x_4:=3 \sqrt{\frac{2615265201481 \boldsymbol{\mu}^2-504216904560828 \boldsymbol{\mu} +24312026567983716}{(9965897 \boldsymbol{\mu} -946939158)^2}}-\frac{5 (533489 \boldsymbol{\mu} -52940718)}{9965897 \boldsymbol{\mu}
   -946939158} \in \inter\left( J_3 \right),
	$$\end{tiny}
	is a repelling periodic point of period two,	such that $\varphi_{\boldsymbol{\mu}}\left( x_4 \right)\in J_4$;\\
	
	\item for $\boldsymbol{\mu}\in\left] \boldsymbol{\mu}_5, \frac{544}{5} \right[$, the point
	\begin{scriptsize}$$
	x_5:=\frac{1}{8} \sqrt{\frac{3386009761 \mu ^2-644714033868 \mu +30745583285796}{(9157 \mu -787158)^2}}-\frac{5 (8467 \mu -845394)}{8 (9157 \mu -787158)} \in \inter\left( J_3 \right),
	$$\end{scriptsize}
	is a repelling periodic point of period two,	such that $\varphi_{\boldsymbol{\mu}}\left( x_5 \right)\in J_5$;\\
\end{enumerate}
which concludes the proof of $(b)$, $(c)$, $(d)$, and $(e)$.

\end{proof}

\begin{table}[htb]
\tiny
\begin{center}
\begin{tabular}{|c|c|c|}
\toprule \\[-2mm]
\quad $\boldsymbol{\mu}=96$ \qquad & \qquad \includegraphics[width=4cm]{./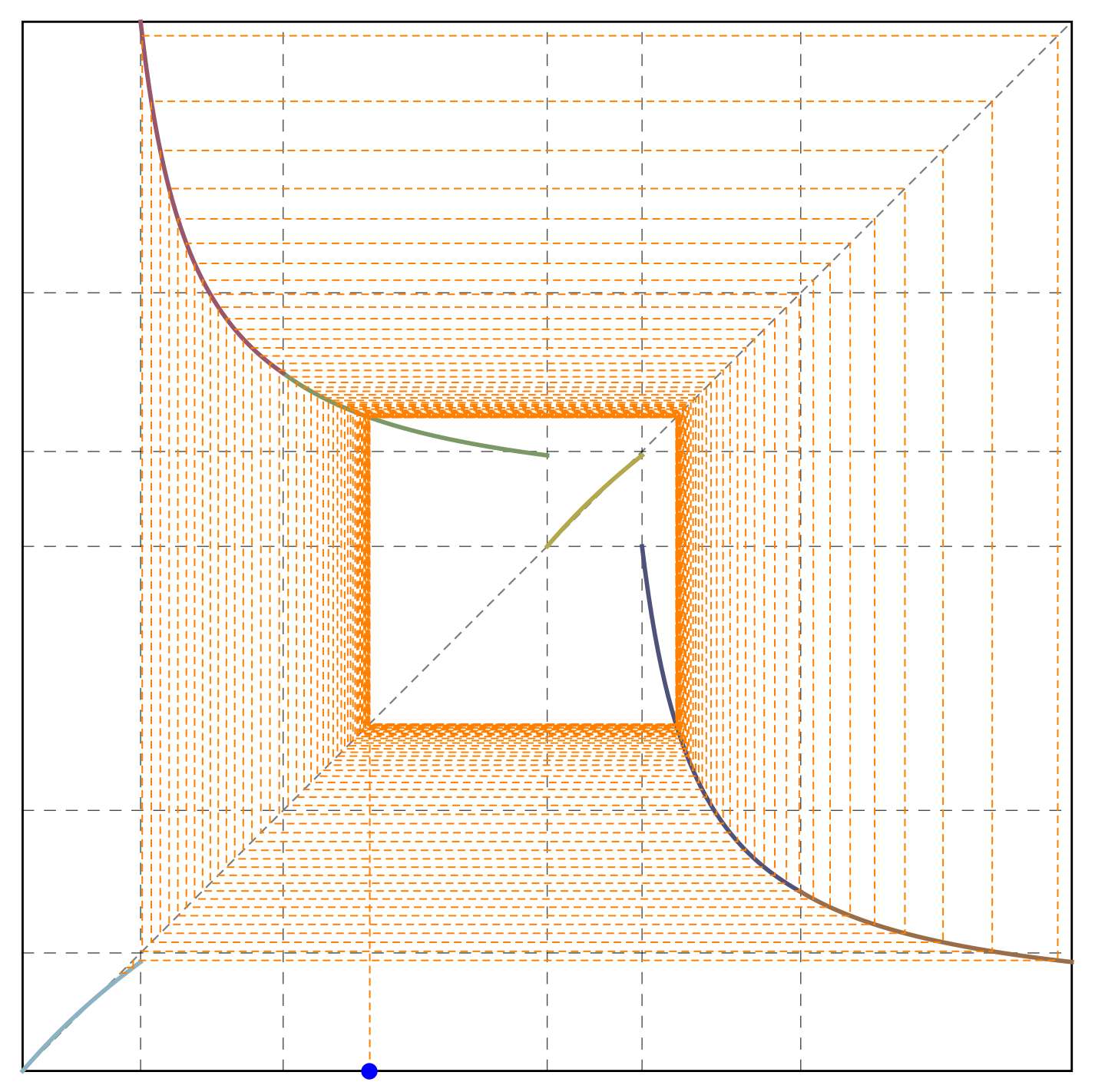} \qquad & \qquad \includegraphics[width=4cm]{./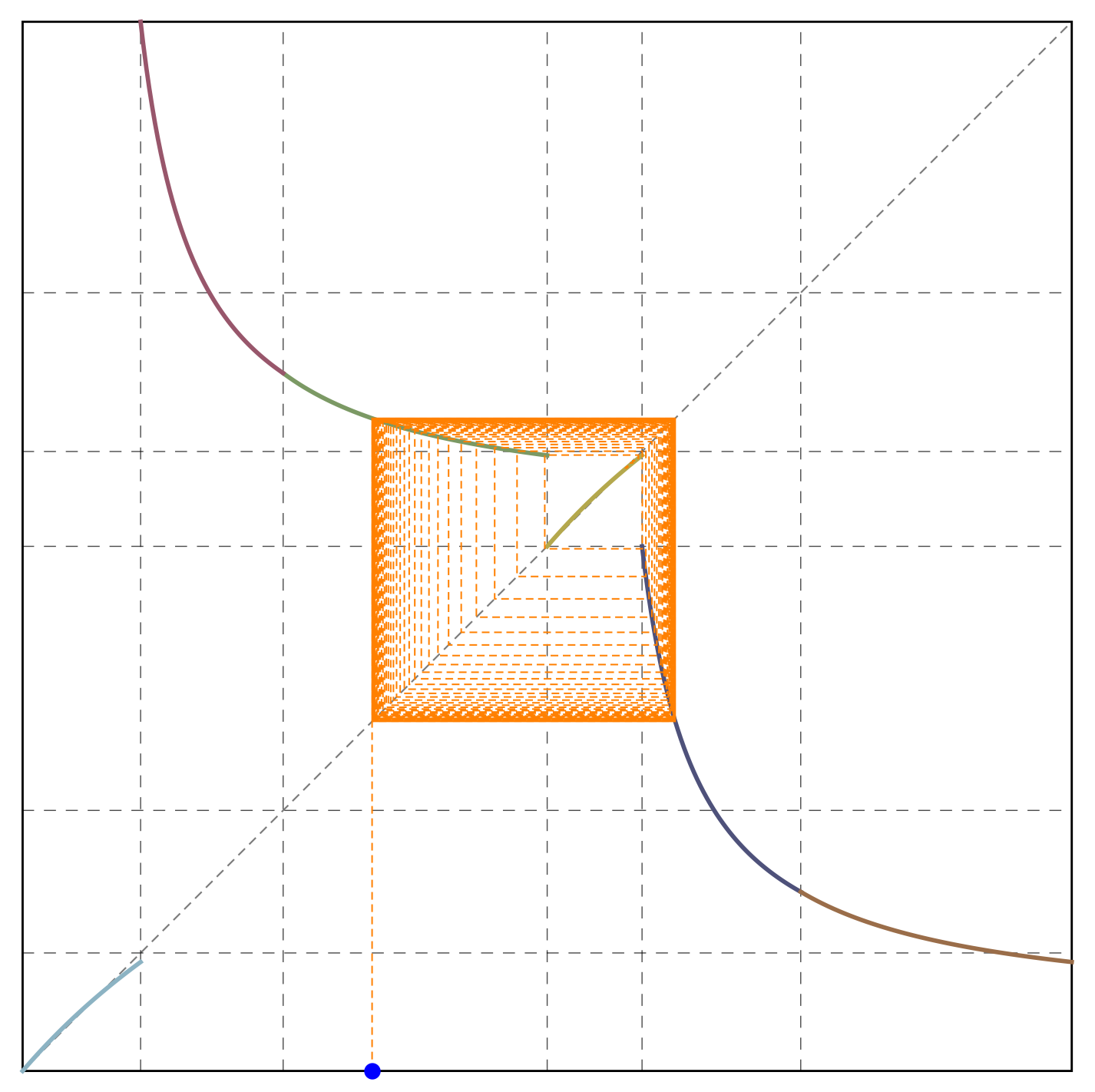} \qquad \\[2mm] \midrule \\[-2mm]

\quad $\boldsymbol{\mu}=99$ \qquad & \qquad \includegraphics[width=4cm]{./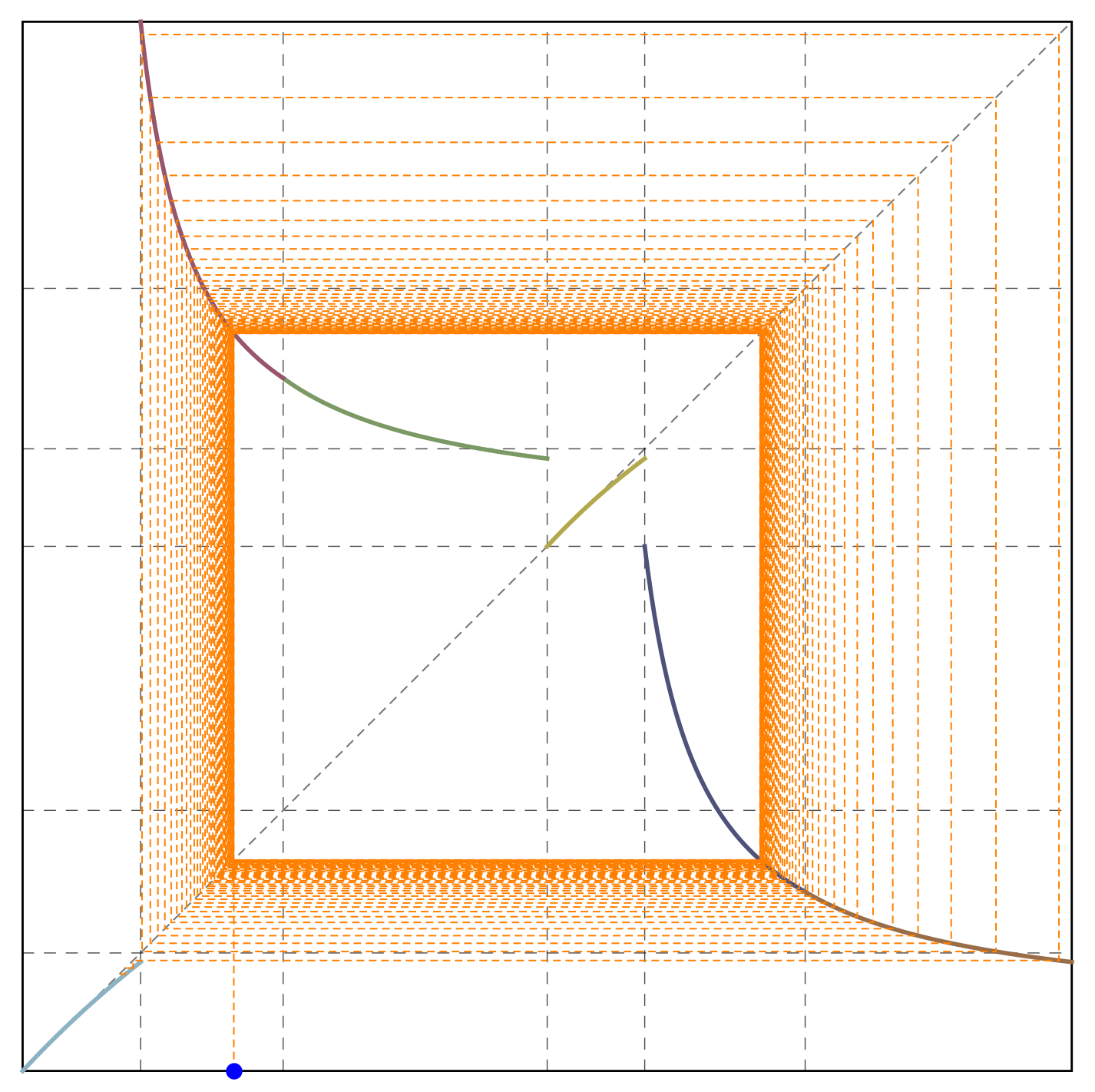} \qquad & \qquad \includegraphics[width=4cm]{./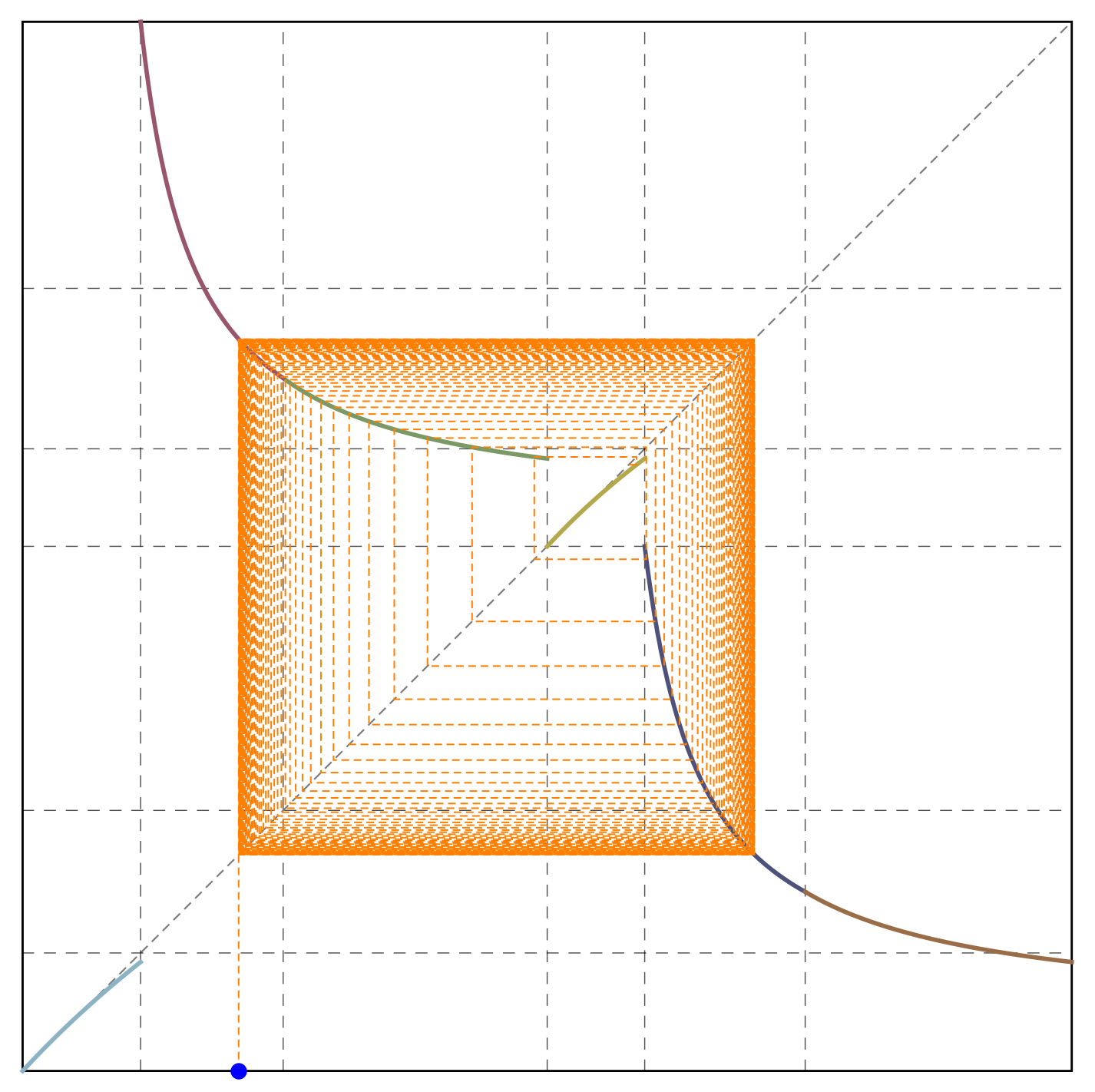} \qquad \\[2mm] \midrule \\[-2mm]

\quad $\boldsymbol{\mu}=101$ \qquad & \qquad \includegraphics[width=4cm]{./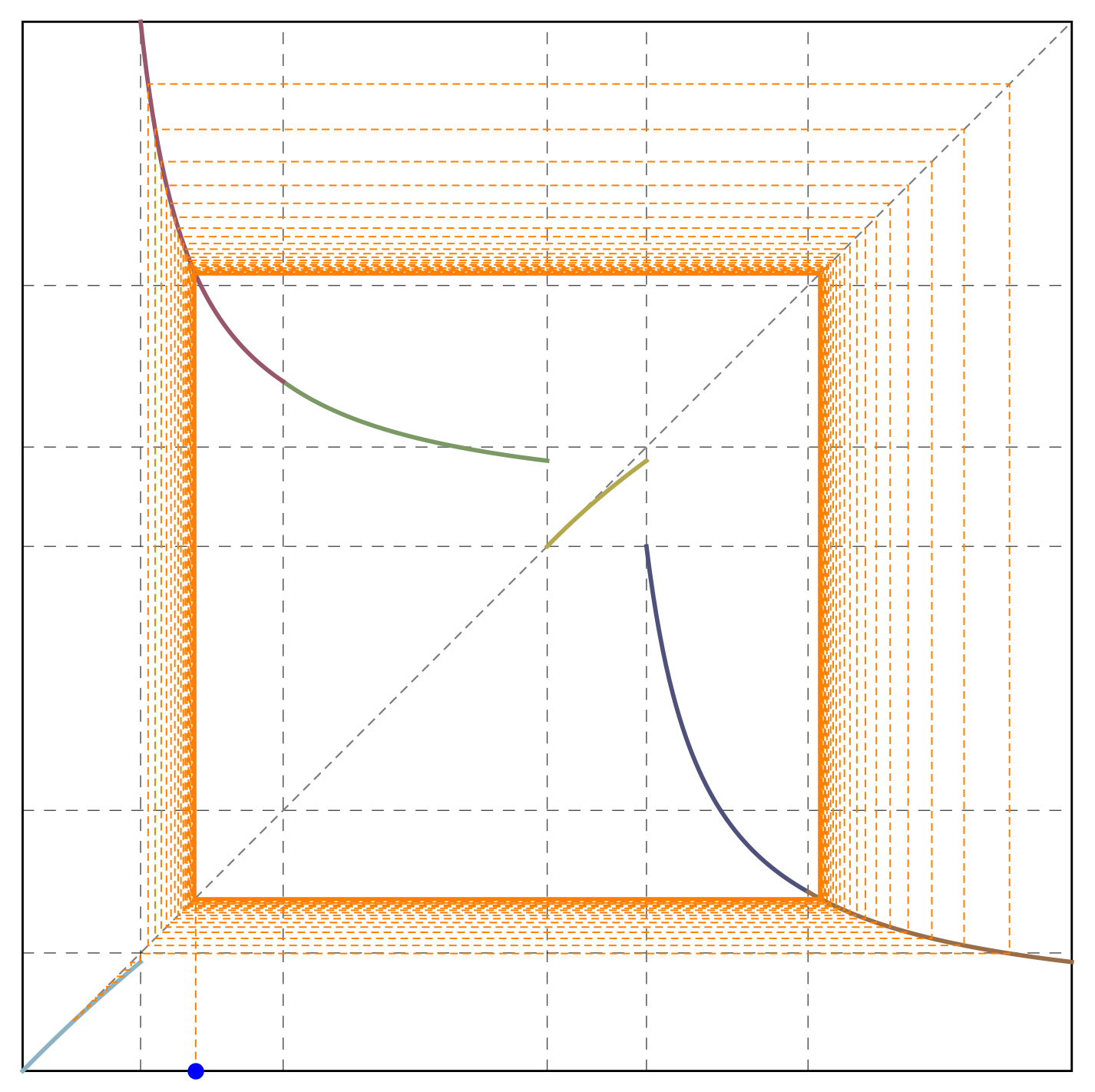} \qquad & \qquad \includegraphics[width=4cm]{./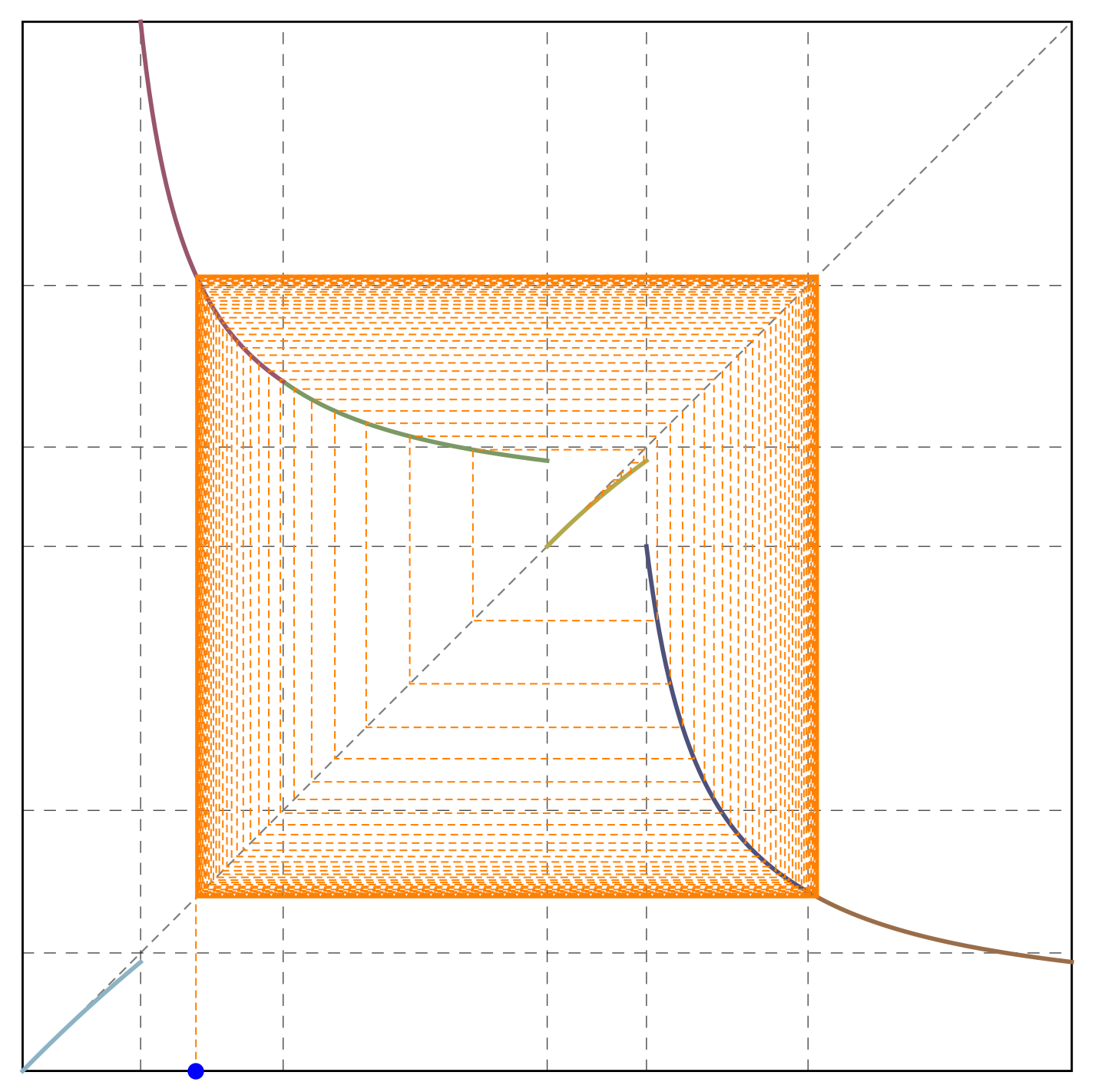} \qquad \\[2mm] \midrule \\[-2mm]

\quad $\boldsymbol{\mu}=103$ \qquad & \qquad \includegraphics[width=4cm]{./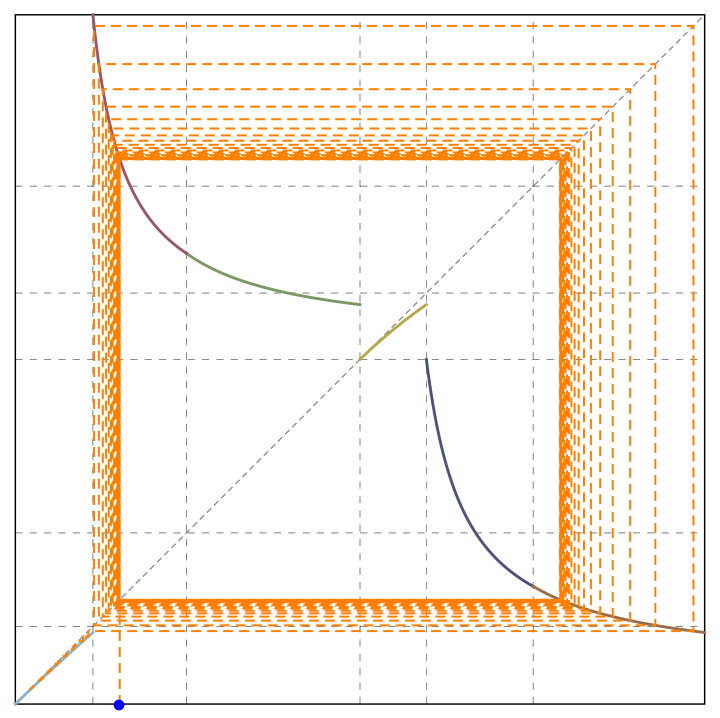} \qquad & \qquad \includegraphics[width=4cm]{./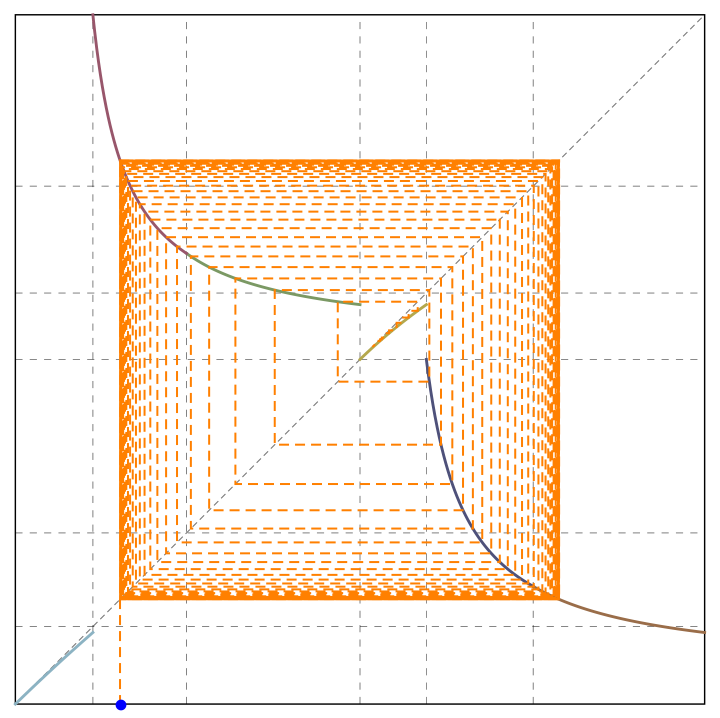} \qquad \\[2mm]
\bottomrule

\end{tabular}
\end{center}
\bigskip
\caption{\small The plot of the projective map  $\varphi_{\boldsymbol{\mu}}:[0,2]\to[0,2]$, where the blue dot in $\inter\left( J_k \right)$ is the initial condition, whith $k=2,3$ for $\boldsymbol{\mu}=96$, and $\boldsymbol{\mu}=99, 101, 103$, respectively, and the orange lines follows its 100 iterates by $\varphi_{\boldsymbol{\mu}}$.} \label{tbl:projective_mu}
\end{table}

\subsection{Stability of the heteroclinic cycles}
\mbox\\
First of all, observe that 
\begin{eqnarray*}
\mathcal{H}_1&=&\xi_6\\
\mathcal{H}_2&=&\xi_2 \oplus \xi_4\\
\mathcal{H}_3&=&\xi_2 \oplus \xi_5 \\
\mathcal{H}_4&=&\xi_3 \oplus \xi_4\\
\mathcal{H}_5&=&\xi_3 \oplus \xi_5 \\
\mathcal{H}_6&=&\xi_1
\end{eqnarray*}
where the symbol $\oplus$ means the \emph{concatenation} between the admissible paths. 
The entries of the matrix associated to an elementary cycle are all positive while those associated to a non-elementary cycle may be negative. Nevertheless those that correspond to the matrix of the concatenated path are positive. In particular, the  Perron-Frobenius theory may be applied\footnote{Note that the theory revisited in Subsection \ref{PF_review} (in particular, Lemma \ref{trivial1})  is valid for different positive real eigenvalues.}.

\begin{figure}[h]
    \centering
	\includegraphics[width=13cm]{./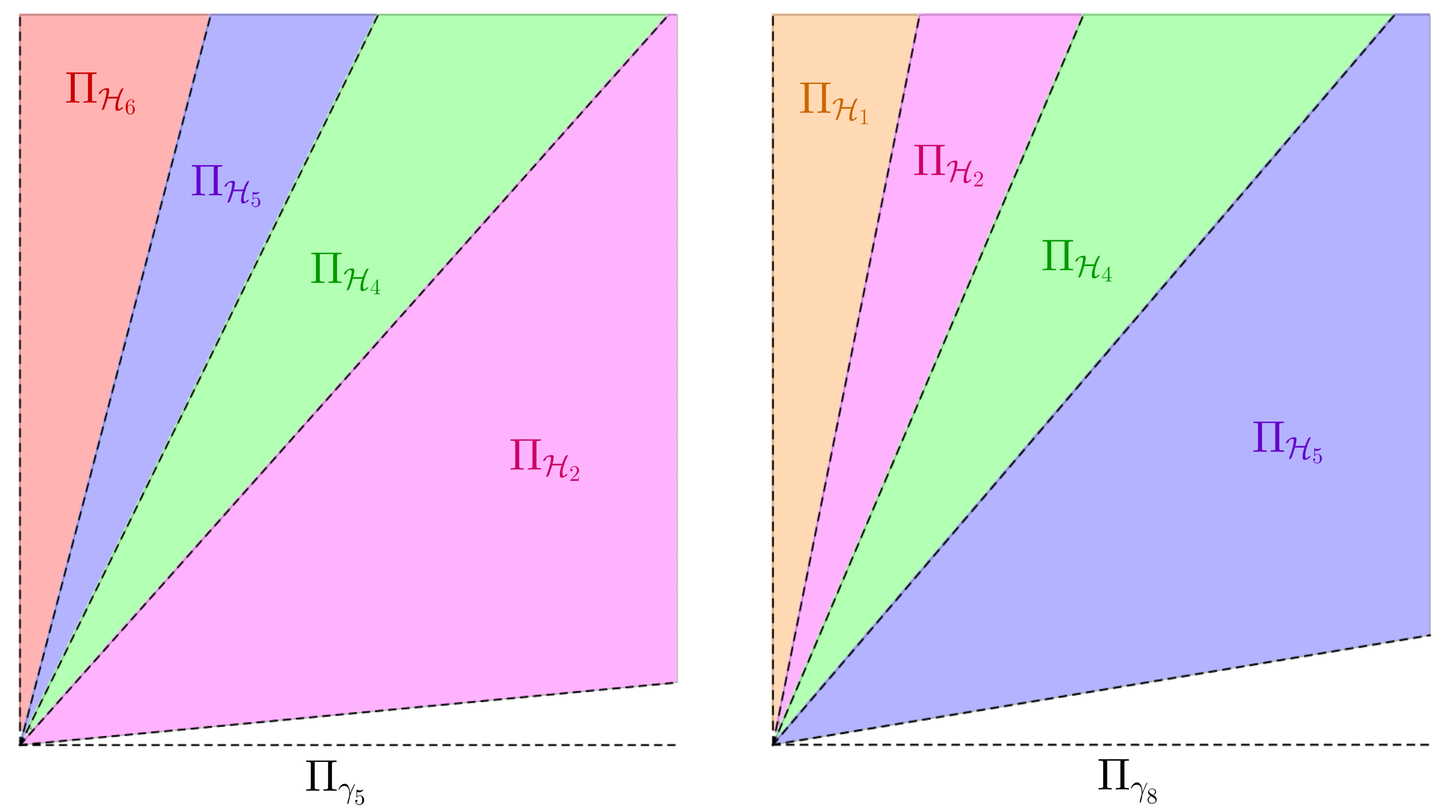}
\caption{\footnotesize{The domains $\skDomPoin{\chi}{\mathcal{H}_2}$, $\skDomPoin{\chi}{\mathcal{H}_4}$, $\skDomPoin{\chi}{\mathcal{H}_5}$, $\skDomPoin{\chi}{\mathcal{H}_6} \subset \Pi_{\gamma_5}$ (left), and $\skDomPoin{\chi}{\mathcal{H}_1}$, $\skDomPoin{\chi}{\mathcal{H}_2}$, $\skDomPoin{\chi}{\mathcal{H}_4}$, $\skDomPoin{\chi}{\mathcal{H}_5} \subset \Pi_{\gamma_8}$ (right), of the skeleton map  $\skPoin{\chi}{\mathcal{S}}:\skDomPoin{\chi}{\mathcal{S}} \to\Pi_{\mathcal{S}}$, with $\boldsymbol{\mu}=97$.}}
 \label{2D_Cones_cycles}
\end{figure}


\begin{cor}\label{prop:projective_cycles}
For system~\eqref{eq:poly_rep_5} there exist $\boldsymbol{\mu}_3$, $\boldsymbol{\mu}_4$, $\boldsymbol{\mu}_5\in \mathcal{I}_1$ such that: \\
\begin{enumerate}
	\item[(a)] for $\boldsymbol{\mu}\in\left] \frac{850}{11}, \boldsymbol{\mu}_3 \right[$
	(cf. case $\boldsymbol{\mu}=90$ in Table \ref{tbl:vectors_and_orbits_mu}): \\
	\begin{itemize}
		\item $\overline{\mathcal{M}_{\boldsymbol{\mu}}}  \cap \partial [0,1]^3 = \mathcal{H}_6$; 
		\item  the cycle $\mathcal{H}_6$ is globally asymptotically stable in the interior of the cube. \\
	\end{itemize}
	\item[(b)] for $\boldsymbol{\mu}\in\left] \boldsymbol{\mu}_3, \boldsymbol{\mu}_4 \right[$, $\overline{\mathcal{M}_{\boldsymbol{\mu}}} \cap \partial [0,1]^3 = \mathcal{H}_2$  (cf. case $\boldsymbol{\mu}=96$ in Table \ref{tbl:vectors_and_orbits_mu}); \\
		\item[(c)] for $\boldsymbol{\mu}\in\left] \boldsymbol{\mu}_4, \boldsymbol{\mu}_5 \right[$, $\overline{\mathcal{M}_{\boldsymbol{\mu}}} \cap \partial [0,1]^3 = \mathcal{H}_4$  (cf. case $\boldsymbol{\mu}=99$ in Table \ref{tbl:vectors_and_orbits_mu});  \\
		\item[(d)] for $\boldsymbol{\mu}\in\left] \boldsymbol{\mu}_5, 102 \right[$, $\overline{\mathcal{M}_{\boldsymbol{\mu}}} \cap \partial [0,1]^3 = \mathcal{H}_5$  (cf. case $\boldsymbol{\mu}=101$ in Table \ref{tbl:vectors_and_orbits_mu}).\\
\end{enumerate}
Moreover, in  Cases $(b), (c)$, and $(d)$, $\overline{\mathcal{M}_{\boldsymbol{\mu}}}$ divides the interior of the phase space in two regions, $\mathcal{U}_1$ and $\mathcal{U}_2$, such that for any initial condition in $\mathcal{U}_1$, its $\omega$-limit is the cycle $\mathcal{H}_1$, and for any initial condition in $\mathcal{U}_2$, its $\omega$-limit is the cycle $\mathcal{H}_6$.
\end{cor}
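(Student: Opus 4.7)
The plan is to read Proposition~\ref{prop:projective_paths} through the dictionary of Table~\ref{notationC}, identifying each fixed or periodic point of $\varphi_{\boldsymbol{\mu}}$ with a heteroclinic cycle of $\mathcal{H}$ via the $\mathcal{S}$-branch decompositions listed at the beginning of the subsection. Explicitly, the fixed point in $J_1=\Delta_{\xi_1}$ corresponds to the elementary cycle $\mathcal{H}_6=\xi_1$, the fixed point in $J_6=\Delta_{\xi_6}$ to $\mathcal{H}_1=\xi_6$, and each period-two orbit of $\varphi_{\boldsymbol{\mu}}$ whose itinerary alternates between $J_k$ and $J_\ell$ $(k\in\{2,3\},\ \ell\in\{4,5\})$ corresponds to the non-elementary cycle $\xi_k\oplus\xi_\ell\in\{\mathcal{H}_2,\mathcal{H}_3,\mathcal{H}_4,\mathcal{H}_5\}$.

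For case~(a) I would argue as follows. By Proposition~\ref{prop:projective_paths}\,(a) the unique periodic orbit of $\varphi_{\boldsymbol{\mu}}$ is an attracting fixed point in $\inter(J_1)$ associated to $\mathcal{H}_6$. By Proposition~\ref{prop15} the set $D_\mathcal{S}^*$ of orbits following an infinite admissible concatenation of $\mathcal{S}$-branches has full Lebesgue measure in $\Pi_\mathcal{S}$, and every such orbit is driven under $\skProjPoin{\chi}{\mathcal{S}}$ to the attracting fixed point, hence its preimage under $\Psi_\varepsilon^{-1}$ shadows $\mathcal{H}_6$ (Proposition~\ref{maximum_ratio_prop}\,(a) and Lemma~\ref{trivial1}). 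Combining this with the global asymptotic stability of $\mathcal{H}$ in $[0,1]^3\setminus\{\mathcal{M}_{\boldsymbol{\mu}},[\mathcal{O}_{\boldsymbol{\mu}}\to B_1],[\mathcal{O}_{\boldsymbol{\mu}}\to B_2]\}$ (Lemma~\ref{Lem:asymptotic_stable_cycles}), one obtains that $\omega(p)=\mathcal{H}_6$ for Lebesgue-a.e.\ $p$ in the cube; to upgrade this to the statement $\overline{\mathcal{M}_{\boldsymbol{\mu}}}\cap\partial[0,1]^3=\mathcal{H}_6$ I would invoke the third bullet of Remark~\ref{invariant manifold}, noting that the absence of any other periodic orbit of $\varphi_{\boldsymbol{\mu}}$ forces $W^u_2(\mathcal{O}_{\boldsymbol{\mu}})=\mathcal{M}_{\boldsymbol{\mu}}$ itself to accumulate on the unique attractor of the projective map.

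For cases (b)--(d), the projective map carries an attracting fixed point in each of $J_1$ and $J_6$ (hence stable cycles $\mathcal{H}_6$ and $\mathcal{H}_1$ respectively) together with a repelling period-two orbit whose itinerary identifies the corresponding non-elementary cycle: $\mathcal{H}_2$ in (b), $\mathcal{H}_4$ in (c) and $\mathcal{H}_5$ in (d). By Remark~\ref{invariant manifold}, the unstable eigendirection associated with each repelling two-periodic point of $\skProjPoin{\chi}{\mathcal{S}}$ translates into a two-dimensional compact invariant manifold in the phase space accumulating on the associated cycle. Uniqueness of compact invariant sets in $\inter([0,1]^3)\setminus\mathcal{M}_{\boldsymbol{\mu}}$ (Fact~\ref{fact:no_more_invariants}) forces this manifold to coincide with $\overline{\mathcal{M}_{\boldsymbol{\mu}}}$, delivering the stated intersection with $\partial[0,1]^3$.

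Finally, for the ``moreover'' clause, since $\overline{\mathcal{M}_{\boldsymbol{\mu}}}$ is a two-dimensional closed invariant manifold inside the three-dimensional cube and contains $\mathcal{O}_{\boldsymbol{\mu}}$ together with the two one-dimensional connections $[\mathcal{O}_{\boldsymbol{\mu}}\to B_j]$ (Fact~\ref{fact: 1D heteroclinic connection}), standard separation arguments split $\inter([0,1]^3)\setminus\overline{\mathcal{M}_{\boldsymbol{\mu}}}$ into two open connected components $\mathcal{U}_1,\mathcal{U}_2$. Applying Lemma~\ref{Lem:asymptotic_stable_cycles} again, together with Proposition~\ref{prop15}, every orbit in $\mathcal{U}_i$ eventually enters $\mathcal{T}_\varepsilon$ and its $\skProjPoin{\chi}{\mathcal{S}}$-image converges to one of the two attracting fixed points; each component is then assigned to exactly one of $\{\mathcal{H}_1,\mathcal{H}_6\}$ by checking on which side of the unstable eigendirection a representative orbit lies. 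The main obstacle I foresee is precisely this last bookkeeping step: making rigorous the passage from ``unstable eigendirection of $\pi_{\xi_k\oplus\xi_\ell}$ inside $\Pi_{\xi_k\oplus\xi_\ell}$'' to ``the global separatrix $\overline{\mathcal{M}_{\boldsymbol{\mu}}}$ of the cube partitions the basins of the two attracting cycles'', since it requires a careful treatment of how the sector decomposition of the dual cone matches the geometry of $\mathcal{M}_{\boldsymbol{\mu}}$ near each switching vertex; this is where the quasi-change of coordinates $\Psi_\varepsilon$ and Lemma~\ref{global in the dual} have to be applied uniformly along the whole network.
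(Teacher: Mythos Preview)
Your proposal is correct and follows essentially the same approach as the paper: translate Proposition~\ref{prop:projective_paths} through the dictionary of Table~\ref{notationC}, identify the repelling periodic orbit of $\varphi_{\boldsymbol{\mu}}$ with a two-dimensional invariant manifold via Remark~\ref{invariant manifold}, and then invoke Fact~\ref{fact:no_more_invariants} to conclude that this manifold must be $\overline{\mathcal{M}_{\boldsymbol{\mu}}}$. If anything, your write-up is more careful than the paper's own proof in citing the precise lemmas (Proposition~\ref{prop15}, Lemma~\ref{Lem:asymptotic_stable_cycles}, Lemma~\ref{global in the dual}) that underpin each step, and your honest flagging of the ``bookkeeping obstacle'' in the final paragraph is apt: the paper does not make that passage any more rigorous than you do, relying instead on the blanket statement that the flow is the lift of the first return map to $\Pi_\mathcal{S}$.
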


\begin{proof}
The proof follows from the analysis of the projective map performed in Proposition \ref{prop:projective_paths} and the theory developed in Section \ref{sec:asym_dyn}.

To conclude about the cycles stability we look at the eigenvectors of the matrices for each cycle. For example, in case $\boldsymbol{\mu}\in \left] \boldsymbol{\mu}_5, 102 \right[$ we can see that the eigenvector of $M_{\mathcal{H}_1}$ that belongs to the interior of the sector $\Pi_{\mathcal{H}_1}$ is the  greatest eigenvalue, and the same happens for  $M_{\mathcal{H}_6}$. The eigenvector of $M_{\mathcal{H}_5}$ that belongs to the interior of the sector $\Pi_{\mathcal{H}_5}$ is the  smallest eigenvalue (see case $\boldsymbol{\mu}=101$ in Table \ref{tbl:vectors_and_orbits_mu}). For the other cases, the analysis is analogous.

The existence of an unstable periodic point for the projective map $\varphi_{\boldsymbol{\mu}}$ implies that there exists an invariant line for the corresponding dual cone $\Pi_\mathcal{S}$. Since  the flow of system ~\eqref{eq:poly_rep_4} may be seen as the the lift of the first return map to $\Pi_\mathcal{S}$ it implies that there exists a  two-dimensional invariant manifold repelling all trajectories nearby. By Fact \ref{fact:no_more_invariants}, there are no more invariant sets besides $\mathcal{M}_{\boldsymbol{\mu}}$. Therefore, this invariant line should correspond to the cycle within $\mathcal{H}$ containing the $\omega$-limit of all points of  $ {\mathcal{M}_{\boldsymbol{\mu}}}$.
\end{proof}

\begin{table}[htb]
\tiny
\begin{center}
\begin{tabular}{|c|c|c|c|}
\toprule
$\boldsymbol{\mu}$  &  Eigenvectors of $M_{\mathcal{H}}$ in $\Pi_{\gamma_5}$   & Eigenvectors of $M_{\mathcal{H}}$ in $\Pi_{\gamma_8}$   &   Phase space $[0,1]^3$ \\[1mm]
\toprule \\[-2mm]
$\boldsymbol{\mu}=90$ &
\includegraphics[width=4.5cm]{./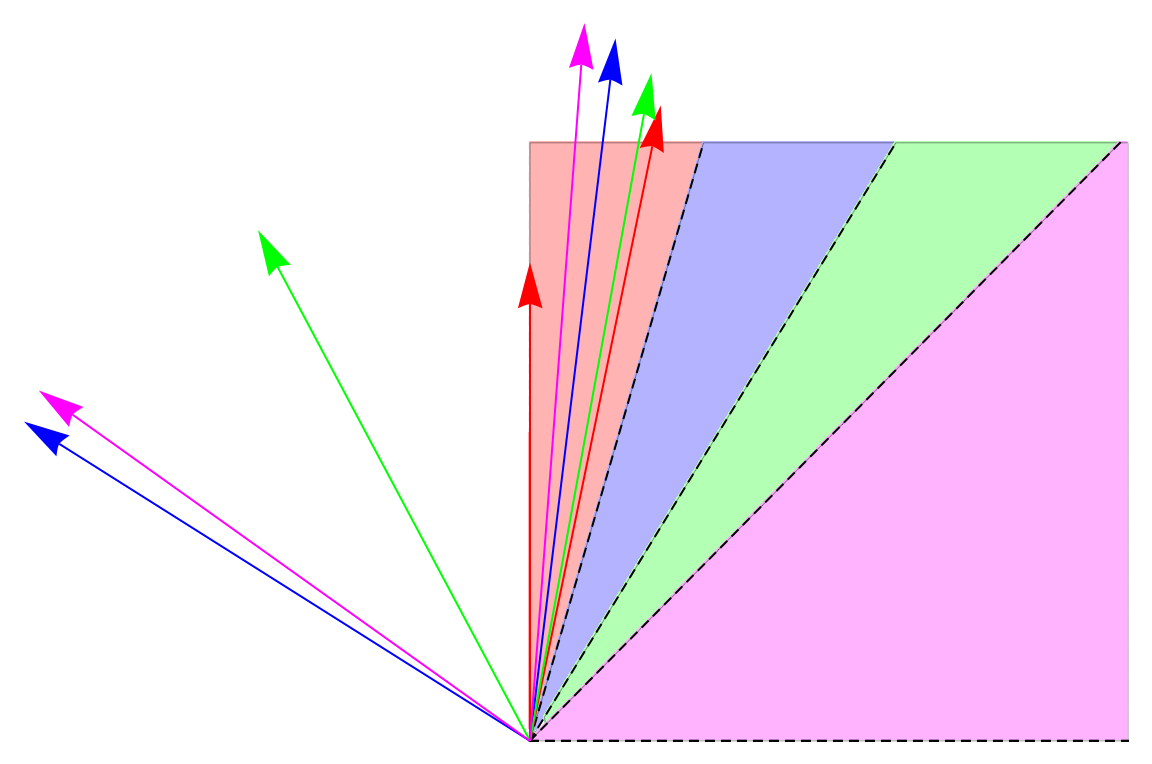} &
\includegraphics[width=4.5cm]{./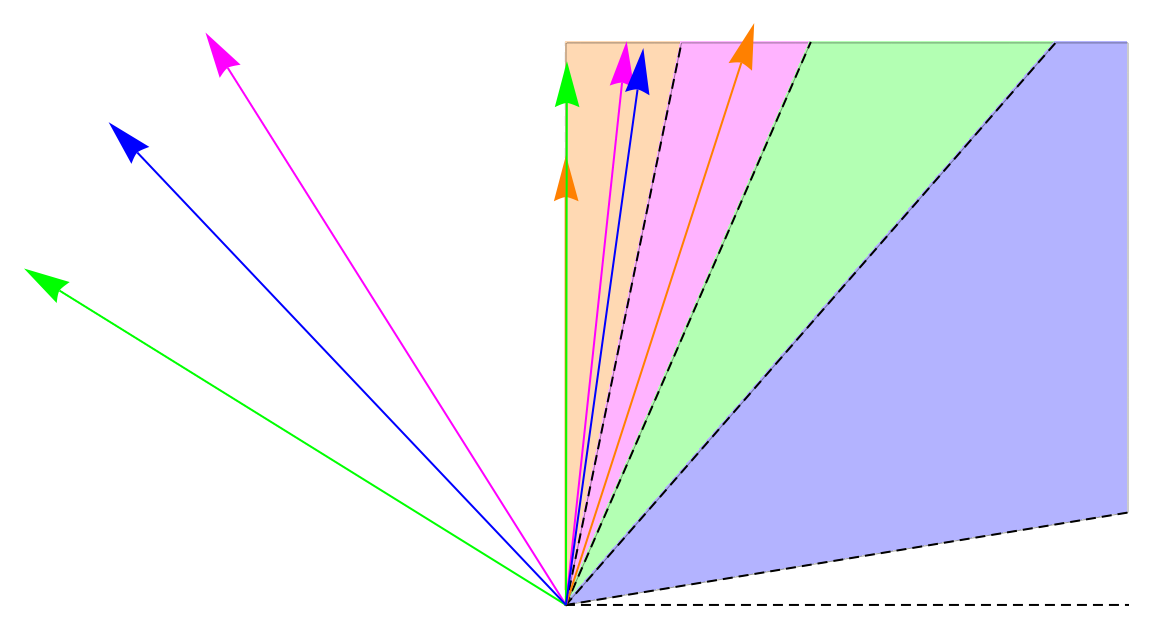} &
\includegraphics[width=3.2cm]{./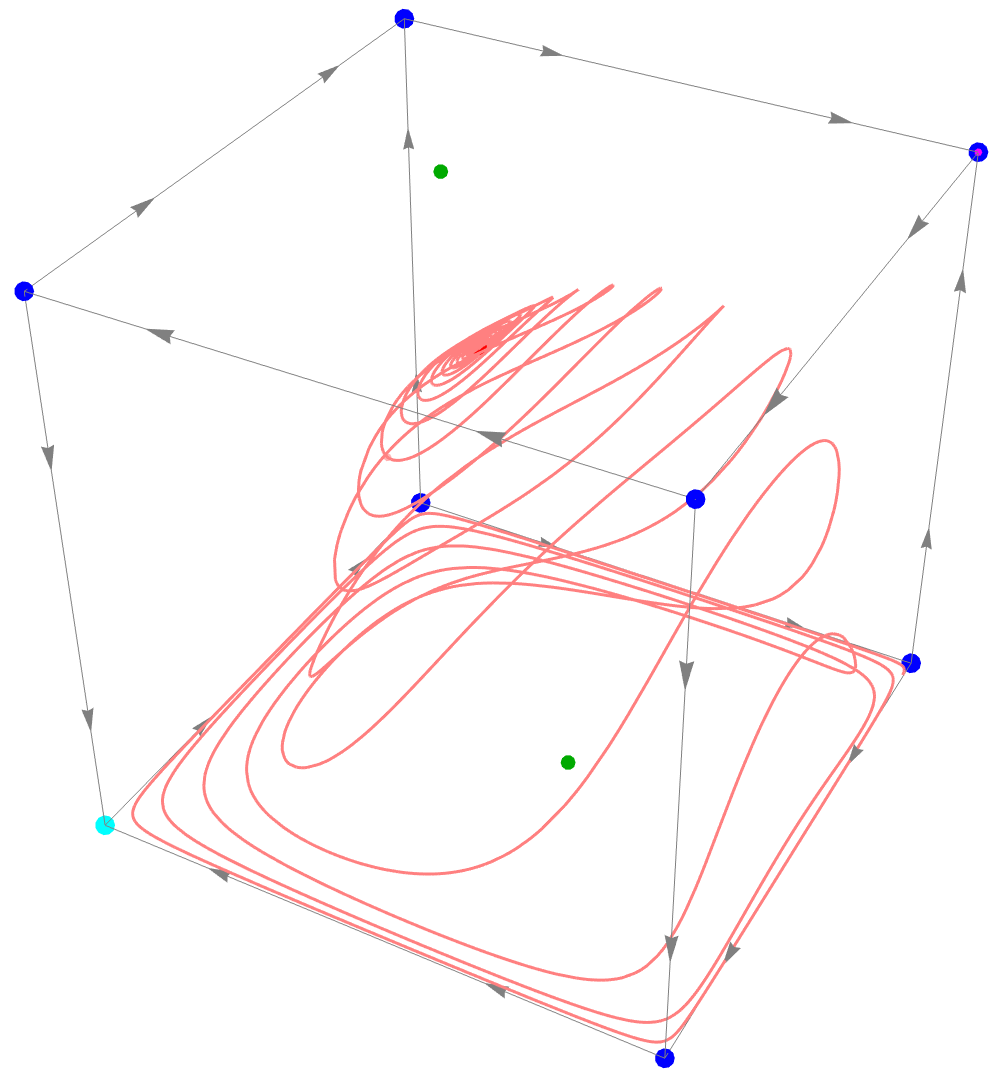}
\\[2mm] \midrule \\[-2mm]

$\boldsymbol{\mu}=96$ &
\includegraphics[width=5cm]{./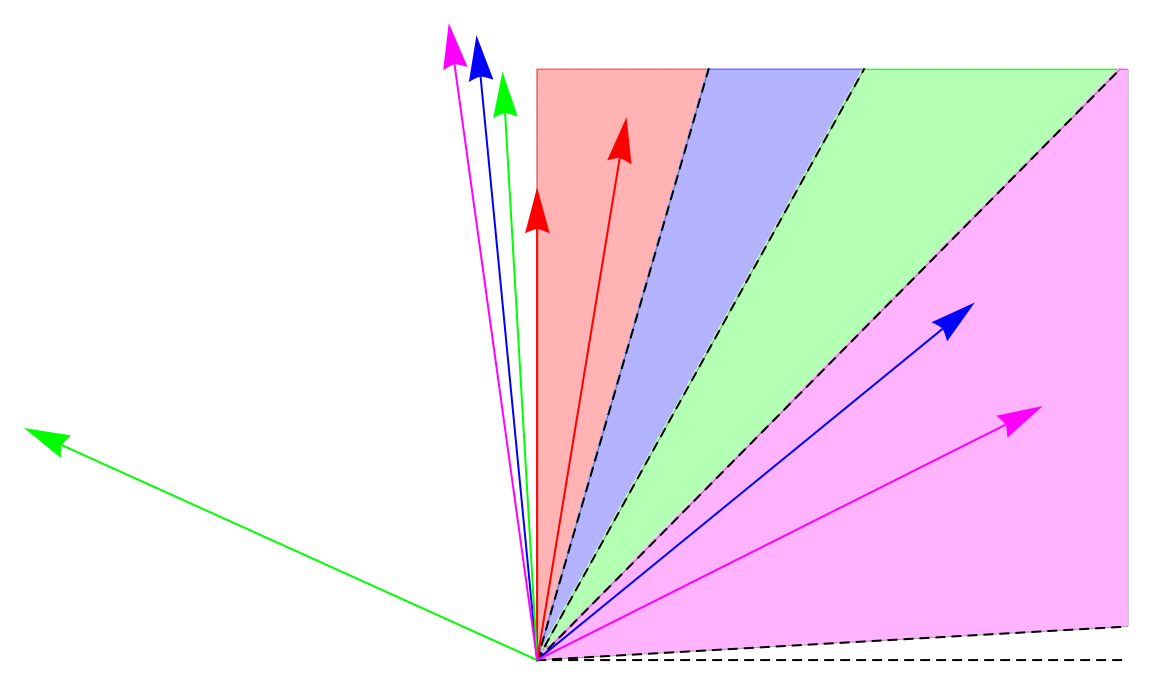} &
\includegraphics[width=3.5cm]{./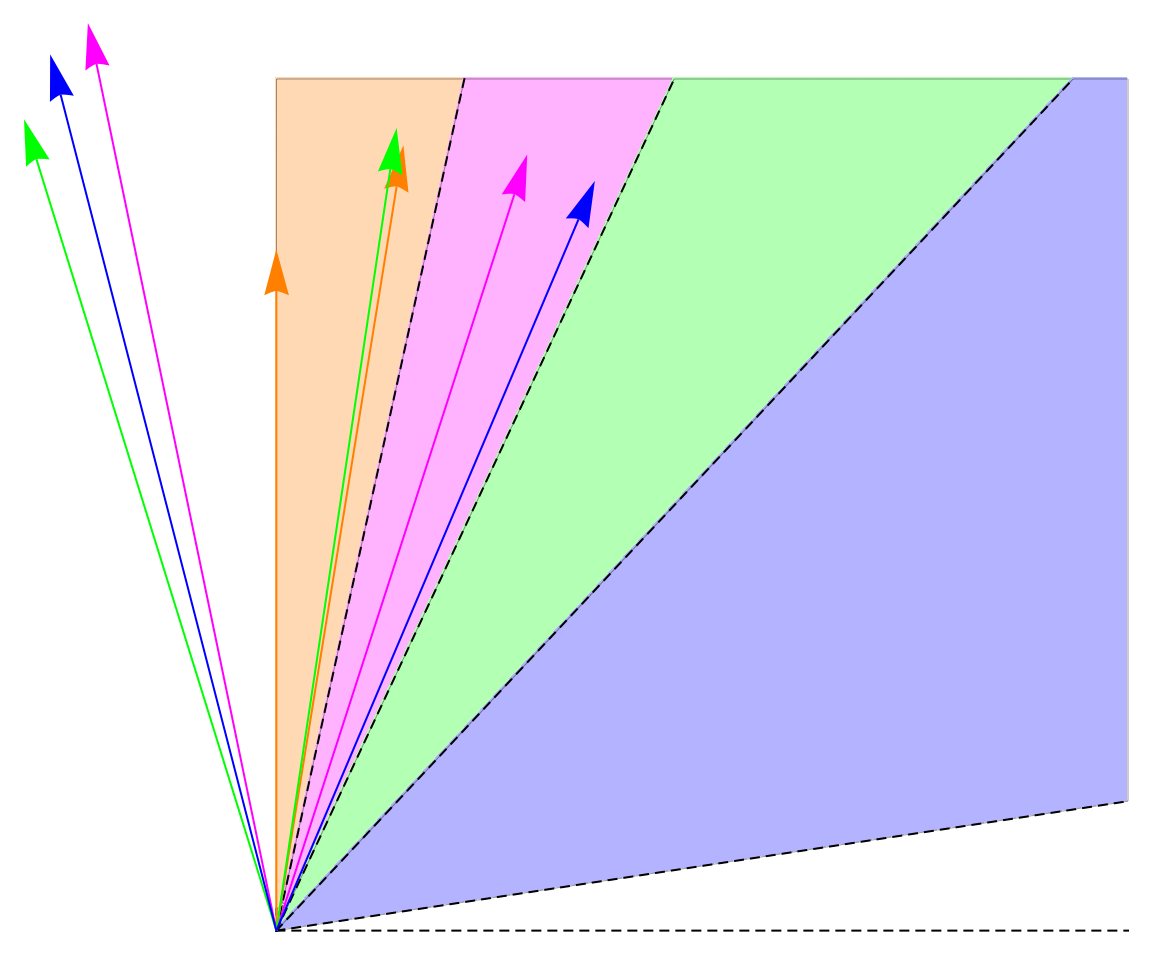} &
\includegraphics[width=3.2cm]{./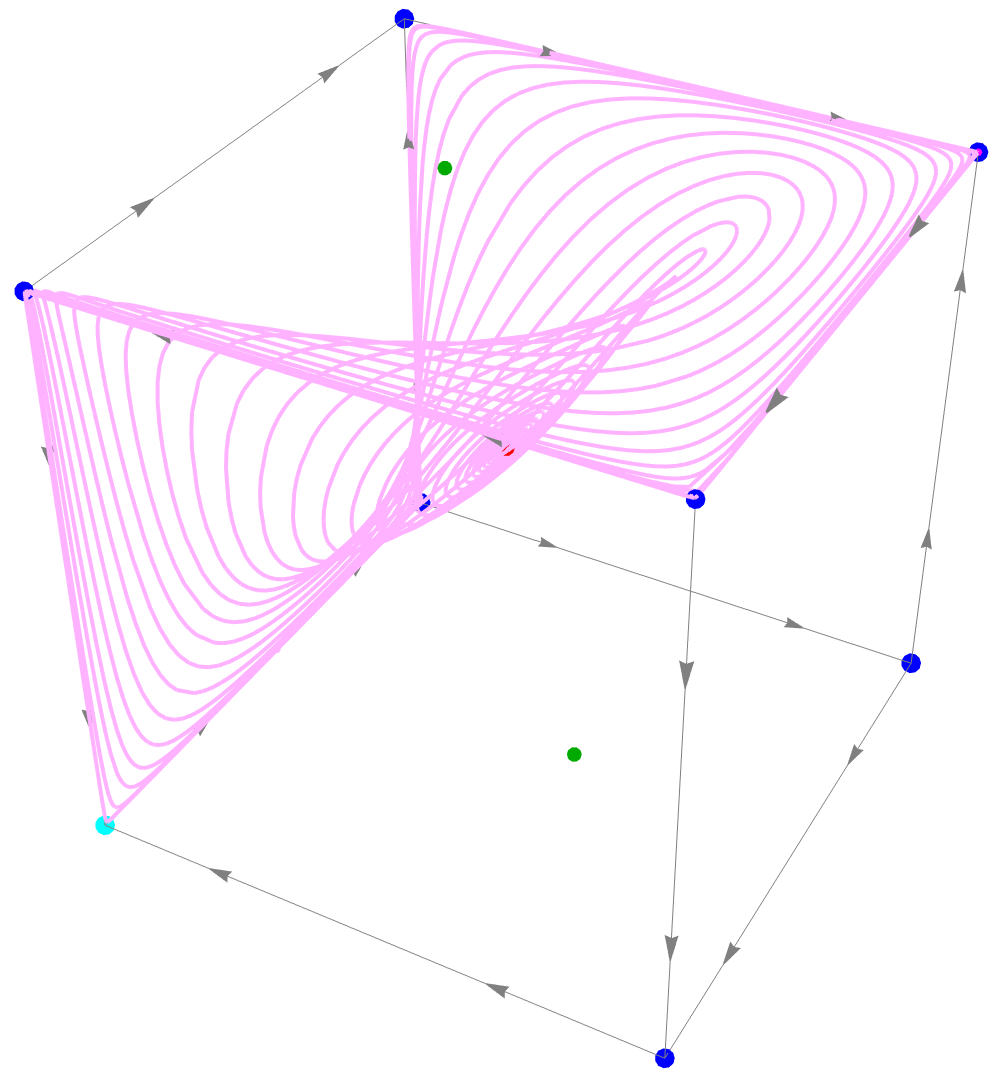}
\\[2mm] \midrule \\[-2mm]

$\boldsymbol{\mu}=99$ &
\includegraphics[width=3.5cm]{./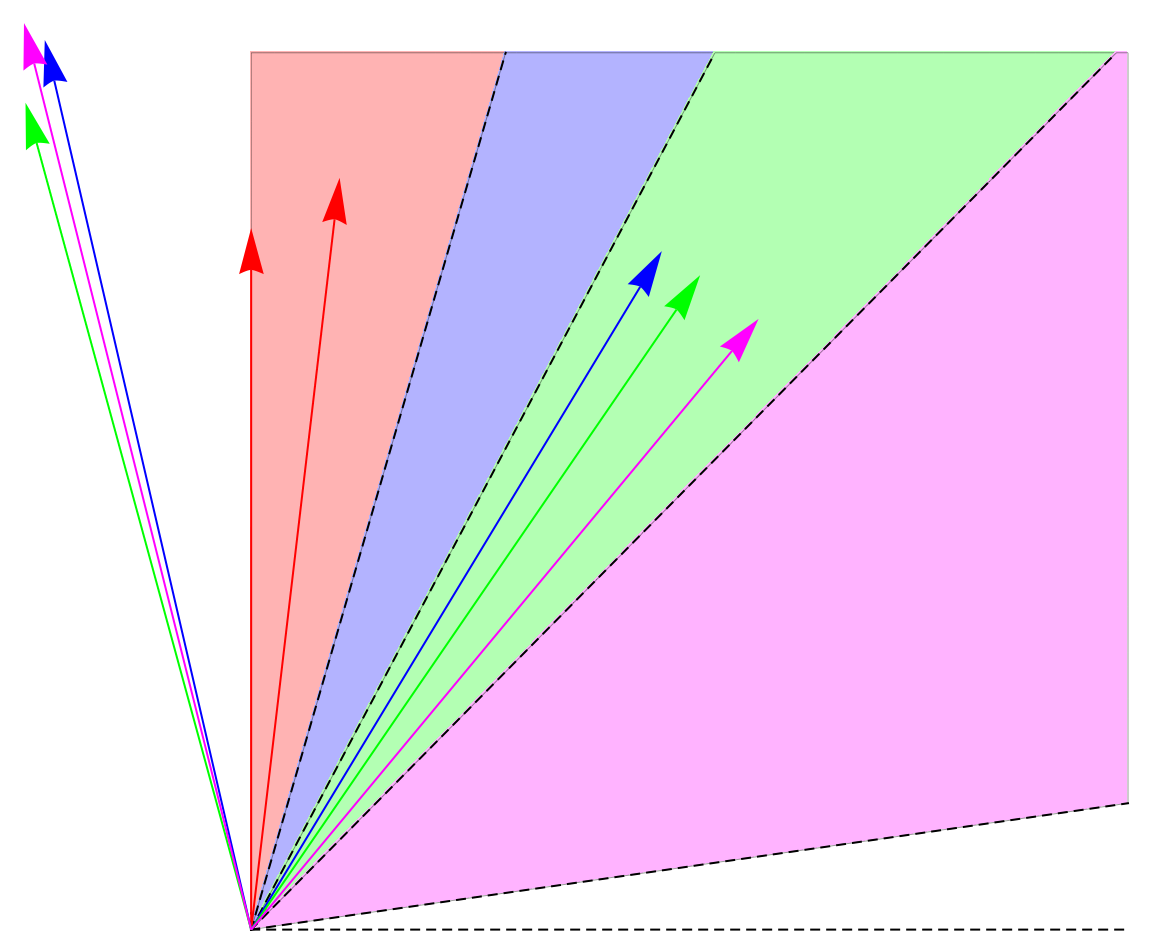} &
\includegraphics[width=3.1cm]{./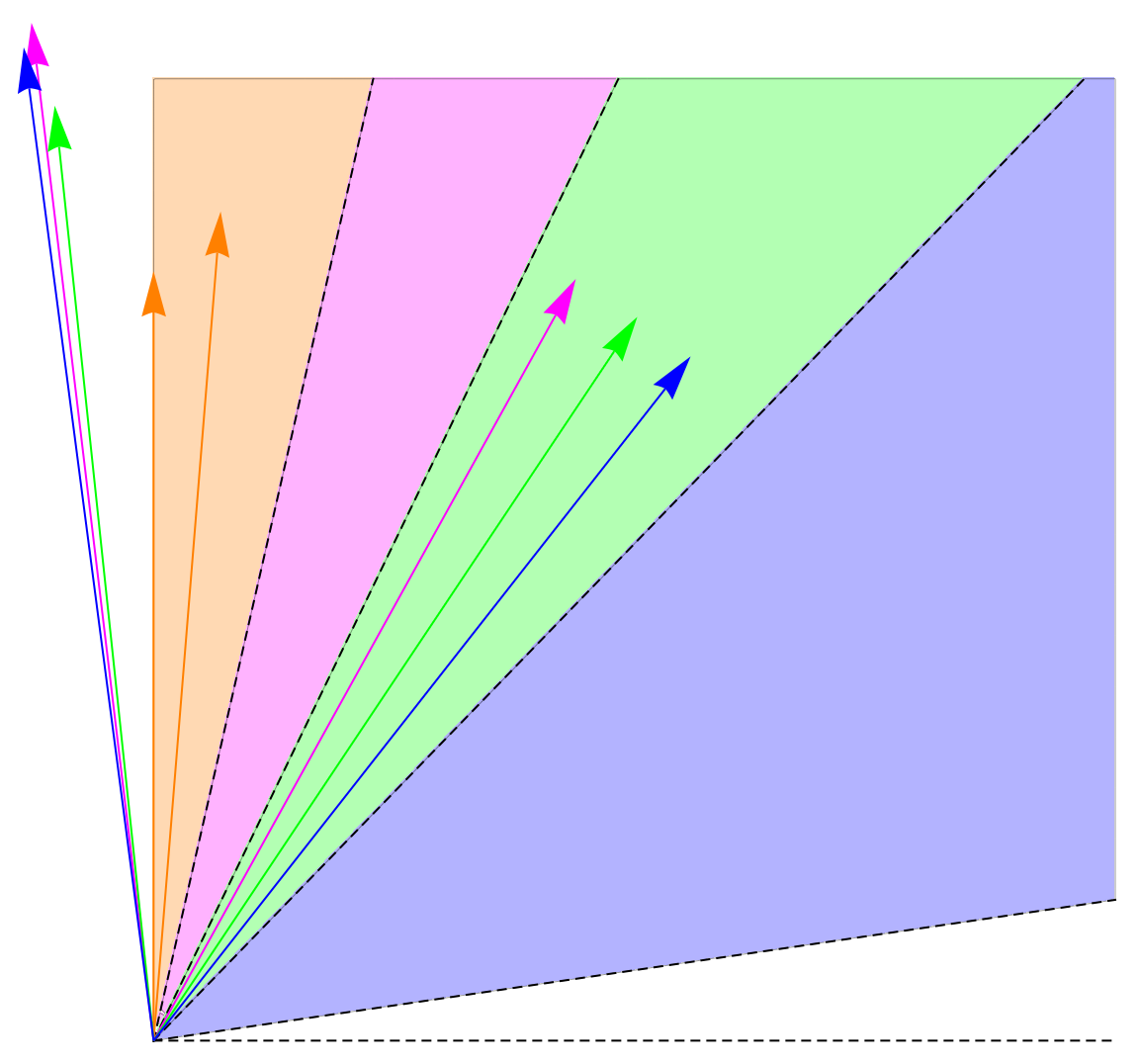} &
\includegraphics[width=3.2cm]{./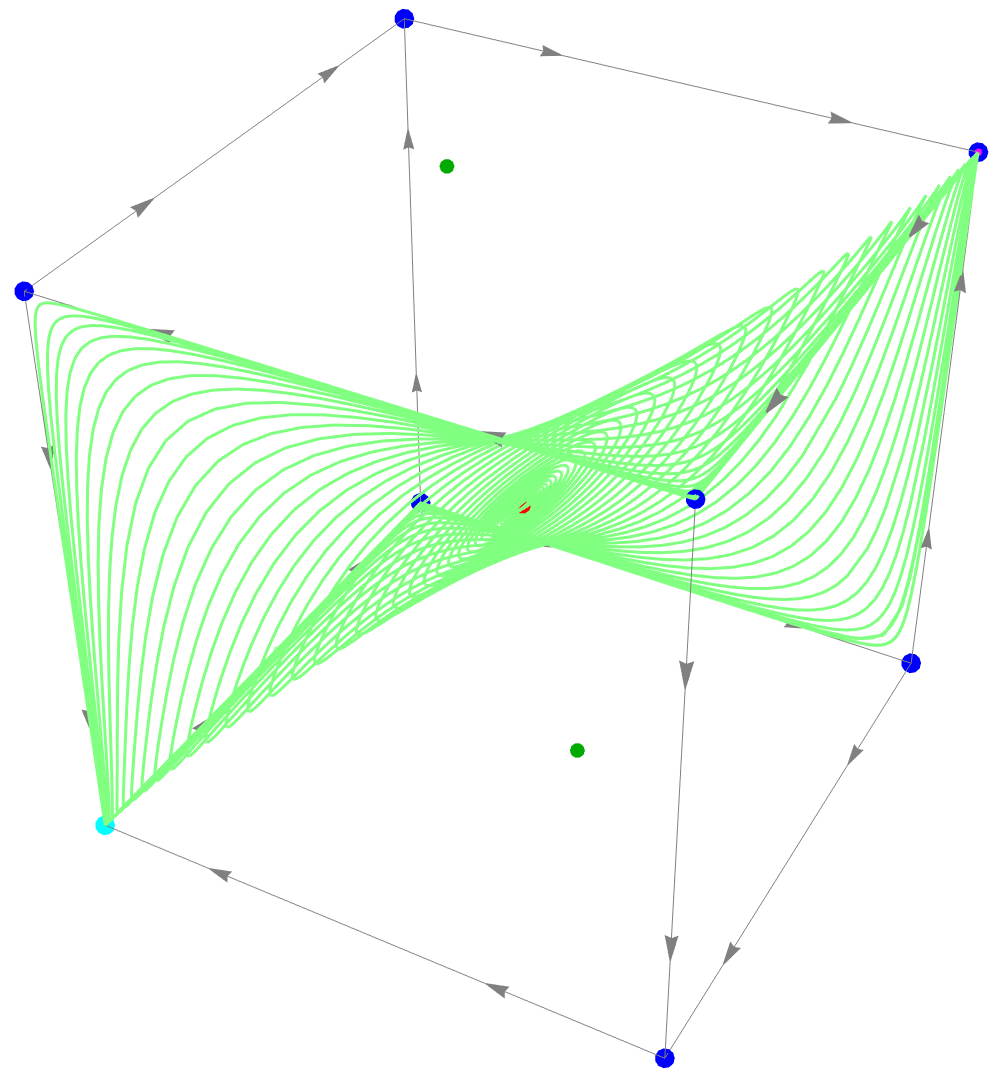}
\\[2mm] \midrule \\[-2mm]

$\boldsymbol{\mu}=101$ &
\includegraphics[width=3.5cm]{./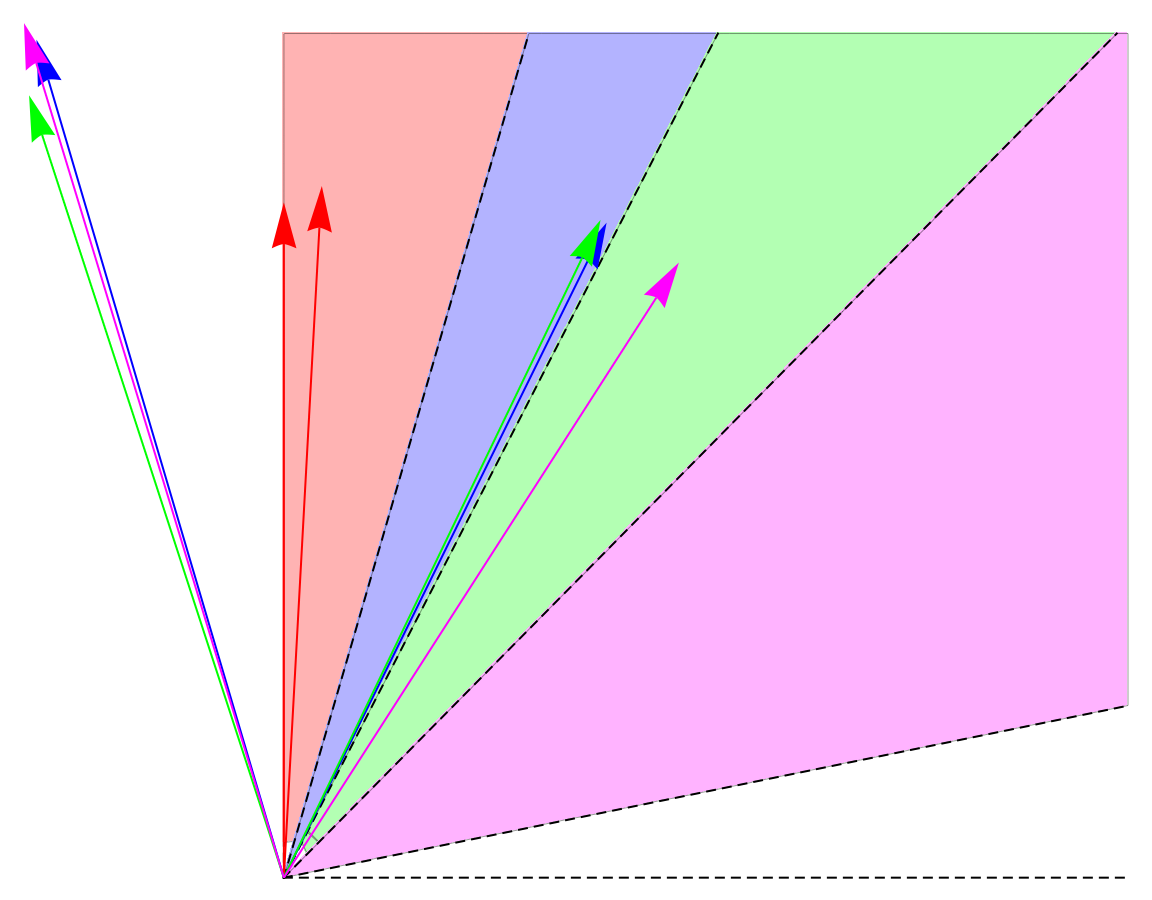} &
\includegraphics[width=3.cm]{./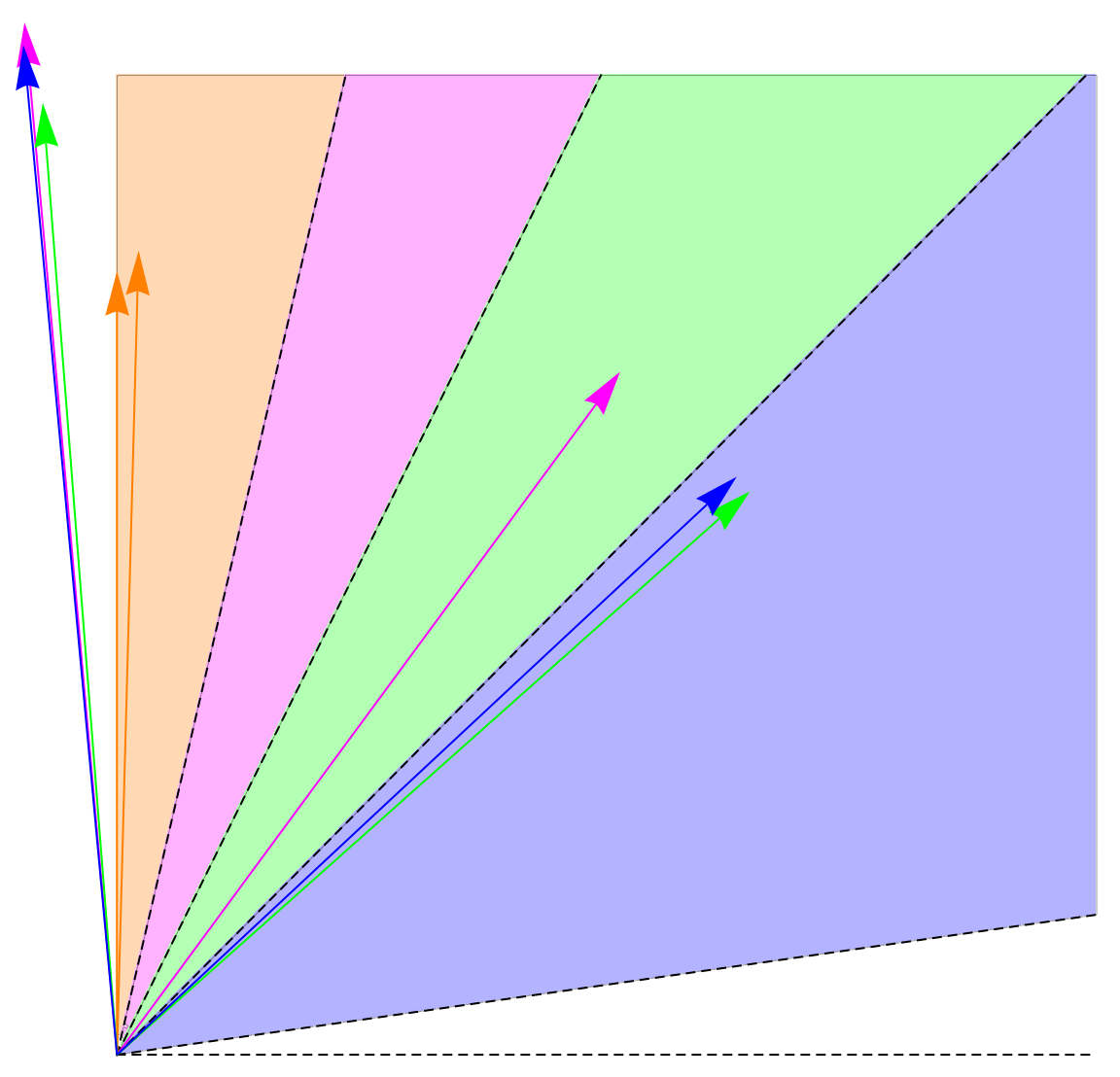} &
\includegraphics[width=3.2cm]{./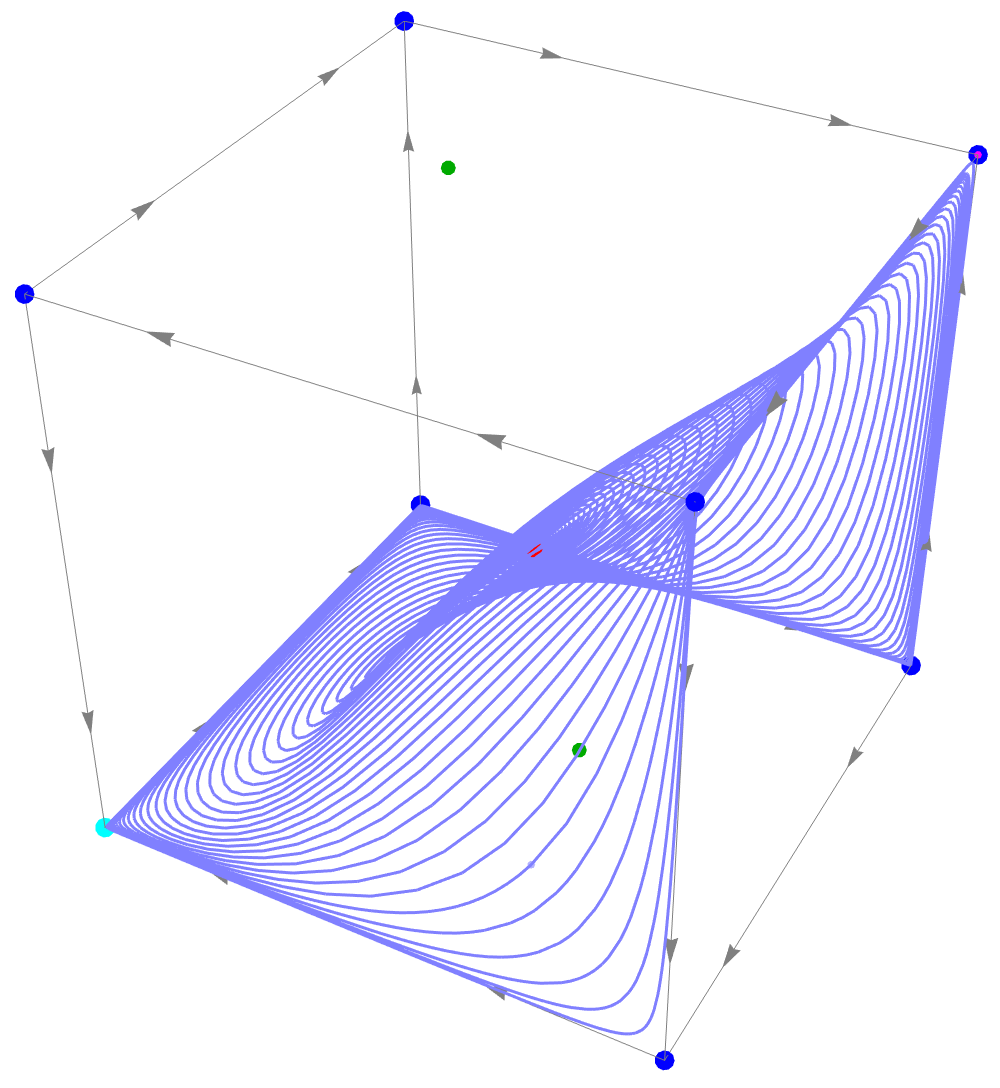}
\\[2mm]
\bottomrule

\end{tabular}
\end{center}
\bigskip
\caption{\small The eigenvectors of $M_{\mathcal{H}}$ and the corresponding sectors $\Pi_{\mathcal{H}}$ in $\Pi_{\gamma_5}$ and $\Pi_{\gamma_8}$ (where for each cycle $\mathcal{H}$, the color of the eigenvectors of $M_{\mathcal{H}}$ is the same of the corresponding sector $\Pi_{\mathcal{H}}$),
and the plot of an orbit of system \eqref{eq:poly_rep_5} whith initial condition in the interior of the phase space near $\mathcal{M}_{\boldsymbol{\mu}}$, for different values of $\boldsymbol{\mu}\in \mathcal{I}_1$.}
\label{tbl:vectors_and_orbits_mu}
\end{table}

\begin{cor}
\label{Lema19}
For $\boldsymbol{\mu} \in \mathcal{I}_2 \cup  \mathcal{I}_3$, the following assertions hold:\\
\begin{enumerate}
	\item the set $\overline{\mathcal{M}_{\boldsymbol{\mu}}}$ accumulates on $\mathcal{H}_5$;\\
	\item the set $\overline{\mathcal{M}_{\boldsymbol{\mu}}}$ divides $[0,1]^3$ in two connected components, each one containing either $B_1$ or $B_2$;\\
	\item for $ z\in \inter \left( [0,1]^3 \right)\setminus \mathcal{M}_{\boldsymbol{\mu}}$, $\omega(z)$ is either $\{B_1\}$ or $\{B_2\}$, according to the connected component  where $z$ lies.
\end{enumerate}
\end{cor}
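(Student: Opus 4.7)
The strategy is to transfer the periodic-orbit information of the projective map $\varphi_{\boldsymbol{\mu}}$ to the phase space, in parallel with the proof of Corollary~\ref{prop:projective_cycles}. By Proposition~\ref{prop:projective_paths}(e), the only periodic orbit of $\varphi_{\boldsymbol{\mu}}$ for $\boldsymbol{\mu}\in\mathcal{I}_2\cup\mathcal{I}_3$ is a repelling period-two point alternating between $\inter(J_3)$ and $\inter(J_5)$. Reading the itineraries off Table~\ref{branch_table}, this periodic point corresponds to the concatenation $\xi_3\oplus\xi_5$, which coincides with the cycle $\mathcal{H}_5$. Because the point is repelling, Proposition~\ref{maximum_ratio_prop} places the associated eigenvector of $M_{\mathcal{H}_5}$ inside $\Pi_{\mathcal{H}_5}$ but along the \emph{smaller} positive eigendirection, so Remark~\ref{invariant manifold} lifts this invariant line to a two-dimensional compact invariant manifold in $[0,1]^3$ accumulating on $\mathcal{H}_5$. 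By Fact~\ref{fact:no_more_invariants}, this manifold is $\mathcal{M}_{\boldsymbol{\mu}}$, proving (1).

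For (2), I treat $\overline{\mathcal{M}_{\boldsymbol{\mu}}}$ as a compact connected topological $2$-surface in $[0,1]^3$ whose topological frontier is the Jordan curve $\mathcal{H}_5\subset\partial[0,1]^3$, using the characterisations $\mathcal{M}_{\boldsymbol{\mu}}=\{\mathcal{O}_{\boldsymbol{\mu}}\}\cup W^s(\mathcal{C}_{\boldsymbol{\mu}})$ on $\mathcal{I}_2$ and $\mathcal{M}_{\boldsymbol{\mu}}=W^s(\mathcal{O}_{\boldsymbol{\mu}})$ on $\mathcal{I}_3$ recorded after Fact~\ref{fact: 1D heteroclinic connection}. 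A Jordan--Brouwer type separation in $[0,1]^3\subset\RR^3$ then splits $[0,1]^3\setminus\overline{\mathcal{M}_{\boldsymbol{\mu}}}$ into two open connected components. To identify the components with $B_1$ and $B_2$, I observe that $\mathcal{H}_5$ meets $\sigma_5$ only along the edge $[v_8\to v_6]$ and meets $\sigma_6$ only along the sub-path $v_5\to v_1\to v_3\to v_7$, while $B_1\in\inter(\sigma_5)$ and $B_2\in\inter(\sigma_6)$ (Lemma~\ref{lem:B_1 and B_2 stability}) are disjoint from $\overline{\mathcal{M}_{\boldsymbol{\mu}}}$; the one-dimensional connections $[\mathcal{O}_{\boldsymbol{\mu}}\to B_j]$ of Fact~\ref{fact: 1D heteroclinic connection} leave $\mathcal{M}_{\boldsymbol{\mu}}$ transversely at $\mathcal{O}_{\boldsymbol{\mu}}$ along the unique eigendirection transverse to $\mathcal{M}_{\boldsymbol{\mu}}$, placing $B_1$ and $B_2$ on opposite sides of $\overline{\mathcal{M}_{\boldsymbol{\mu}}}$.

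For (3), fix $z\in\inter([0,1]^3)\setminus\mathcal{M}_{\boldsymbol{\mu}}$. Compactness of $[0,1]^3$ gives $\omega(z)\neq\emptyset$. If $\omega(z)\subset\inter([0,1]^3)$, Fact~\ref{fact:no_more_invariants} forces $\omega(z)\subset\mathcal{M}_{\boldsymbol{\mu}}$; but the identification of $\mathcal{M}_{\boldsymbol{\mu}}$ as a stable manifold then implies that $z$ itself belongs to this stable manifold, contradicting our choice of $z$. Hence $\omega(z)$ meets $\partial[0,1]^3$, and Lemmas~\ref{lem:B_1 and B_2 stability} and~\ref{lem: dynamics on the faces} restrict the candidate boundary $\omega$-limits to $\{B_1\}$, $\{B_2\}$, or pieces of $\mathcal{H}$. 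Proposition~\ref{prop:projective_paths}(e), read via Proposition~\ref{maximum_ratio_prop}, shows that for $\boldsymbol{\mu}\in\mathcal{I}_2\cup\mathcal{I}_3$ no cycle of $\mathcal{H}$ is Lyapunov stable, since no eigenline of any $M_{\mathcal{H}_i}$ is simultaneously inside its sector and dominant; this rules out $\omega(z)\cap\mathcal{H}\neq\emptyset$. The forward invariance of each component of $[0,1]^3\setminus\overline{\mathcal{M}_{\boldsymbol{\mu}}}$ (since $\mathcal{M}_{\boldsymbol{\mu}}$ is flow-invariant and its closure separates the cube by (2)) finally pins $\omega(z)$ to be the unique sink lying in the component containing $z$.

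The main obstacle is the rigorous topological separation in item (2): one must verify that $\overline{\mathcal{M}_{\boldsymbol{\mu}}}$ is a genuine embedded topological $2$-surface, rather than a more pathological compact $2$-set, and that $B_1$ and $B_2$ indeed sit on distinct sides of it. Once this is in place, item (1) reduces to the projective-map dictionary of Section~\ref{sec:asym_dyn} plus Fact~\ref{fact:no_more_invariants}, and item (3) to the instability of every cycle of $\mathcal{H}$ combined with the boundary dynamics of Lemma~\ref{lem: dynamics on the faces}.
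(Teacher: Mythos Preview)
Your proposal is correct and follows essentially the same route as the paper: for item~(1) you invoke Proposition~\ref{prop:projective_paths}(e), read the itinerary $\xi_3\oplus\xi_5=\mathcal{H}_5$, and appeal to Remark~\ref{invariant manifold} together with Fact~\ref{fact:no_more_invariants}, exactly as the paper does by deferring to ``the same arguments of Corollary~\ref{prop:projective_cycles}''; for item~(3) your elimination of interior $\omega$-limits via Fact~\ref{fact:no_more_invariants} and of $\mathcal{H}$ via the repelling character of the unique projective periodic orbit matches the paper's one-line justification that ``$\mathcal{M}_{\boldsymbol{\mu}}\cup\mathcal{H}$ is repelling and there are no more compact invariant sets candidates for $\omega$-limit sets''.

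Where you go beyond the paper is item~(2): the paper gives no separate argument for the topological separation (neither here nor in the proof of Corollary~\ref{prop:projective_cycles}), whereas you sketch a Jordan--Brouwer argument and use the transverse connections $[\mathcal{O}_{\boldsymbol{\mu}}\to B_j]$ to place $B_1$ and $B_2$ on opposite sides. Your concluding self-assessment is accurate: verifying that $\overline{\mathcal{M}_{\boldsymbol{\mu}}}$ is a genuine embedded $2$-disk with boundary $\mathcal{H}_5$ is the one step neither you nor the paper makes fully rigorous. One minor sharpening for item~(3): ``no cycle is Lyapunov stable'' does not by itself exclude $\omega(z)\subset\mathcal{H}$; the cleaner phrasing, implicit in the paper and in the projective-map theory, is that the unique interior periodic orbit of $\varphi_{\boldsymbol{\mu}}$ is repelling, so orbits of $\varphi_{\boldsymbol{\mu}}$ off that orbit reach $\partial\Delta_\mathcal{S}$, which in the phase space corresponds to trajectories leaving any tubular neighbourhood of $\mathcal{H}$.
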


The proof of Corollary \ref{Lema19} runs along the same arguments of Corollary \ref{prop:projective_cycles}. We have $\mathcal{L} (\inter \left( [0,1]^3 \right)\setminus \mathcal{M}_{\boldsymbol{\mu}})=\{B_1, B_2\}$ because   $\mathcal{M}_{\boldsymbol{\mu}} \cup \mathcal{H}$ is repelling and there are no more compact invariant sets candidates for $\omega$-limit sets.

\begin{table}[htb]
\tiny
\begin{center}
\begin{tabular}{|c|c|c|c|}
\toprule
{Parameter Interval}  &  $\mathcal{L}(\mathcal{B}(\mathcal{H}))$   & \quad\quad  Basin of attraction of $\mathcal{L}(\mathcal{B}(\mathcal{H}))$ \qquad\qquad   &   $\overline{\mathcal{M}_{\boldsymbol{\mu}}}$ ``glues'' at    \\[1mm]
\toprule \\[-2mm]

$[850/11, \mu_3[ $ &  $\mathcal{H}_6$ & $]0,1[^3\backslash \mathcal{M}_{\boldsymbol{\mu}}$  & $\mathcal{H}_6$   \\[3mm]
\midrule \\[-1mm]

$[\mu_3, \mu_4[ $ &  $\mathcal{H}_1\cup \mathcal{H}_6$ & Each CC of  $]0,1[^3\backslash \mathcal{M}_{\boldsymbol{\mu}}$ accumulates either on  $\mathcal{H}_1$ or $\mathcal{H}_6$ & $\mathcal{H}_2$   \\[3mm]
\midrule \\[-1mm]

$[\mu_4, \mu_5[ $ &  $\mathcal{H}_1\cup \mathcal{H}_6$ &  Each CC of  $]0,1[^3\backslash \mathcal{M}_{\boldsymbol{\mu}}$ accumulates either on  $\mathcal{H}_1$ or $\mathcal{H}_6$  & $\mathcal{H}_4$   \\[3mm]
\midrule \\[-1mm]

$[\mu_5, 102[ $ &  $\mathcal{H}_1\cup \mathcal{H}_6$ &  Each CC of  $]0,1[^3\backslash \mathcal{M}_{\boldsymbol{\mu}}$ accumulates either on  $\mathcal{H}_1$ or $\mathcal{H}_6$  & $\mathcal{H}_5$   \\[3mm]
\midrule \\[-1mm]

$\mathcal{I}_2 \cup \mathcal{I}_3 $ &  $\{B_1, B_2\}$ &Each CC of  $]0,1[^3\backslash \mathcal{M}_{\boldsymbol{\mu}}$ accumulates either on  $B_1$ or $B_2$  & $\mathcal{H}_5$   \\[3mm]

\bottomrule

\end{tabular}
\end{center}
\bigskip
\caption{\small Summary of concluding results for system \eqref{eq:poly_rep_5}, where $\boldsymbol{\mu}_3=94$, $\boldsymbol{\mu}_4=\frac{85251}{869}$ and $\boldsymbol{\mu}_5=\frac{85234}{849}$. CC: connected component.}\label{tbl:notationA}
\end{table}

\section{Implemented software code}\label{sec:software_code}
We provide in \url{https://www.iseg.ulisboa.pt/aquila/homepage/telmop/investigacao/flows-on-polytopes---mathematica-code}  the Mathematica code we developed to explore the dynamics of polymatrix replicators for low dimensional polytopes (First author's personal webpage).   

\section{Discussion}\label{sec:discussion}
 In the present article, by using the theory introduced in \cite{alishah2019asymptotic}, we develop a method to study the asymptotic dynamics near an attracting heteroclinic network $ \mathcal{H}$ formed by six one-dimensional cycles involving hyperbolic equilibria (lying on the boundary of a cube). We have described a general way to compute the likely limit set associated to the basin of attraction of the network. 
 
Our study  contributes to a deeper understanding of the results obtained in  \cite{alishah2015hamiltonian,alishah2015conservative, afraimovich2016two}, where numerical simulations evidenced the visibility of two cycles. 
In our model (defined by systems \eqref{eq:poly_rep_4} or \eqref{eq:poly_rep_5}), the parameter $\boldsymbol{\mu}$ represents the average payoffs in the context of EGT.
 We concluded that whenever the parameter $\boldsymbol{\mu}$ lies on $\mathcal{I}=\left[\frac{850}{11},\frac{544}{5} \right]$, the associated dynamics is non-chaotic and a given set of strategies  dominates. Our results extend to models other than the Lotka-Volterra that preserve the invariance of coordinate lines and hyperplanes.

Our method has simililarities with the \emph{transitions matrices technique} used by Krupa and Melbourne \cite{krupa1995asymptotic} and Castro and Garrido-da-Silva \cite{garrido2019stability}. The main advantage of our method is twofold. First, the dynamics in a given cross section may be seen as a piecewise linear map  where the classical Perron-Frobenius theory of linear operators may be easily used. The analysis is computationally much more amenable than the classical method. 

Secondly, the reduction to a one-dimensional projective map  allows us to construct a bridge between its  periodic points and the existence of heteroclinic cycles (in  the flow), as well as their stability.   In contrast to the findings of \cite{podvigina2020asymptotic}, we do not need the assumption that the network is \emph{clean}. 

 Our class of examples is related to the dynamical systems represented by ODEs that support the dynamics of the Rock-Scissors-Paper-Lizard-Spock game \cite{postlethwaite2021stability} and Lotka-Volterra systems constructed using the methods of \cite{field2020lectures, ashwin2013designing}. See also \cite{aguiar2011there, podvigina2020asymptotic}. Although there are similarities between system \eqref{eq:poly_rep_5} and Equation (6)  of \cite{peixe2021persistent}, the associated dynamics are very different. While, in the latter case, the existence of  chaos is a persistent phenomenon, in the first case, the dynamics   exhibits zero topological entropy.

\subsection*{Classical method: an overview}
The classical method to analyse the stability of cycles and networks  is based in the following procedure:  assuming a non-resonance condition on the spectrum of the linearization of the vector field at the equilibria, we approximate the behaviour of nearby trajectories by composing local and global maps.  For compact networks, global maps are linear whose coefficients  are bounded from above. The   estimates for local maps near the saddles involve exponents of eigenvalues ratios.

A network is \emph{stable} if certain products of the exponents appearing in the expression of the first return map to a cross section are larger than one.  
According to their role on the network,  the eigenvalues  can be classified as \emph{radial, contracting, expanding and transverse} (see \cite{podvigina2015simple, podvigina2017asymptotic}) .  The estimates for local maps depend on the local structure of  the network near the equilibria.  
In the presence of symmetry (or other constraints), the application of the method is slightly different since the fixed-point subspaces may be seen as borders that cannot be crossed.

\subsection*{Our technique: a summary}
Looking to a heteroclinic network $\mathcal{H}$  (formed by one-dimen\-sional connections) on a manifold with boundary, we consider the set $\mathcal{S}$, called \emph{structural set}, consisting of heteroclinic connections such that every cycle of the  network contains at least one connection in $\mathcal{S}$. Given a structural set $\mathcal{S}$, we denote by $\Sigma$   the union of cross sections to $\mathcal{S}$, one at each heteroclinic connection in $\mathcal{S}$. The flow   induces a Poincar\'e return map, say  $P_\mathcal{S}$, to $\Sigma$, designated as the $\mathcal{S}$-Poincar\'e map   associated to each possible itinerary that starts and ends at $\mathcal{S}$. This map captures well the global dynamics near $\mathcal{H}$.
 

Using the  quasi-change of coordinates of Section \ref{subsec:dual_cone}, at the level of the dual cone,  we obtain a return map $\pi_\mathcal{S}$  well defined on the union  of the corresponding  sections (up to a set with zero Lebesgue zero),  denoted by $\Pi_\mathcal{S}$. 

After making explicit the piecewise linear skeleton map $\pi_\mathcal{S}: D_\mathcal{S}^{\ast}  \rightarrow \Pi_\mathcal{S}$ of Proposition \ref{prop15}, we use an algorithm to compute the associated matrix $M_\xi$ for each $\xi \in B_\mathcal{S}$, as well the inequalities defining the domain of each sector $\Pi_\xi$.   Using the asymptotic of linear maps, all solutions approach the eigendirection associated to the greatest eigenvalue, as a consequence of the Perron-Frobenius Theory.  

The   map $\pi_\mathcal{S}$ carries the asymptotic behaviour of  {$P_\mathcal{S}$} along the different paths in the sense that after a rescaling change of coordinates $\Psi_\varepsilon$,  $\pi_\mathcal{S}$ is the   limit of $\Psi_\varepsilon \circ {P_\mathcal{S}} \circ (\Psi_\varepsilon)^{-1}$ as $\varepsilon$ tends to $0^+$ (in the $C^1$--topology).  

Because the   map $\pi_\mathcal{S}$ is easily computable, we can run an algorithm to find the $\pi_\mathcal{S}$-invariant linear algebra structures, provided their eigenvalues are two different positive real numbers. If these structures are invariant under small non-linear perturbations, they will persist as invariant geometric structures for  {$P_\mathcal{S}$}, and hence for the flow. Under the assumption that there are no compact invariant sets in the interior of the cube, we also make use of this stability principle to prove the existence of normally hyperbolic  manifolds for heteroclinic cycles satisfying some appropriate conditions.

The intersection of each iterate of $\pi_\mathcal{S}$ with the line $\overline{u}=1$ generates the \emph{projective map} $\hat\pi_\mathcal{S}$. The saddle-value given by the ratio between the eigenvalues of $D \pi_\mathcal{S}$ at the corresponding fixed point determine its stability (that is associated to a given cycle).

The connection between the stability of periodic points for the projective map and the stability of
the original heteroclinic cycles is summarized in Table \ref{notationC}.

 Results of Subsection \ref{ss:skeleton} provides the most important breakthrough in the study of  stability for networks on Lotka-Volterra systems because the local and global maps are  stated according to the architecture of the network. They depend on the coordinates of the system allowing a systematic study of all subcycles of $\mathcal{H}$.
 This technique  may be generalized for other vector fields defined on a manifold isomorphic  to $[0,1]^n$, $n\in \NN$, containing a heteroclinic network on the boundary.

\subsection*{Future work}
The natural continuation work of this article is the application of our method in higher dimensions. The most intriguing question is to know how switching properties of the network may be realized in ``switching properties'' of the projective map. Another question is the relation between the $\boldsymbol{\mu}_3, \boldsymbol{\mu}_4, \boldsymbol{\mu}_5$ value with some linear combination of the eigenvalues of $Df_{\boldsymbol{\mu}}$  at the equilibria. These questions are deferred for future work.

\section*{Acknowledgements}

The authors are grateful to Pedro Duarte for the suggestion of the projective map defined in Section \ref{sec:projective_map} during the first author's PhD period.

The first author was supported by
the Project CEMAPRE/REM -- UIDB /05069/2020 financed by FCT/MCTES through national funds.
The second author was partially supported by CMUP (UID/MAT/00144/2019), which is funded by FCT with national (MCTES) and European structural funds through the programs FEDER, under the partnership agreement PT2020. He also acknowledges financial support from Program INVESTIGADOR FCT (IF/ 0107/ 2015).


\bibliographystyle{unsrt}
\bibliography{Bibfile}
\newpage
\appendix

\section{Tables}
\label{Appendix}
\begin{table}[h]

\makebox[1 \textwidth][c]{       
\resizebox{1.3 \textwidth}{!}{

\begin{tabular}{|c|c|c|c|c|}

\toprule
$\quad\xi\quad$ & Defining equations of $\Pi_{\xi}$  &  $M_{\xi}$, the matrix of $\pi_\xi$ &  Eigenvalues of $M_{\xi}$ &  Eigenvectors of $M_{\xi}$ \vspace{.1cm} \\
\toprule \\[-3mm]

$\xi_1$ & $255 u_2-74 u_6 < 0$  &    
$\left(
\begin{array}{cc}
\frac{42 (\boldsymbol{\mu} -176)}{37 (18-\boldsymbol{\mu} )} & 0 \\[2mm]
\frac{4029 (\boldsymbol{\mu} -106)}{518 (18-\boldsymbol{\mu} )} & 1 \\
\end{array}
\right)\,$
 &  $\left\{\frac{42 (176-\boldsymbol{\mu})}{37 (\boldsymbol{\mu} -18)},1\right\}$
 &  $\left\{ \left\{\frac{14 (\boldsymbol{\mu} -102)}{51 (\boldsymbol{\mu} -106)},1\right\} , \{0,1\} \right\}$
\\ \\[-3mm]
\midrule \\[-3mm]

$\xi_2$ & $75 u_2-74 u_6 > 0$  &    
$\left(
\begin{array}{cc}
\frac{11 (167-\boldsymbol{\mu} )}{1495} & \frac{162 (167-\boldsymbol{\mu} )}{37375} \\[2mm]
 \frac{3723}{1495} & -\frac{29434}{37375} \\
\end{array}
\right)\,$
 &  $\left\{ \frac{a_2-b_2}{74750}, \frac{a_2+b_2}{74750} \right\}$
 &  $\left\{ \left\{ \frac{c_2-b_2}{186150}, 1 \right\} , \left\{ \frac{c_2+b_2}{186150}, 1 \right\} \right\}$
\\ \\[-3mm]
\midrule \\[-3mm]

$\xi_3$ & $\left\{
\begin{array}{l}
75 u_2-74 u_6 < 0 \\[2mm]
 255 u_2-74 u_6 > 0
\end{array}\right.$  &    
$\left(
\begin{array}{cc}
\frac{5 (6502-41 \boldsymbol{\mu} )}{23828} & \frac{190-\boldsymbol{\mu} }{322} \\[2mm]
 \frac{2040}{851} & -\frac{16}{23} \\
\end{array}
\right)\,$
 &  $\left\{ \frac{a_3-b_3}{47656}, \frac{a_3+b_3}{47656} \right\}$
 &  $\left\{ \left\{ \frac{c_3-b_3}{114240}, 1 \right\} , \left\{ \frac{c_3+b_3}{114240}, 1 \right\} \right\}$
\\ \\[-3mm]
\midrule \\[-3mm]

$\xi_4$ & $\left\{
\begin{array}{l}
93 u_1 - (\boldsymbol{\mu} -9) u_5 < 0 \\[2mm]
4335 u_1 - 11 (\boldsymbol{\mu} -9) u_5 > 0
\end{array}\right.$  &    
$\left(
\begin{array}{cc}
 \frac{531}{8 (\boldsymbol{\mu} -9)} & \frac{13}{40} \\[2mm]
 \frac{7803}{32 (\boldsymbol{\mu} -9)} & -\frac{99}{160} \\
\end{array}
\right)\,$
 &  $\left\{ \frac{3(a_4-\sqrt{3}b_4)}{320(\boldsymbol{\mu}-9)}, \frac{3(a_4+\sqrt{3}b_4)}{320(\boldsymbol{\mu}-9)} \right\}$
 &  $\left\{ \left\{ \frac{c_4-\sqrt{3}b_4}{26010}, 1 \right\} , \left\{ \frac{c_4+\sqrt{3}b_4}{26010}, 1 \right\} \right\}$
\\ \\[-3mm]
\midrule \\[-3mm]

$\xi_5$ & $93 u_1 - (\boldsymbol{\mu} -9) u_5 > 0$  &    
$\left(
\begin{array}{cc}
 \frac{294}{5 (\boldsymbol{\mu} -18)} & \frac{63 (\boldsymbol{\mu} -32)}{155 (\boldsymbol{\mu} -18)} \\[2mm]
 \frac{1122}{5 (\boldsymbol{\mu} -18)} & \frac{127 \boldsymbol{\mu} +4446}{5580-310 \boldsymbol{\mu} } \\
\end{array}
\right)\,$
 &  $\left\{ \frac{a_5-b_5}{620(\boldsymbol{\mu}-18)}, \frac{a_5+b_5}{620(\boldsymbol{\mu}-18)} \right\}$
 &  $\left\{ \left\{ \frac{c_5-b_5}{139128}, 1 \right\} , \left\{ \frac{c_5+b_5}{139128}, 1 \right\} \right\}$
\\ \\[-3mm]
\midrule \\[-3mm]

$\xi_6$ & $ 4335 u_1 - 11 (\boldsymbol{\mu} -9) u_5 < 0$  &
$\left(
\begin{array}{cc}
\frac{93 (167-\boldsymbol{\mu} )}{65 (\boldsymbol{\mu} -9)} & 0 \\[2mm]
\frac{64464}{715 (\boldsymbol{\mu} -9)} & 1 \\
\end{array}
\right)\,$
 &  $\left\{\frac{93 (167-\boldsymbol{\mu})}{65 (\boldsymbol{\mu} -9)},1\right\}$
 &  $\left\{ \left\{\frac{11 (102-\boldsymbol{\mu})}{408},1\right\} , \{0,1\} \right\}$ \\
\bottomrule
\end{tabular}

} 
} 

\vspace{.1cm}
\begin{align*}
a_2 & = 16491-275\boldsymbol{\mu}, &
b_2 & = \sqrt{75625\boldsymbol{\mu}^2-101760050\boldsymbol{\mu}+15751183081}, &
c_2 & = 75359-275\boldsymbol{\mu}, \\
a_3 & = 15934-205\boldsymbol{\mu}, &
b_3 & = \sqrt{42025\boldsymbol{\mu}^2-37032780\boldsymbol{\mu}+5621864196}, &
c_3 & = 49086-205\boldsymbol{\mu}, \\
a_4 & = 3837-33\boldsymbol{\mu}, &
b_4 & = \sqrt{363\boldsymbol{\mu}^2+371906\boldsymbol{\mu}+800643}, &
c_4 & = 3243+33\boldsymbol{\mu}, \\
a_5 & = 13782-127\boldsymbol{\mu}, &
b_5 & = \sqrt{16129\boldsymbol{\mu}^2+40819452\boldsymbol{\mu}-607817916}, &
c_5 & = 22674-127\boldsymbol{\mu}.
\end{align*}

\vspace{.1cm}
\caption{\footnotesize{Branches of $\skPoin{\chi}{\mathcal{S}}$: defining equations of $\Pi_{\xi}$, the matrix of $\pi_\xi$, and their eigenvalues and eigenvectors, for each $\xi\in\{ \xi_1, \xi_2, \dots, \xi_6\}$.}}
\label{branch:table_paths}
\end{table}

\newpage
\begin{landscape}
\begin{table}[h]

\makebox[1 \textwidth][c]{       
\resizebox{1.8 \textwidth}{!}{

\begin{tabular}{|c|c|c|c|c|}

\toprule
$\quad\mathcal{H}\quad$ & Defining equations of $\Pi_{\mathcal{H}}\subset\Pi_{\gamma_5}$  &  $M_{\mathcal{H}}$, the matrix of $\pi_\mathcal{H}$ &  Eigenvalues of $M_{\mathcal{H}}$ &  Eigenvectors of $M_{\mathcal{H}}$ \vspace{.1cm} \\
\toprule \\[-3mm]

$\mathcal{H}_2$ & $\left\{
\begin{array}{l}
75 u_2-74 u_6 > 0 \\[2mm]
1107975 (\boldsymbol{\mu}-94) u_2+2 (94624\boldsymbol{\mu}-28591281) u_6 < 0
\end{array}\right.$  &    
$\left(
\begin{array}{cc}
 \frac{3 (3199 \boldsymbol{\mu} +740274)}{29900 (\boldsymbol{\mu} -9)} & -\frac{79 (2572 \boldsymbol{\mu} -238203)}{373750 (\boldsymbol{\mu} -9)} \\[2mm]
 -\frac{398871 (\boldsymbol{\mu} -94)}{119600 (\boldsymbol{\mu} -9)} & -\frac{9 (94624 \boldsymbol{\mu} -28591281)}{1495000 (\boldsymbol{\mu} -9)} \\
\end{array}
\right)$
 & $\left\{ \frac{3(a_2-79 b_2)}{2990000(\boldsymbol{\mu}-9)}, \frac{3(a_2+79 b_2)}{2990000(\boldsymbol{\mu}-9)} \right\}$
 & $\left\{ \left\{ \frac{c_2-b_2}{42075(\boldsymbol{\mu}-94)}, 1 \right\} , \left\{ \frac{c_2+b_2}{42075(\boldsymbol{\mu}-94)}, 1 \right\} \right\}$
\\ \\[-3mm]
\midrule \\[-3mm]

$\mathcal{H}_4$ & $\left\{
\begin{array}{l}
75 u_2-74 u_6 < 0 \\[2mm]
15 (235834-5079\boldsymbol{\mu}) u_2+74 (15654+131\boldsymbol{\mu}) u_6 < 0
\end{array}\right.$  &    
$\left(
\begin{array}{cc}
 \frac{3 (13219 \boldsymbol{\mu} +5308734)}{190624 (\boldsymbol{\mu} -9)} & \frac{530658-5567 \boldsymbol{\mu} }{12880 (\boldsymbol{\mu} -9)} \\[2mm]
 -\frac{459 (5949 \boldsymbol{\mu} -574846)}{762496 (\boldsymbol{\mu} -9)} & -\frac{9 (1871 \boldsymbol{\mu} -801474)}{51520 (\boldsymbol{\mu} -9)} \\
\end{array}
\right)$
 & $\left\{ \frac{3(a_4-b_4)}{3812480(\boldsymbol{\mu}-9)}, \frac{3(a_4+b_4)}{3812480(\boldsymbol{\mu}-9)} \right\}$
 & $\left\{ \left\{ \frac{c_4-b_4}{765( 5949 \boldsymbol{\mu}-574846 )}, 1 \right\} , \left\{ \frac{c_4+b_4}{765( 5949 \boldsymbol{\mu}-574846 )}, 1 \right\} \right\}$
\\ \\[-3mm]
\midrule \\[-3mm]

$\mathcal{H}_5$ & $\left\{
\begin{array}{l}
255 u_2-74 u_6 > 0 \\[2mm]
15 (235834-5079\boldsymbol{\mu}) u_2+74 (15654+131\boldsymbol{\mu}) u_6 > 0
\end{array}\right.$  &    
$\left(
\begin{array}{cc}
 \frac{21 (1177 \boldsymbol{\mu} +123226)}{52762 (\boldsymbol{\mu} -18)} & -\frac{1659 (\boldsymbol{\mu} -94)}{3565 (\boldsymbol{\mu} -18)} \\[2mm]
 -\frac{4029 (267 \boldsymbol{\mu} -24914)}{369334 (\boldsymbol{\mu} -18)} & -\frac{19 (541 \boldsymbol{\mu} -187014)}{24955 (\boldsymbol{\mu} -18)} \\
\end{array}
\right)$
 & $\left\{ \frac{a_5-79b_5}{3693340(\boldsymbol{\mu}-18)}, \frac{a_5+79b_5}{3693340(\boldsymbol{\mu}-18)} \right\}$
 & $\left\{ \left\{ \frac{c_5+b_5}{510(267\boldsymbol{\mu}-24914)}, 1 \right\}, \left\{ \frac{c_5-b_5}{510(267\boldsymbol{\mu}-24914)}, 1 \right\} \right\}$
\\ \\[-3mm]
\midrule \\[-3mm]

$\mathcal{H}_6$ & $255 u_2-74 u_6 < 0$  &
$\left(
\begin{array}{cc}
 -\frac{42 (\boldsymbol{\mu} -176)}{37 (\boldsymbol{\mu} -18)} & 0 \\[2mm]
 -\frac{4029 (\boldsymbol{\mu} -106)}{518 (\boldsymbol{\mu} -18)} & 1 \\
\end{array}
\right)$
 &  $\left\{\frac{42 (176-\boldsymbol{\mu})}{37 (\boldsymbol{\mu} -18)},1\right\}$
 &  $\left\{ \left\{\frac{14 (\boldsymbol{\mu} -102)}{51 (\boldsymbol{\mu} -106)},1\right\} , \{0,1\} \right\}$\\
\bottomrule
\end{tabular}

} 
} 

\vspace{.1cm}
\begin{align*}
a_2 & = 122787543-123 922 \boldsymbol{\mu}, &
b_2 & = \sqrt{320 140 324 \boldsymbol{\mu}^2-60 787 796 412 \boldsymbol{\mu}+2 893 236 225 489}, &
c_2 & = 617 217-5618\boldsymbol{\mu}, \\
a_4 & = 142 050 954-75 491 \boldsymbol{\mu}, &
b_4 & = \sqrt{2 615 265 201 481 \boldsymbol{\mu}^2-504 216 904 560 828 \boldsymbol{\mu}+24 312 026 567 983 716}, &
c_4 & = 35 876 274-339 871\boldsymbol{\mu}, \\
a_5 & = 353 512 794 + 104 449 \boldsymbol{\mu}, &
b_5 & = \sqrt{3 386 009 761 \boldsymbol{\mu}^2-644 714 033 868 \boldsymbol{\mu}+30 745 583 285 796}, &
c_5 & = 2 181 906-20 579 \boldsymbol{\mu}.
\end{align*}

\vspace{.1cm}
\caption{\footnotesize{Cycles in $\Pi_{\gamma_5}$: defining equations of $\Pi_{\mathcal{H}}\subset\Pi_{\gamma_5}$, the matrix of $\pi_\mathcal{H}$, and their eigenvalues and eigenvectors, for each $\mathcal{H}\in\{\mathcal{H}_2, \mathcal{H}_4, \mathcal{H}_5, \mathcal{H}_6\}$ in $\Pi_{\gamma_5}$.}}
\label{branch:table_cycles_gamma_5}
\end{table}

\begin{table}[h]

\makebox[1 \textwidth][c]{       
\resizebox{1.8 \textwidth}{!}{

\begin{tabular}{|c|c|c|c|c|}

\toprule
$\quad\mathcal{H}\quad$ & Defining equations of $\Pi_{\mathcal{H}}\subset\Pi_{\gamma_8}$  &   $M_{\mathcal{H}}$, the matrix of $\pi_\mathcal{H}$ &  Eigenvalues of $M_{\mathcal{H}}$ &  Eigenvectors of $M_{\mathcal{H}}$ \vspace{.1cm} \\
\toprule \\[-3mm]

$\mathcal{H}_1$ & $4335 u_1-11 (\boldsymbol{\mu}-9) u_5 < 0$ &    
$\left(
\begin{array}{cc}
 -\frac{93 (\boldsymbol{\mu} -167)}{65 (\boldsymbol{\mu} -9)} & 0 \\[2mm]
 \frac{64464}{715 (\boldsymbol{\mu} -9)} & 1 \\
\end{array}
\right)$
 &  $\left\{\frac{93 (167-\boldsymbol{\mu})}{65 (\boldsymbol{\mu} -9)},1\right\}$
 &  $\left\{ \left\{\frac{11 (102-\boldsymbol{\mu})}{408},1\right\} , \{0,1\} \right\}$
\\ \\[-3mm]
\midrule \\[-3mm]

$\mathcal{H}_2$ & $\left\{
\begin{array}{l}
4335 u_1-11 (\boldsymbol{\mu}-9) u_5 > 0 \\[2mm]
348435 u_1-1871 (\boldsymbol{\mu}-9) u_5 < 0
\end{array}\right.$  &    
$\left(
\begin{array}{cc}
 -\frac{924093 (\boldsymbol{\mu} -167)}{598000 (\boldsymbol{\mu} -9)} & \frac{869 (\boldsymbol{\mu} -167)}{2990000} \\[2mm]
 -\frac{15991101}{598000 (\boldsymbol{\mu} -9)} & \frac{3876933}{2990000} \\
\end{array}
\right)$
 & $\left\{ \frac{3(a_2-79 b_2)}{2990000(\boldsymbol{\mu}-9)}, \frac{3(a_2+79 b_2)}{2990000(\boldsymbol{\mu}-9)} \right\}$
 & $\left\{ \left\{ \frac{c_2-b_2}{337 365}, 1 \right\} , \left\{ \frac{c_2+b_2}{337 365}, 1 \right\} \right\}$
\\ \\[-3mm]
\midrule \\[-3mm]

$\mathcal{H}_4$ & $\left\{
\begin{array}{l}
93 u_1- (\boldsymbol{\mu}-9) u_5 < 0 \\[2mm]
348435 u_1-1871 (\boldsymbol{\mu}-9) u_5 > 0
\end{array}\right.$  &    
$\left(
\begin{array}{cc}
 -\frac{9 (56269 \boldsymbol{\mu} -9931190)}{381248 (\boldsymbol{\mu} -9)} & \frac{149290-1667 \boldsymbol{\mu} }{1906240} \\[2mm]
 -\frac{17901}{1702 (\boldsymbol{\mu} -9)} & \frac{10293}{8510} \\
\end{array}
\right)$
 & $\left\{ \frac{3(a_4-b_4)}{3812480(\boldsymbol{\mu}-9)}, \frac{3(a_4+b_4)}{3812480(\boldsymbol{\mu}-9)} \right\}$
 & $\left\{ \left\{ \frac{c_4-b_4}{13 366 080}, 1 \right\} , \left\{ \frac{c_4+b_4}{13 366 080}, 1 \right\} \right\}$
\\ \\[-3mm]
\midrule \\[-3mm]

$\mathcal{H}_5$ & $\left\{
\begin{array}{l}
93 u_1- (\boldsymbol{\mu}-9) u_5 > 0 \\[2mm]
124899 u_1+(174789-10382\boldsymbol{\mu}) u_5 < 0
\end{array}\right.$  &
$\left(
\begin{array}{cc}
 -\frac{3 (23883 \boldsymbol{\mu} -4222210)}{59570 (\boldsymbol{\mu} -18)} & -\frac{79 \left(52 \boldsymbol{\mu} ^2-10969 \boldsymbol{\mu} +612630\right)}{1846670 (\boldsymbol{\mu} -18)} \\[2mm]
 -\frac{64464}{4255 (\boldsymbol{\mu} -18)} & \frac{16 (10382 \boldsymbol{\mu} -174789)}{131905 (\boldsymbol{\mu} -18)} \\
\end{array}
\right)$
 & $\left\{ \frac{a_5-79b_5}{3 693 340(\boldsymbol{\mu}-18)}, \frac{a_5+79b_5}{3693340(\boldsymbol{\mu}-18)} \right\}$
 & $\left\{ \left\{ \frac{c_5+b_5}{708 288}, 1 \right\}, \left\{ \frac{c_5-b_5}{708 288}, 1 \right\} \right\}$\\
\bottomrule
\end{tabular}

} 
} 

\vspace{.1cm}
\begin{align*}
a_2 & = 122787543-123 922 \boldsymbol{\mu}, &
b_2 & = \sqrt{320 140 324 \boldsymbol{\mu}^2-60 787 796 412 \boldsymbol{\mu}+2 893 236 225 489}, &
c_2 & = 17 927\boldsymbol{\mu}-1 701 498, \\
a_4 & = 142 050 954-75 491 \boldsymbol{\mu}, &
b_4 & = \sqrt{2 615 265 201 481 \boldsymbol{\mu}^2-504 216 904 560 828 \boldsymbol{\mu}+24 312 026 567 983 716}, &
c_4 & = 1 612 579 \boldsymbol{\mu} - 155 884 746, \\
a_5 & = 353 512 794 + 104 449 \boldsymbol{\mu}, &
b_5 & = \sqrt{3 386 009 761 \boldsymbol{\mu}^2-644 714 033 868 \boldsymbol{\mu}+30 745 583 285 796}, &
c_5 & = 57 553 \boldsymbol{\mu}-5 466 054.
\end{align*}

\vspace{.1cm}
\caption{\footnotesize{Cycles in $\Pi_{\gamma_8}$: defining equations of $\Pi_{\mathcal{H}}\subset\Pi_{\gamma_8}$, the matrix of $\pi_\mathcal{H}$, and their eigenvalues and eigenvectors, for each $\mathcal{H}\in\{\mathcal{H}_1, \mathcal{H}_2, \mathcal{H}_4, \mathcal{H}_5\}$ in $\Pi_{\gamma_8}$.}}
\label{branch:table_cycles_gamma_8}
\end{table}
\end{landscape}

\newpage
\section{Notation}
\label{AppendixB}
We list the main notation for constants and auxiliary functions used in this paper in order of appearance with the reference of the section containing a definition.

\begin{tiny}
\begin{table}[h]
\begin{center}
\begin{tabular}{|c|l|c|}
\toprule
{Notation}  & \qquad  Definition/meaning \qquad  \qquad &\quad  Section \qquad \\[1mm]
\toprule \\[-3mm]

\vspace{-1mm}
&&\\
$\mathcal{V}, \mathcal{E}$ & Set of equilibria (vertices), set of edges of the cube    & \S \ref{sec:model} \\  &&\\[-1mm]
\midrule

\vspace{-1mm}
&&\\
$\mathcal{F}_v$ &  $\text{Set of three faces $\sigma_j$ defined by  the component $x_j=0$ at $v\in \mathcal{V}$}$    & \S \ref{sec:model} \\  &&\\[-1mm]
\midrule

\vspace{-1mm}
&&\\
$\mathcal{F}$ &  Set of all faces of the cube  & \S \ref{sec:model}  \\  &&\\[-1mm]
\midrule

\vspace{-1mm}
&&\\
$\mathcal{I}, \mathcal{I}_1, \mathcal{I}_2, \mathcal{I}_3 $  &$\mathcal{I}_1=\left[ \frac{850}{11}, \boldsymbol{\mu}_1 \right[,
\mathcal{I}_2=\left] \boldsymbol{\mu}_1, \boldsymbol{\mu}_2 \right[,
\mathcal{I}_3=\left] \boldsymbol{\mu}_2, \frac{544}{5} \right],
$
$\boldsymbol{\mu}_1 = 102$ and $\boldsymbol{\mu}_2\approx 105.04$.  & \S \ref{ss:interior}  \\  &&\\[-1mm]
\midrule

\vspace{-1mm}
&&\\
$\mathcal{H}, \mathcal{H}_i$  &  Heteroclinic network, heteroclinic cycle  & \S \ref{ss:heteroclinic}  \\  &&\\[-1mm]
\midrule

\vspace{-1mm}
&&\\
$N_v$ & Cubic neighbourhood of $v\in \mathcal{V}$ where the flow may be $C^1$--linearized   & \S \ref{ss: global map} \\  &&\\[-1mm]
\midrule

\vspace{-1mm}
&&\\
$N_\gamma$ & Tubular neighbourhood of $\gamma\in \mathcal{E}$    & \S   \ref{ss: global map}  \\  &&\\[-1mm]
\midrule

\vspace{-1mm}
&&\\
$\Psi_\varepsilon$ & Quasi-change of coordinates ($\varepsilon$: blow-up parameter)    & \S \ref{subsec:dual_cone} \\  &&\\[-1mm]
\midrule

\vspace{-1mm}
&&\\
$\Pi_v, \Pi_\gamma$ &  $\Psi_\varepsilon (N_v), \quad \Psi_\varepsilon (N_v)=\Pi_{v^*}\cap \Pi_{v}$   &  \S \ref{subsec:dual_cone} \\  &&\\[-1mm]
\midrule

\vspace{-1mm}
&&\\
$\chi^v, \chi^v_j$ & Character vector field at $v$; $j$-component $\sigma$ of $\chi^v$ where $j\in \{1, ..., 6\}$  & \S  \ref{ss:skeleton}  \\  &&\\[-1mm]
\midrule

\vspace{-1mm}
&&\\
$D_{\gamma, \gamma'},D_{\gamma, \gamma'}^*$ & Set of points in $\Sigma_\gamma$ that follows the admissible path $\{\gamma, \gamma'\}$\, , \, $\Psi_\varepsilon(D_{\gamma, \gamma'})$   & \S \ref{ss:5.5}  \\  &&\\[-1mm]
\midrule

\vspace{-1mm}
&&\\
$P_{\gamma, \gamma'}$ & Map carrying points from $D_{\gamma, \gamma'}$ to $Out(v)\cap \gamma'$  & \S  \ref{ss:5.5} \  \\  &&\\[-1mm]
\midrule

\vspace{-1mm}
&&\\
$F_{\gamma, \gamma'}$ & $\Psi_\varepsilon \circ P_{\gamma, \gamma'} \circ (\Psi_\varepsilon )^{-1}|_{D^*_{\gamma, \gamma'}}$  & \S  \ref{ss:5.5}    \\  &&\\[-1mm]
\midrule

\vspace{-1mm}
&&\\
$\Pi_{\gamma, \gamma'}$ & $\left\{  y \in \inter(\Pi_\gamma): y_\sigma >\frac{\chi_\sigma^v}{\chi_{\sigma^*}^{v}}y_{\sigma_*}, \quad \forall \sigma\in \mathcal{F}_\sigma, \quad \sigma\neq \sigma_*  \right\}$   & \S  \ref{ss:5.5}    \\  &&\\[-1mm]
\midrule

\vspace{-1mm}
&&\\
$L_{\gamma, \gamma'}$ & Induced linear map from $\Pi_{\gamma, \gamma'}$ to $\Pi_{\gamma'}$  & \S  \ref{ss:5.5}    \\  &&\\[-1mm]
\midrule

\vspace{-1mm}
&&\\
$\Pi_\xi$  &  $\inter(\Pi_{\gamma_0}) \cap \bigcap_{j=1}^m \left( L_{\gamma_{j-1}, \gamma_j } \circ ...\circ L_{\gamma_0, \gamma_1} \right)^{-1} \left( \inter \left( \Pi_{\gamma_j}\right) \right) \subset (\RR_0^+)^6$  & \S \ref{ss:5.6}  \\  &&\\[-1mm]
\midrule

\vspace{-1mm}
&&\\
$\pi_\xi$  &  Linear map $L_{\gamma_{m-1}, \gamma_m}\circ ... \circ L_{\gamma_{0}, \gamma_1}$  & \S \ref{ss:5.6}    \\  &&\\[-1mm]
\midrule

\vspace{-1mm}
&&\\
$\pi_\mathcal{S}$  &  Skeleton map defined in any sector of $\Pi_\mathcal{S}$ ($\mathcal{S}$: structural set) & \S \ref{ss:5.6}    \\  &&\\[-1mm]
\midrule

\vspace{-1mm}
&&\\
$\Delta_\xi$  &   $\{\, u=(u_1, ..., u_6)\in {\rm int}(\skDomPoin{\chi}{\xi})\,\colon \, \overline{u} = 1 \,\}$ where $ \overline{u}=\sum_{i=1}^6 u_j$& \S \ref{ss: paths}   \\  &&\\[-1mm]
\midrule

\vspace{-1mm}
&&\\
$\hat{\pi}_\xi$  &   Projective map along the $S$-branch $\xi$ & \S \ref{sec:projective_map}    \\  &&\\[-1mm]
\bottomrule

\end{tabular}
\end{center}
\label{notationB}
\bigskip
\caption{\small Notation.}
\end{table}
\end{tiny}

\end{document}